\documentclass[oneside, 11pt]{amsart}
\title[] {On Newton-Okounkov bodies of Graded Linear Series}
\author{Georg Merz} 
\usepackage{xcolor}

\usepackage{enumerate}
\usepackage{amsmath}
\usepackage{mathrsfs}
\usepackage{amssymb}
\usepackage{amsthm}
\usepackage[all]{xy}
\usepackage{tikz}
\usepackage{mathtools}  
\mathtoolsset{showonlyrefs}
\usepackage{bookmark}

\keywords{Newton-Okounkov body, volume,  graded linear series, base locus}
\date{\today}

\address{Georg Merz,
Mathematisches Institut,
Georg-August-Universit\"at G\"ottingen,
Bunsenstra\ss e 3-5, 
D-37073 G\"ottingen,
Germany}
\email{georg.merz@mathematik.uni-goettingen.de}



\DeclareMathOperator{\Sb}{S_\bullet}
\DeclareMathOperator{\spec}{Spec}
\DeclareMathOperator{\proj}{Proj}
\DeclareMathOperator{\ind}{ind}

\DeclareMathOperator{\vol}{vol}

\newcommand{\f}{\mathcal{F}}

\newcommand{\bs}{\mathbf{B}}
\newcommand{\bsp}{\mathbf{B_{+}}}

\newcommand{\bsi}{\mathfrak{b}}

\DeclareMathOperator{\conv}{conv}
\DeclareMathOperator{\fract}{Quot}

\DeclareMathOperator{\codim}{codim}
\DeclareMathOperator{\bl}{Bs}

\newcommand{\p}{\mathfrak{p}}

\newcommand{\struc}{\mathcal{O}}

  \theoremstyle{plain}
  \newtheorem{thm}{Theorem}[section]
  \newtheorem*{quest}{Question}
  \newtheorem{lem}[thm]{Lemma}
  \newtheorem{cor}[thm]{Corollary}
  \newtheorem{prop}[thm]{Proposition}
  \theoremstyle{definition}
  \newtheorem{defi}[thm]{Definition}
  \newtheorem{ex}[thm] {Example}
\theoremstyle{theorem}
\newtheorem{rem}[thm]{Remark}

\newtheorem{thmx}{Theorem}

\begin{document}
\begin{abstract}

We  generalize the theory of Newton-Okounkov bodies of big divisors to the case of graded linear series. One of the results is the generalization of slice formulas and the existence of generic Newton-Okounkov bodies for birational graded linear series. We  also give a characterization of graded linear series which have full volume in terms of their base locus.
\end{abstract}
\maketitle

\section{Introduction}
The beautiful paper \cite{Ok96}  by Andrei Okounkov began the theory of Newton-Okounkov bodies. Originally, he was interested in asymptotic multiplicities of group representations on line bundles. To study this, he constructed a compact convex set $\Delta$ and noticed that the volume of this body can be interpreted as the asymptotic multiplicity of the given representation.  
Inspired by that work,  Kaveh and Khovanskii in \cite{KK12} and independently Lazarsfeld and Musta{\c{t}}{\u{a}} in \cite{LM09} realized that Okounkov's construction gave rise to a completely new approach for studying the asymptotics of linear series on a projective variety. They  used Okounkov's construction to associate a convex body $\Delta(S_\bullet)$ to a graded linear series $S_\bullet$ on a projective variety $X$.  
The construction of this body does not only depend on $S_\bullet$, but also on the choice  of a flag $Y_\bullet$ consisting of closed irreducible subvarieties.
 However, the main feature of the convex body $\Delta_{Y_\bullet}(S_\bullet)$ is that, under ``mild'' conditions, its euclidean volume gives a geometric interpretation of the classical notion of the volume of a graded linear series $S_\bullet$ \cite[Theorem A]{LM09}. Indeed, for a graded linear series $S_\bullet$ on a variety $X$ of dimension $d$ one has
\begin{align}
\vol ( \Delta_{Y_\bullet}(S_\bullet))= \lim_{k\to \infty} \frac{\dim S_k}{k^n}= d!\cdot \vol(S_\bullet).
\end{align}
 A posteriori, one concludes that the volume of $\Delta_{Y_\bullet}(S_\bullet)$ is independent of the choice of the flag $Y_\bullet$.   \\

 For a complete graded linear series $S_\bullet$   equal to the full section algebra  \[R_{\bullet}(X,D):=\bigoplus_{k\in\mathbb{N}} H^0(X,\mathcal{O}_X(kD))\] associated to a big divisor $D$, we get a very nice correspondence between algebraic geometry and convex geometry.
 Although,  a priori, the Newton-Okounkov body of a divisor $D$  is well-defined up to linear equivalence of $D$, Lazarsfeld and Musta{\c{t}}{\u{a}} \cite[Proposition 4.1]{LM09} showed that the construction of the Newton-Okounkov body does only depend on the numerical equivalence class. Conversely, Jow showed in \cite{jow}  that two divisors $D$ and $D^\prime$ are numerical equivalent if the Newton-Okounkov bodies $\Delta_{Y_\bullet}(D)$ and $\Delta_{Y_\bullet}(D^\prime)$ coincide for all  flags $Y_\bullet$. Philosophically, this means that we can interpret a numerical equivalence class  of a divisor $D$ as a collection of real convex bodies parametrized by the set of all flags. Therefore, in principle, it should be possible to translate all numerical properties of a divisor into properties of convex geometry and vice versa. An outline of this approach is given in recent works of K\"uronya and Lozovanu (see \cite{KL14},\cite{KL15} and \cite{KurLoz15}).
 
Even though Lazarsfeld and Musta{\c{t}}{\u{a}} tried to build up their theory of Newton-Okounkov bodies in a general setting based on graded linear series, many statements  are only formulated for complete graded linear series corresponding to a big divisor $D$, i.e. for $R(S_\bullet)=R_{\bullet}(X,D)$.
The reason why this case is significantly easier to understand is due to the following facts which are not shared by arbitrary graded linear series.
\begin{itemize} 
\item The algebra $R_{\bullet}(X,D)$ is induced by the locally free sheaves $\struc_X(kD)$.
\item The body $\Delta_{Y_\bullet}(D)$ is well defined for a numerical equivalence class of $D$ which can be interpreted as a point in a finite dimensional vector space over $\mathbb{R}$, the N\'eron-Severi space.
\item There exists a global Newton-Okounkov body which characterizes all Newton-Okounkov bodies $\Delta_{Y_\bullet}(D)$ at once \cite[Theorem B]{LM09}.
\end{itemize}
 The two main features of Newton-Okounkov bodies which were proved  in the case $S_\bullet=R_{\bullet}(X,D)$ but were left open for arbitrary graded linear series are the following .
 \begin{enumerate}
 \item \textit{Slice formulas for Newton-Okounkov bodies}: Let $t\geq 0$ be a rational number. Let $Y_\bullet$ be a flag such that $Y_1$ is a Cartier divisor. Suppose $D$ is a big Cartier divisor such that $Y_1\not\subseteq \bsp(D)$ and $D-t\cdot Y_1$ is big. Then the $t$-slice  \[\Delta_{Y_\bullet}(D)_{\nu_1=t}:=\Delta_{Y_\bullet}(D)\cap \left(\{t\}\times \mathbb{R}^{d-1}\right) \] is equal to the Newton-Okounkov body of the restricted linear series \cite[Theorem 4.24]{LM09} \[R_{\bullet}(X,D-tY_1)_{|Y_1}.\] 
 \item\label{b} \textit{Existence of a generic Newton-Okounkov body}:
 If we have a family of Newton-Okounkov bodies $\Delta_{Y_{t,\bullet}}(X_t,D_t)$ where all the relevant data move in  flat families over $T$, then for a very general choice of $t\in T$ the Newton-Okounkov bodies all coincide \cite[Theorem 5.1]{LM09}. 
 \end{enumerate}
 \begin{quest}
 Is there a natural generalization of the above properties for more general graded linear series $S_\bullet$?
 \end{quest}
 We will prove that property $(a)$ for rational $t>0$ does  hold for a completely  arbitrary graded linear series $S_\bullet$ corresponding to a big divisor.
 \begin{thmx}
 Let $S_\bullet$ be a graded linear series. Let $Y_\bullet$ be an admissible flag and $t=a/b>0$ for $(a,b)=1$ a rational number such that $\{t\}\times \mathbb{R}^{d-1}$ meets the interior of $\Delta_{Y_\bullet}(S_\bullet)$. Then 
\begin{align}
\Delta_{Y_\bullet}(S_\bullet)_{\nu_1=t}=1/b\cdot \Delta_{X|Y_1}(S^{(b)}_\bullet -a Y_1)
\end{align}
via the identification of $\{t\}\times \mathbb{R}^{d-1}\cong \mathbb{R}^{d-1}$.
 \end{thmx}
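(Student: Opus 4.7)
The plan is to mimic the argument of \cite[Theorem 4.24]{LM09} and adapt it from the complete to the arbitrary case. I would begin by fixing a local equation $f_1$ for $Y_1$ at its generic point and establishing the basic algebraic dictionary: sections $s\in S_{bk}$ with $\nu_1(s)=ak$ are in one-to-one correspondence (modulo sections with $\nu_1\geq ak+1$) with sections of degree $k$ in the restricted linear series $(S^{(b)}_\bullet-aY_1)_{|Y_1}$, via $s\mapsto (s/f_1^{ak})_{|Y_1}$. Under this dictionary the residual valuation $(\nu_2,\ldots,\nu_d)$ on $X$ matches the Okounkov valuation $\nu'$ of the truncated flag $Y_2\supset\cdots\supset Y_d$ on $Y_1$. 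This reduces the statement to a comparison between the slice subsemigroup $\Gamma_t\subset\Gamma(S_\bullet)$ of valuations $\nu(s)$ with $s\in S_{bm}$ and $\nu_1(s)=am$, and the Okounkov semigroup of $(S^{(b)}_\bullet-aY_1)$ on $Y_1$.

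The inclusion $\supseteq$ then follows directly from the dictionary. For $\bar p=\lim_i \nu'(\bar s_i)/m_i\in\Delta_{X|Y_1}(S^{(b)}_\bullet-aY_1)$ lift each $\bar s_i$ to some $s_i\in S_{bm_i}$ with $\nu_1(s_i)=am_i$ and observe that
\[
\frac{\nu(s_i)}{bm_i}=\left(\frac{a}{b},\frac{\nu'(\bar s_i)}{bm_i}\right)\longrightarrow\left(t,\frac{\bar p}{b}\right),
\]
so $(t,\bar p/b)\in\Delta_{Y_\bullet}(S_\bullet)$ and hence $\bar p/b$ lies in the $t$-slice.

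The reverse inclusion $\subseteq$ is the main difficulty. Given $(t,\bar q)\in\Delta_{Y_\bullet}(S_\bullet)_{\nu_1=t}$, a generic approximating sequence $\nu(s_i')/k_i'\to (t,\bar q)$ need not satisfy $b\mid k_i'$ or $\nu_1(s_i')/k_i'=t$ exactly, and so cannot be used directly to manufacture elements of $\Gamma_t$. The plan is to correct this defect by exploiting the convex structure of $\Gamma(S_\bullet)$: the hypothesis that $t$ lies in the interior of the first-coordinate projection of $\Delta_{Y_\bullet}(S_\bullet)$ supplies elements $\gamma^\pm\in\Gamma(S_\bullet)$ with $\gamma_1^\pm/\deg\gamma^\pm$ strictly on either side of $t$. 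Combining these with an approximating element via additive combinations in $\Gamma$, and using $\gcd(a,b)=1$ to solve the two-variable Diophantine constraint forcing both the first coordinate to equal $a\cdot(\deg/b)$ and the degree to be divisible by $b$, one can extract a sequence in $\Gamma_t$ whose normalized images converge to $(t,\bar q)$. Reading the result back through the dictionary produces $b\bar q\in\Delta_{X|Y_1}(S^{(b)}_\bullet-aY_1)$, which is the required inclusion.

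The main obstacle is precisely this convex-Diophantine adjustment. For the complete linear series case the analogous comparison of semigroups is trivialised by the restriction short exact sequence and by the fact that $R_\bullet(X,D)$ is large enough to contain high-order vanishing in each degree; for arbitrary $S_\bullet$ one must argue purely from the raw semigroup $\Gamma(S_\bullet)$, using the coprimality of $a,b$ and the interior hypothesis on $t$ simultaneously. Ensuring that the adjustment on the first coordinate does not destroy the approximation of the later coordinates $(\nu_2,\ldots,\nu_d)/\deg$ of the target point $\bar q$ is the delicate technical point, and will be the bulk of the argument.
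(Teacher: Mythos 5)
Your setup (the dictionary between sections $s\in S_{bk}$ with $\nu_1(s)=ak$ and degree-$k$ sections of $(S^{(b)}_\bullet-aY_1)_{|Y_1}$, and the easy inclusion $\supseteq$) matches the paper. But for the hard inclusion $\subseteq$ you have only named the difficulty and sketched a strategy; you explicitly defer ``the bulk of the argument'' --- the convex--Diophantine correction that keeps the later coordinates under control while fixing the first coordinate and the degree --- to future work. As written, this is a plan, not a proof, and that deferred step is exactly where the content lies.

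The missing ingredient is the paper's Theorem \ref{thmvalrat} (built on Lemma \ref{lemval}): \emph{every rational point in the relative interior of $\Delta_{Y_\bullet}(S_\bullet)$ is exactly valuative}, i.e.\ equal to $\nu_{Y_\bullet}(s)/k$ for some $s\in S_k$, not merely a limit of such points. Granting this, your entire approximation-and-correction scheme evaporates. Since the hyperplane $\{t\}\times\mathbb{R}^{d-1}$ meets the interior of the body, the relative interior of the slice lies in the interior of the body, so any rational point $(t,\bar q)$ there equals $\nu_{Y_\bullet}(s)/k$ on the nose; then $\nu_1(s)=ak/b\in\mathbb{Z}$ together with $(a,b)=1$ forces $b\mid k$, and your dictionary immediately produces the corresponding valuative point of $\Delta_{X|Y_1}(S^{(b)}_\bullet-aY_1)$. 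Density of interior rational points in both closed convex bodies finishes the argument. (The paper first reduces to integer $t$ via the Veronese $S^{(b)}_\bullet$, which is where the coprimality bookkeeping is absorbed.) Your proposed route --- generating the needed exact valuations by additive combinations in $\Gamma(S_\bullet)$ using elements on either side of $t$ --- is essentially an attempt to re-derive this valuative-density statement in the special case at hand; it could be made to work, but you would in effect be proving Lemma \ref{lemval} and Theorem \ref{thmvalrat}, and you have not done so. I recommend you isolate and prove that statement first, after which the theorem is a two-line consequence.
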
 
 However, for $t=0$ we need some more constraints on  $S_\bullet$. It turns out that a natural assumption in order to treat $S_\bullet$ 
``like'' a divisor $D$ is to assume that their volumes are equal, i.e. $\vol(S_\bullet)=\vol(D)$. Another assumption which is necessary to prevent $S_\bullet$ from being too wild is that it should be finitely generated as an algebra. 
 We will prove the following characterization of such graded linear series.
\begin{thmx}
Let $S_\bullet\subseteq R(X,D)$ be  finitely generated graded linear series corresponding to a big divisor $D$.  Then the following two conditions are equivalent
\begin{enumerate}
\item $\vol(S_\bullet)=\vol(D)$.
\item 
\begin{itemize}
\item The rational map $h_{S_\bullet}\colon X \dashrightarrow \proj (S_\bullet) $ is birational and
\item $\bs(S_\bullet)=\emptyset $ on $\proj(R(X,D))$.
\end{itemize}
\end{enumerate}
\end{thmx}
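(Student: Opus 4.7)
The equivalence is proved in two directions after a preliminary Veronese reduction.

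\emph{Setup and direction $(2)\Rightarrow(1)$.} Replacing $S_\bullet$ by a sufficiently divisible Veronese subalgebra, we may assume $S_\bullet$ is generated in degree one by $V := S_1 \subseteq R_1(X,D) = H^0(Z,\struc_Z(1))$, where $Z := \proj(R(X,D))$; set $Y := \proj(S_\bullet)$, carrying the ample line bundle $\struc_Y(1)$. The subspace $V$ determines a rational map $\phi\colon Z\dashrightarrow Y$ whose base locus on $Z$ is precisely $\bs(S_\bullet)$. Assuming (2), $\phi$ extends to a morphism, the surjection $V\otimes\struc_Z\twoheadrightarrow\struc_Z(1)$ gives $\phi^{*}\struc_Y(1) = \struc_Z(1)$, and birationality of $h_{S_\bullet}$ makes $\phi$ a birational morphism; Zariski's main theorem (after normalization) yields $\phi_*\struc_Z = \struc_Y$, so
\begin{align}
S_k = H^0(Y,\struc_Y(k)) = H^0(Z,\struc_Z(k)) = R_k(X,D) \quad (k\gg 0),
\end{align}
which gives $\vol(S_\bullet) = \vol(D)$.

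\emph{Direction $(1)\Rightarrow(2)$, birationality of $h_{S_\bullet}$.} Assume $\vol(S_\bullet)=\vol(D)$ and fix a resolution $\pi\colon\tilde X\to X$ on which $h_{S_\bullet}$ becomes a morphism $\tilde h\colon\tilde X\to Y$ of generic degree $e\ge 1$. The inclusion $S_k\subseteq R_k(X,D)$ induces an injection $\tilde h^{*}\struc_Y(1)\hookrightarrow\pi^{*}\struc_X(D)$, so by monotonicity of volumes
\begin{align}
e\cdot\vol(S_\bullet) = \vol(\tilde h^{*}\struc_Y(1)) \le \vol(\pi^{*}D) = \vol(D) = \vol(S_\bullet),
\end{align}
forcing $e=1$; in particular $\dim Y = d$ and $h_{S_\bullet}$ is birational.

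\emph{Direction $(1)\Rightarrow(2)$, emptiness of the base locus.} Suppose for contradiction $\bs(S_\bullet)\ne\emptyset$ on $Z$ and let $\pi\colon\tilde Z\to Z$ principalize the base ideal of $V$, so that $\pi^{*}\struc_Z(1) = M+F$ with $M = q^{*}\struc_Y(1)$ base-point free (where $q\colon\tilde Z\to Y$ is the induced birational morphism by the previous step) and $F$ a nonzero effective $\pi$-exceptional divisor supported over $\bs(S_\bullet)$. Then $\vol(S_\bullet) = M^d$ and $\vol(D) = (M+F)^d$, so the task reduces to the strict inequality $(M+F)^d > M^d$. This amounts to the classical projective-geometric statement that a birational linear projection from a center meeting the embedded variety strictly decreases the degree of the image: via the embedding $Z\hookrightarrow\mathbb{P}(R_1(X,D))^{\vee}$ determined by the ample $\struc_Z(1)$, the map to $Y$ is projection from the linear subspace $\bigcap_{v\in V}\{v=0\}$, whose intersection with $Z$ is $\bs(S_\bullet)$; expanding $(M+F)^d - M^d = \sum_{i=1}^d \binom{d}{i} M^{d-i} F^i$ and computing pushforwards via $\pi$ using the blow-up intersection calculus yields a strictly positive contribution from the base scheme, contradicting $\vol(S_\bullet)=\vol(D)$. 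The key obstacle is precisely this strict positivity: general nef-times-effective positivity only yields $(M+F)^d\ge M^d$, and one must exploit both the ampleness of $\struc_Z(1)$ on $Z$ and the finite generation of $S_\bullet$—which renders $F$ the exceptional divisor of a nontrivial blow-up of a genuine subscheme of a projective variety rather than a numerically negligible effective class—to produce the required strict gap.
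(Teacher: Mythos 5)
Your argument for $(2)\Rightarrow(1)$ and for the birationality half of $(1)\Rightarrow(2)$ is essentially sound and close in spirit to the paper's: both reduce to the fact that a finite birational (degree-one) morphism between the relevant $\proj$'s preserves the top self-intersection of $\struc(1)$ (the paper's Lemma \ref{lemfinbir} combined with the globally defined finite morphism $j\colon \proj(R(X,D))\to\proj(S_\bullet)$). One caveat: your appeal to Zariski's main theorem to get $S_k=H^0(Z,\struc_Z(k))$ needs $Y=\proj(S_\bullet)$ normal, which is not given; but this stronger conclusion is not needed, since $\vol(S_\bullet)=\vol(\struc_Y(1))=\deg(\phi)^{-1}\vol(\struc_Z(1))=\vol(D)$ already follows from $\phi$ being finite of degree one.

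The genuine gap is the remaining half of $(1)\Rightarrow(2)$: that $\vol(S_\bullet)=\vol(D)$ forces $\bs(S_\bullet)=\emptyset$ on $Z$. You reduce this to the strict inequality $(M+F)^d>M^d$ and then acknowledge you cannot prove it; this is precisely the nontrivial content of the theorem, and the ``blow-up intersection calculus'' you invoke does not deliver it. Indeed, expanding gives $(M+F)^d-M^d=\sum_{i}(M+F)^iM^{d-1-i}\cdot F$ with each term only $\geq 0$; the top term $(\pi^*H)^{d-1}\cdot F$ vanishes as soon as the base locus has codimension $\geq 2$, and $M^{d-1}\cdot F=\struc_Y(1)^{d-1}\cdot q_*F$ vanishes whenever $q$ contracts $F$, so no single term is forced to be positive and one would have to control the deeper Segre-class contributions of an arbitrary (non-reduced, singular) base scheme. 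The paper avoids this entirely and proves the step via Newton--Okounkov bodies (Theorem \ref{thmvolbase2}, resting on Theorem \ref{gradedfiniteKL} and Lemma \ref{lemvolok}): finite generation of $R(S_\bullet)$ by elements of degree $\leq N$ all vanishing at a base point $x$ forces $\operatorname{ord}_x(s)\geq\lceil k/N\rceil$ for every $s\in S_k$, which by the inequality $\operatorname{ord}_x(s)\leq\sum_i\nu_i(s)$ of \cite{KL15} excludes the origin from $\Delta_{Y_\bullet}(S_\bullet)$ for any flag centered at $x$; on the other hand, equality of volumes forces equality of the Okounkov bodies of $S_\bullet$ and of the complete series (via the semigroup-index argument of Lemma \ref{lemvolok}), and the latter body contains the origin when $x$ is not a base point of $\struc_Z(1)$. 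This is where finite generation enters quantitatively, and it is the missing mechanism in your proposal.
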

The above result enables us  to derive a slice formula for $t=0$ for such graded linear series (see Theorem \ref{thmsliceful}). However, we are even able to derive the following slice theorem for graded linear series containing an ample series.
\begin{thmx}
Let $S_\bullet$ be a graded linear series containing the ample series $D-E$. Let $Y_\bullet$ be an admissible flag such that the divisorial component $Y_1$ is not contained in $E$ and $Y_d\not \in \bs(S_\bullet)$. Then we have
\begin{align}
\Delta_{Y_\bullet}(S_\bullet)_{\nu_1=0}=\Delta_{X|Y_1}(S_\bullet).
\end{align}
\end{thmx}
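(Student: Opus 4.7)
My plan is to derive the result from the positive-rational slice formula (the first theorem of this paper) via a limiting argument as $t \to 0^+$. First I would verify that under the given hypotheses the hyperplane $\{0\} \times \R^{d-1}$ meets the interior of $\Delta_{Y_\bullet}(S_\bullet)$: containing the ample series $D-E$ gives an inclusion $\Delta_{Y_\bullet}(S_\bullet) \supseteq \Delta_{Y_\bullet}(D-E)$, and since $D-E$ is ample and $Y_1 \not\subset E$, its Newton--Okounkov body already reaches into the $\nu_1 = 0$ face with non-empty relative interior. Granted this non-degeneracy, convexity of $\Delta_{Y_\bullet}(S_\bullet)$ makes the slice function $t \mapsto \Delta_{Y_\bullet}(S_\bullet)_{\nu_1 = t}$ continuous in Hausdorff distance at $t = 0^+$, so
\begin{equation}
\Delta_{Y_\bullet}(S_\bullet)_{\nu_1 = 0} \;=\; \lim_{t \to 0^+} \Delta_{Y_\bullet}(S_\bullet)_{\nu_1 = t}.
\end{equation}

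Next I apply the positive-$t$ slice formula along a sequence of coprime rationals $a_n/b_n \to 0^+$ (chosen so that the hypotheses of that theorem are satisfied), obtaining
\begin{equation}
\Delta_{Y_\bullet}(S_\bullet)_{\nu_1 = a_n/b_n} \;=\; \tfrac{1}{b_n}\,\Delta_{X|Y_1}\bigl(S_\bullet^{(b_n)} - a_n Y_1\bigr).
\end{equation}
One inclusion in the target formula is then routine: since $S_\bullet^{(b)} - aY_1 \subseteq S_\bullet^{(b)}$ and the Veronese identity $\tfrac{1}{b}\,\Delta_{X|Y_1}(S_\bullet^{(b)}) = \Delta_{X|Y_1}(S_\bullet)$ holds, monotonicity of Newton--Okounkov bodies yields $\tfrac{1}{b_n}\,\Delta_{X|Y_1}(S_\bullet^{(b_n)} - a_n Y_1) \subseteq \Delta_{X|Y_1}(S_\bullet)$, so the limit is automatically contained in $\Delta_{X|Y_1}(S_\bullet)$.

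The crux is the reverse inclusion, which is where the ample-series hypothesis becomes essential and which I expect to be the main obstacle. Given a point $v$ in the interior of $\Delta_{X|Y_1}(S_\bullet)$, one must produce approximations of $v$ inside $\tfrac{1}{b_n}\,\Delta_{X|Y_1}(S_\bullet^{(b_n)} - a_n Y_1)$. Since $Y_1 \not\subset E$, the restriction $(D-E)|_{Y_1}$ is ample, so $S_\bullet|_{Y_1}$ contains an ample series on $Y_1$; this should provide a rich enough supply of sections in $S_{b_n k}$ vanishing to order at least $a_n k$ along $Y_1$ whose restrictions to $Y_1$ carry the target valuation, by multiplying test sections realizing $v$ by suitable powers of a defining section of $Y_1$ coming from the ample-series structure. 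The hardest point will be the quantitative control showing that the deficit caused by the shift $-a_n Y_1$ shrinks to zero once rescaled by $\tfrac{1}{b_n}$, so that the rescaled bodies converge in Hausdorff distance; here the hypothesis $Y_d \notin \bs(S_\bullet)$ intervenes to guarantee that the lower-dimensional valuations along $Y_1 \supset \cdots \supset Y_d$ remain non-degenerate during the whole approximation, so that the claimed Hausdorff limit is computed by the expected convex geometry.
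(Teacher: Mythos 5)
Your reduction via the positive-$t$ slice formula and Hausdorff continuity of slices of a convex body is sound as far as it goes, but it only \emph{reformulates} the theorem rather than proving it: combining Theorem A with the (correct) fact that $\Delta_{Y_\bullet}(S_\bullet)_{\nu_1=t}\to\Delta_{Y_\bullet}(S_\bullet)_{\nu_1=0}$ as $t\to0^+$ shows that the claim is \emph{equivalent} to $\lim_n \tfrac{1}{b_n}\Delta_{X|Y_1}(S^{(b_n)}_\bullet-a_nY_1)=\Delta_{X|Y_1}(S_\bullet)$, and you then leave the hard half of that as a heuristic ("this should provide a rich enough supply of sections\dots", "the hardest point will be the quantitative control\dots"). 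That hard half is exactly the content of the theorem: a priori the zero-slice, which is a limit built from sections vanishing to small positive order along $Y_1$, could be strictly larger than $\Delta_{X|Y_1}(S_\bullet)$, which only sees sections not vanishing along $Y_1$ at all. (Also note a literal impossibility in your setup: since $\nu_1\ge0$ on the whole body, the hyperplane $\{0\}\times\R^{d-1}$ is supporting and can never meet the interior of $\Delta_{Y_\bullet}(S_\bullet)$; what you need, and what the ample-series hypothesis gives, is that the face $\nu_1=0$ is nonempty.)

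Moreover, the inclusion you call "routine" is based on a false monotonicity. The restricted series $(S^{(b)}_\bullet-aY_1)_{|Y_1}$ is \emph{not} a subseries of $(S^{(b)}_\bullet)_{|Y_1}$: its sections are restrictions of $s/s_{Y_1}^{ak}$ for $s\in S_{bk}$ with $\operatorname{ord}_{Y_1}(s)=ak$, whereas the latter only sees sections with $\operatorname{ord}_{Y_1}(s)=0$, so there is no containment of bodies to appeal to. Indeed the conclusion you draw from it, $\Delta_{Y_\bullet}(S_\bullet)_{\nu_1=t}\subseteq\Delta_{Y_\bullet}(S_\bullet)_{\nu_1=0}$ for $t>0$, fails for something as simple as a simplex with a vertex on the wall $\nu_1=0$; only the Hausdorff \emph{limit} of the slices is contained in (in fact equal to) the zero-slice. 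The paper's proof takes an entirely different route that actually supplies the missing input: first the finitely generated case, by blowing up the base ideal $\bsi_{S_1}$ so that $\pi^*S_\bullet-E$ becomes base-point free with full volume relative to $\pi^*D-E$, then invoking the Lazarsfeld--Musta\c{t}\u{a} $t=0$ slice formula for the big divisor $\pi^*D-E$ together with the equality of restricted volumes (Theorem \ref{thmsliceful}) and the volume-equality-implies-body-equality lemma (Lemma \ref{lemvolok}); the general case is then reached by Fujita approximation with $V_{k,p}=\operatorname{Im}(\operatorname{Sym}^k S_p\to S_{kp})$. The hypothesis $Y_d\notin\bs(S_\bullet)$ is what lets the flag survive the blow-up. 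To salvage your approach you would need to prove the Hausdorff convergence of $\tfrac{1}{b_n}\Delta_{X|Y_1}(S^{(b_n)}_\bullet-a_nY_1)$ to $\Delta_{X|Y_1}(S_\bullet)$ directly, and no argument for that is given.
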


The existence of a generic Newton-Okounkov body, according to Part (b), can be generalized to the case of  birational graded linear series $S_\bullet$. More precisely, we derive the following theorem.
\begin{thmx}
Let $X, T$ and $\mathcal{Y}_\bullet$ be as in Section \ref{famok}. Let $S_\bullet$ be a birational  graded linear series.
Then for a very general choice of $t\in T$ all the Newton-Okounkov bodies
$\Delta_{Y_{t,\bullet}}(S_\bullet)$
coincide. 
\end{thmx}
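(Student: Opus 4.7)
The plan is to adapt the argument of Lazarsfeld--Musta\c{t}\u{a} \cite[Theorem 5.1]{LM09} to the birational setting. The key observation is that for any fixed section $s$ of the relative graded linear series $S_\bullet$ over $T$, the flag valuation $\nu_{Y_{t,\bullet}}(s|_{X_t})$ takes a constant value on some nonempty Zariski open subset of $T$. Because $S_\bullet$ is birational, the body $\Delta_{Y_{t,\bullet}}(S_\bullet)$ is reconstructible from the countably many valuation vectors of sections drawn from a fixed countable family, so the locus in $T$ on which all these bodies agree is the complement of a countable union of proper closed subsets, hence very general.

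First I would fix a countable witness set: for each $k \geq 1$ choose a basis $\{s_{k,1},\dots,s_{k,N_k}\}$ of $S_k$ whose restrictions to the fibres remain a basis over a dense open $T_k \subseteq T$, which is possible by the flatness assumptions built into the setup of Section \ref{famok}. Using upper semicontinuity of orders of vanishing along the first component of the flag, $\mathrm{ord}_{Y_{t,1}}(s_{k,j}|_{X_t})$ is constant on a nonempty open $U^{(1)}_{k,j}\subseteq T_k$. One then proceeds inductively along $\mathcal{Y}_\bullet$, passing at each step to the strict transform and the restricted sections, and uses that these again form a flat family over a further open subset. This yields a nonempty open $U_{k,j}\subseteq T$ on which the full vector $\nu_{Y_{t,\bullet}}(s_{k,j})$ is constant. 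Setting $U:=\bigcap_{k,j}U_{k,j}$ produces a very general subset of $T$.

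For $t\in U$ one then has
\begin{align}
\Delta_{Y_{t,\bullet}}(S_\bullet) = \overline{\conv\Bigl(\bigcup_{k\geq 1}\bigcup_{j=1}^{N_k}\tfrac{1}{k}\,\nu_{Y_{t,\bullet}}(s_{k,j})\Bigr)},
\end{align}
and the right-hand side is manifestly independent of $t\in U$, finishing the argument. The main obstacle is precisely this last equality. For a complete linear series associated to a big divisor it is automatic via the fact that the Okounkov semigroup generates a sublattice of full rank together with standard Fekete-type arguments; for an arbitrary $S_\bullet$ this can fail. The birationality assumption is invoked exactly to guarantee that the semigroup of valuations has full rank $d$ and that the body has nonempty interior, so that taking the closed convex hull of the scaled valuations of a countable family of generating sections already recovers $\Delta_{Y_{t,\bullet}}(S_\bullet)$; verifying this fibrewise and in a way compatible with the family structure over $T$ is the technical heart of the proof.
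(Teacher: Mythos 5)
Your overall plan (spread the data over $T$, make countably many conditions hold on dense opens, intersect) is the same coarse strategy as the paper, but the key step is carried out differently and, as written, has a genuine gap. The problem is the displayed identity
\begin{align}
\Delta_{Y_{t,\bullet}}(S_\bullet) = \overline{\conv\Bigl(\bigcup_{k\geq 1}\bigcup_{j=1}^{N_k}\tfrac{1}{k}\,\nu_{Y_{t,\bullet}}(s_{k,j})\Bigr)}.
\end{align}
The Newton--Okounkov body is the closed convex hull of $\bigcup_k \tfrac1k\,\nu_{Y_{t,\bullet}}(S_{t,k}\setminus\{0\})$, and the set $\nu_{Y_{t,\bullet}}(S_{t,k}\setminus\{0\})$ (which has exactly $\dim S_{t,k}$ elements) is in general strictly larger than the set of valuations of a fixed basis: linear combinations of basis vectors produce new, larger valuation vectors through cancellation (already for $S_1=\langle s, s+s'\rangle$ on a curve with $\nu(s)=0$, $\nu(s')=5$, the basis valuations are $\{0,0\}$ while $\nu(S_1\setminus\{0\})=\{0,5\}$). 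So the right-hand side can be a proper subset of the body. To repair this you would need a basis \emph{adapted} to the flag (distinct valuations exhausting $\nu(S_{t,k}\setminus\{0\})$) that remains adapted on a dense open in $T$; you correctly identify this as the technical heart, but the fix you propose --- birationality forcing the semigroup to have full rank --- does not address it at all. Full rank of $\Gamma(S_\bullet)$ says nothing about whether the valuations of your chosen sections see all valuation vectors. Moreover, birationality is not used in the paper for any rank statement; it is used to replace $S_\bullet$ by its sheafification $\tilde S_\bullet$ (which has the same Newton--Okounkov body by Corollary \ref{sheafvol}, resting on Jow's theorem and hence on birationality), so that the family is induced by coherent sheaves $\mathcal{S}_{T,k}$ in the first place. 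Your proposal never explains how $S_\bullet$ is spread out over $T$ so that the fibrewise series are the intended ones; without the sheaf-theoretic setup, ``restriction of a basis to the fibre'' is not even guaranteed to span $S_{t,k}$.

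The paper sidesteps the adapted-basis problem entirely by working with dimensions rather than with valuations of individual sections: it constructs coherent subsheaves $\mathcal{S}_k^{\geq(\sigma)}$ of sections with valuation at least $\sigma$ along the partial flag, proves a base-change statement $(\mathcal{S}_{T,k}^{\geq(\sigma)})_t=(\mathcal{S}_{t,k})^{\geq(\sigma)}$ for very general $t$ (Lemma \ref{lemgen2}, via generic flatness, a flat base-change lemma and the five lemma), and then uses semicontinuity to make all the numbers $h^0(X_t,\mathcal{S}_{t,k}^{\geq(\sigma)})$ constant in $t$. These dimensions determine the full set of valuation vectors of $S_{t,k}$, hence the semigroup and the body. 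If you want to salvage your section-by-section approach, you must either prove that an adapted basis over the generic point of $T$ stays adapted on a dense open of $T$ for every $k$ simultaneously, or switch, as the paper does, to tracking the filtration dimensions.
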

Note that we do not need any flatness hypothesis for the above theorem. The idea of the proof is that we replace the graded linear series $S_\bullet$ by a possibly larger one which is induced by  coherent sheaves and then use the generic flatness theorem to make sure all the involved data is flat. In addition to proving the existence of generic Newton-Okounkov bodies,
we  give  several examples of how to construct such families.

\section*{Acknowledgement}
The author would like to thank Harold Blum and Marcel Maslovari\'c for helpful discussions.
The author owes his deepest gratitude to his supervisor Henrik Sepp\"anen for his patient guidance, enthusiastic encouragement and useful critiques of this research work.  
 
\section{Preliminaries}
\subsection{Notation}
We work over the field $\mathbb{C}$. Whenever not otherwise stated, $X$ denotes a projective variety over $\mathbb{C}$ of dimension $d$ and $D$ a big Cartier divisor over $X$. Moreover, when we talk of a divisor, we  will always refer to an  integral Cartier divisor.
By an admissible flag $Y_\bullet$ of $X$ we mean an ordered set of irreducible subvarieties:
\begin{align}
Y_\bullet \colon X=Y_0\supseteq Y_1 \supseteq Y_2\supseteq \dots \supseteq Y_{d-1}\supseteq Y_{d}=\{pt\}
\end{align}
where $d$ is the dimension of $X$ such that $\codim_{X}(Y_i)=i$ and each $Y_i$ is non-singular at the point $Y_d$.

\subsection{Graded linear series}
Let $D$ be a divisor on $X$ and let $V\subseteq H^0(X,\struc_X(D))$ be a non-zero vector subspace. Then the projective space of one dimensional  vector subspaces of $V$, which we denote by $|V|:=\mathbb{P}(V)$, is called a linear series. In case $V=H^0(X,\struc_X(D))$ we call it the complete linear series and write $|D|$. Often we are interested in the asymptotic behaviour of $|mD|$ as $m\to \infty$. In order to generalize this for non complete linear series we need the following definition.
\begin{defi}
A \emph{graded linear series} on $X$ corresponding to a divisor $D$ consists of a collection
\begin{align}
S_\bullet = \{ S_k \}_{k\geq 0}
\end{align}
of finite dimensional vector subspaces $S_m\subseteq H^0(X,\struc_X(mD))$ and $S_0=\mathbb{C}$. These subspaces are required to satisfy the property 
\begin{align}
S_k\cdot S_l \subseteq S_{k+l} \quad \text{for all } k,l  \geq 0
\end{align}
where $S_k \cdot S_l$ denotes the image of $S_k\otimes S_l$ under the homomorphism
\begin{align}
H^0(X,\struc_X(kD)) \otimes H^0(X,\struc_X(lD)) \to H^0(X,\struc_X ((k+l)D)).
\end{align}
We call $S_\bullet$ a complete linear series if $S_k=H^0(X,\struc_X(kD))$ for all $k\geq 0$.
\end{defi}
Given a graded linear series $S_\bullet$ we can define the \emph{graded algebra}
\begin{align}
R(S_\bullet):= \bigoplus^\infty_{k=0} S_k.
\end{align}
If $S_\bullet$ is a complete graded linear series corresponding to $D$, we write $R_{\bullet}(X,D)$ for the graded algebra of sections.
We say that $S_\bullet$ is \emph{finitely generated} if $R(S_\bullet)$ is finitely generated as a $\mathbb{C}$-algebra.

A linear series $|V|$ corresponds to a rational morphism 
\begin{align}
h_{|V|}\colon X \dashrightarrow \mathbb{P}^N.
\end{align}
\begin{defi}
We say that a graded linear series $S_\bullet$ is \emph{birational} if the rational map corresponding to the linear series $|S_k|$
\begin{align}
h_{|S_k|}\colon X \dashrightarrow \mathbb{P}^N 
\end{align}
is birational onto its image for $k\gg 0$.
\end{defi}
If $S_\bullet$ is finitely generated, then $S_\bullet$ is birational if and only if the induced rational map
\begin{align}
h_{S_\bullet} \colon X \dashrightarrow \proj (S_\bullet):=\proj(R(S_\bullet))
\end{align}
is birational.

If $S_\bullet$ and $T_\bullet$ are two graded linear series, we write $S_\bullet\subseteq T_\bullet$ if $S_k\subseteq T_k$ for all $k\geq 0$.

Finally, note that many  notions as the volume or the stable base locus can be defined for graded linear series completely analogously as in the case of complete graded linear series (see \cite[2.4]{laz} for more details).

\subsection{Construction of  Newton-Okounkov bodies}
In this section we want to give  a very brief overview of the construction of  Netwon-Okounkov bodies and state some elementary facts about them. For a detailed overview see e.g. \cite{LM09}.

First of all we fix an admissible flag $Y_\bullet$ and a graded linear series $S_\bullet$ of $X$. Now by an iterative procedure, taking the order of vanishing along the given $Y_i$ into account, we construct for each $k\in\mathbb{N}$ a valuation map
\begin{align}
\nu_{Y_\bullet} \colon S_k \setminus \{0 \} \to \mathbb{Z}^d.
\end{align}
The two essential properties of $\nu_{Y_\bullet}$ are:
\begin{itemize}

\item ordering $\mathbb{Z}^d$ lexicographically, we have
\begin{align}
\nu_{Y_\bullet}(s_1+s_2)\geq \min \{ \nu_{Y_\bullet}(s_1),\nu_{Y_\bullet}(s_2) \}
\end{align}
for any $s_1,s_2 \in S_k\setminus \{0\}$
\item given two non zero sections $s\in S_k$ and $t\in S_l$ then
\begin{align}
\nu_{Y_\bullet}(s\otimes t )= \nu_{Y_\bullet}(s)+ \nu_{Y_\bullet}(t).
\end{align}
\end{itemize}
The valuation function gives rise to the semigroup
\begin{align}
\Gamma(S_\bullet):= \{ (\nu_{Y_\bullet}(s),k) \ : \ s\in S_k\setminus\{0\}, k\in \mathbb{N} \} \subseteq \mathbb{N}^{d+1}.
\end{align}
Then the Newton-Okounkov body of $S_\bullet$ corresponding to the flag $Y_\bullet$ is given by
\begin{align}
\Delta_{Y_\bullet}(S_\bullet):= \overline {\text{Cone}(\Gamma(S_\bullet))}\cap \left( \mathbb{R}^d\times \{1\}\right).
\end{align}
In \cite{LM09} it was shown that for a graded linear series $S_\bullet$ corresponding to a big divisor $D$ which has the additional property that the semigroup $\Gamma(S_\bullet)$ generates $\mathbb{Z}^{d+1}$ as a group, we have
\begin{align}
\vol_{\mathbb{R}^d} (\Delta_{Y_\bullet}(S_\bullet))=\frac{1}{d!} \cdot \vol (S_\bullet)
\end{align}
where 
\begin{align}
\vol(S_\bullet):= \lim_{k\to \infty} \frac{\dim S_k}{k^d/d!}.
\end{align}
However, the more general case was treated in \cite{KK12}. They showed that for an arbitrary graded linear series $S_\bullet$, we have
\begin{align}\label{volkk}
\frac{\vol(\Delta_{Y_\bullet}(S_\bullet))}{\ind(S_\bullet)}= \frac{ \vol (S_\bullet)} {d!}
\end{align}
where $\ind (S_\bullet)$ is the index of the group generated by $\Gamma(S_\bullet)$ in $\mathbb{Z}^d$.
 
 So for arbitrary graded linear series the volume of the Newton-Okounkov body does indeed depend on the choice of the flag.

\section{Volume and base Locus of graded linear series}
In this section we want to analyze the correspondence between the volume of a  graded linear series and its base locus. We first focus on the case where the graded linear series $S_\bullet$ corresponding to $D$ has full volume, i.e.
\begin{align}
\vol(S_\bullet)=\vol(D).
\end{align}
In this case we have a characterization of finitely generated graded linear series $S_\bullet$ given by Theorem \ref{thmcharempty}. This characterization will help us to make sense of the sheafication of a graded linear series, which will be necessary for Section \ref{secgen}, as well as for deriving slice formulas in the following section.

\subsection{Stable base locus and volume of finitely generated graded linear series}

The aim of this paragraph is to to show that two finitely generated graded linear series which have the same volume also have the same stable base locus.

The following  proposition will be helpful.
\begin{prop}
Let $S_\bullet$ be a graded linear series and $x\in X$. Consider the induced graded linear series $W_\bullet$ defined by
 \[W_k:=\{ s\in S_k \ : \ \operatorname{ord}_x(s)\geq \lceil kr \rceil \} \] for some fixed $r>0$. Then for all admissible flags $Y_\bullet$ centered at $x$, the origin $0$ does not lie in the Newton-Okounkov body $\Delta_{Y_\bullet}(W_\bullet)$.
\end{prop}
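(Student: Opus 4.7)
The plan is to exhibit a positive linear functional on $\mathbb{R}^d$ that is bounded below on $\Delta_{Y_\bullet}(W_\bullet)$ by the strictly positive constant $r$. The key ingredient is a pointwise lower bound relating the flag valuation to the order of vanishing at $x=Y_d$.

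Concretely, I would first establish the general estimate
\begin{align}
\nu_{Y_\bullet}(s)_1 + \nu_{Y_\bullet}(s)_2 + \cdots + \nu_{Y_\bullet}(s)_d \;\geq\; \operatorname{ord}_x(s)
\end{align}
for every nonzero section $s$ of $\struc_X(kD)$. To verify it, I would pick formal (or analytic) local coordinates $z_1,\ldots,z_d$ at $x$ adapted to the flag, so that $Y_i$ is locally cut out by $z_1=\cdots=z_i=0$; such coordinates exist because each $Y_i$ is smooth at $x$ with the correct codimensions. Expanding $s=\sum_\alpha a_\alpha z^\alpha$ and unwinding the inductive construction of $\nu_{Y_\bullet}$ (order along $Y_1$; factor out $z_1^{\nu_1}$; restrict to $Y_1$; order along $Y_2$; etc.) identifies $\nu_{Y_\bullet}(s)$ with the lexicographically smallest multi-index $\alpha^{\ast}$ for which $a_{\alpha^{\ast}}\neq 0$. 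Since every $\alpha$ in the support satisfies $\alpha_1+\cdots+\alpha_d\geq \operatorname{ord}_x(s)$, the same inequality holds for $\alpha^{\ast}$, proving the estimate.

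Applied to a nonzero section $s\in W_k$, the defining condition $\operatorname{ord}_x(s)\geq\lceil kr\rceil$ together with the estimate above gives
\begin{align}
\sum_{i=1}^d \nu_{Y_\bullet}(s)_i \;\geq\; \lceil kr\rceil \;\geq\; kr.
\end{align}
Hence every element $(\nu,k)\in\Gamma(W_\bullet)$ lies in the closed convex cone
\begin{align}
H \;:=\; \bigl\{(v,t)\in\mathbb{R}^{d+1}\;:\;v_1+\cdots+v_d\geq r\,t\bigr\}.
\end{align}
Because $H$ is convex and closed, it contains $\overline{\operatorname{Cone}(\Gamma(W_\bullet))}$, and intersecting with $\mathbb{R}^d\times\{1\}$ yields
\begin{align}
\Delta_{Y_\bullet}(W_\bullet)\;\subseteq\;\bigl\{v\in\mathbb{R}^d\;:\;v_1+\cdots+v_d\geq r\bigr\}.
\end{align}
Since $r>0$, the origin is not in this half-space, which is the claim.

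The only delicate point is the local-coordinates identification of $\nu_{Y_\bullet}$ with the lex-minimal supporting exponent; this is standard for admissible flags but deserves an explicit verification. After that, the argument is a one-line convex geometry observation (the half-space $\{v_1+\cdots+v_d\geq r\}$ is stable under convex combinations, positive scaling, closure, and slicing at height $1$), and no subtlety related to finite generation of $R(W_\bullet)$ or to the group generated by $\Gamma(W_\bullet)$ enters the argument.
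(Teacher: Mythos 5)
Your proof is correct and rests on exactly the same key ingredient as the paper's: the inequality $\operatorname{ord}_x(s)\leq \sum_{i=1}^d \nu_{Y_\bullet}(s)_i$, which the paper simply cites as Lemma 2.4 of K\"uronya--Lozovanu and which you reprove via coordinates adapted to the flag. The only difference is in the final step: the paper argues by contradiction, extracting a sequence of sections whose normalized valuation vectors converge to $0$ and comparing limits, whereas you observe directly that the whole semigroup $\Gamma(W_\bullet)$ lies in the closed half-space $\{\,v_1+\cdots+v_d\geq rt\,\}$ and hence so does the closed cone and the body; this is slightly cleaner (it avoids the paper's appeal to $0$ being an extreme point) and gives the marginally stronger conclusion $\Delta_{Y_\bullet}(W_\bullet)\subseteq\{v:\sum_i v_i\geq r\}$.
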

\begin{proof}
Let us assume that $0$ lies in the Newton-Okounkov body. Then it must be an extreme point. Thus there exists  a series of sections $s^k\in W_k$ such that $1/k\cdot\nu(s^k)=1/k \cdot (\nu_1(s^k),\dots \nu_d(s^k))$ converges to $0$ as $k$ tends to infinity. By \cite[Lemma 2.4]{KL15}, we have
\begin{align}
\operatorname{ord}_x(s^k)\leq \sum_{i=1}^d \nu_i(s^k).
\end{align}
Dividing by $k$ leads to
\begin{align}
1/k \cdot \operatorname{ord}_x(s^k)\leq 1/k \cdot \sum_{i=1}^d \nu_i(s^k).
\end{align}
As $k$ tends to infinity the right hand side goes to $0$, but the left hand side is lower bounded by $r>0$, which gives a contradiction.
Thus $0$ does not lie in the Newton-Okounkov body $\Delta_{Y_\bullet}(W_\bullet)$.
\end{proof}

The next theorem is an analog of \cite{KL15} Theorem A for finitely generated graded linear series.
\begin{thm}\label{gradedfiniteKL} Let $X$ be smooth and
 let $S_\bullet$ be a finitely generated graded linear series. Then the following  conditions are equivalent:
\begin{enumerate}
\item $x\not\in \bs (S_\bullet)$
\item $0\in \Delta_{Y_\bullet}(S_\bullet)$ for each admissible flag $Y_\bullet$ centered  at $\{x\}$
\item There exists an admissible flag $Y_\bullet$ centered at $x$ such that $0\in\Delta_{Y_\bullet}(S_\bullet).$
\end{enumerate}
\end{thm}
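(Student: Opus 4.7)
The plan is to run the cycle $(2)\Rightarrow(3)\Rightarrow(1)\Rightarrow(2)$, where the first implication is vacuous. For $(1)\Rightarrow(2)$ I would use the basic fact that $x\notin\bs(S_\bullet)$ provides some $k$ and some $s\in S_k$ with $s(x)\neq 0$, i.e.\ $\operatorname{ord}_x(s)=0$. The inequality of \cite[Lemma 2.4]{KL15}, already invoked in the preceding proposition, then gives $0=\operatorname{ord}_x(s)\geq \sum_{i=1}^d\nu_i(s)$, which together with $\nu_i(s)\geq 0$ forces $\nu_{Y_\bullet}(s)=0$ for every admissible flag $Y_\bullet$ centered at $x$. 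Hence $(0,k)\in\Gamma(S_\bullet)$ and $0\in\Delta_{Y_\bullet}(S_\bullet)$.

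The heart of the theorem is $(3)\Rightarrow(1)$, which I would prove by contraposition and direct appeal to the preceding proposition. Assume $x\in\bs(S_\bullet)$; the goal is to produce a single $r>0$ such that $\operatorname{ord}_x(s)\geq \lceil kr\rceil$ for every $k$ and every $s\in S_k$. Once such an $r$ is in hand, the graded linear series $S_\bullet$ coincides with the series $W_\bullet$ from the preceding proposition for this $r$, and that proposition immediately yields $0\notin\Delta_{Y_\bullet}(S_\bullet)$ for every admissible flag centered at $x$, contradicting $(3)$.

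To produce $r$, finite generation is essential. I would fix homogeneous generators $s_1,\dots,s_N$ of $R(S_\bullet)$ with $s_i\in S_{k_i}$ and set $K:=\max_i k_i$. Since $x\in\bs(S_\bullet)$, every $s_i$ vanishes at $x$, so $\operatorname{ord}_x(s_i)\geq 1$. Any $s\in S_k$ is a polynomial in the generators, hence a sum of monomials $\prod_j s_j^{a_j}$ with $\sum_j a_j k_j=k$; consequently $\sum_j a_j\geq k/K$ and
\begin{align}
\operatorname{ord}_x\Bigl(\prod_j s_j^{a_j}\Bigr)\;\geq\;\sum_j a_j\,\operatorname{ord}_x(s_j)\;\geq\;\sum_j a_j\;\geq\;\frac{k}{K}.
\end{align}
Because $\operatorname{ord}_x$ is non-archimedean, the same bound survives the sum, so $r:=1/K$ works, and since $\operatorname{ord}_x(s)$ is an integer the real inequality upgrades to $\operatorname{ord}_x(s)\geq \lceil k/K\rceil$ automatically. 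The main obstacle is precisely this step of extracting a uniform linear lower bound on the vanishing order from finite generation: without that hypothesis one could a priori have sections whose vanishing at $x$ grows sublinearly in $k$, and the reduction to the $W_\bullet$-proposition would break down. The remaining input, \cite[Lemma 2.4]{KL15}, is exactly the bridge between the algebraic condition $x\in\bs(S_\bullet)$ and the convex-geometric condition $0\in\Delta_{Y_\bullet}(S_\bullet)$, and it is here that the smoothness assumption on $X$ enters.
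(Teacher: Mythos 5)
Your proof of the main implication $(3)\Rightarrow(1)$ is essentially identical to the paper's: both use finite generation to extract the uniform bound $\operatorname{ord}_x(s)\geq \lceil k/K\rceil$ (the paper writes $N$ for your $K$ and leaves the monomial computation implicit), identify $S_\bullet$ with a series $W_\bullet$ of the type in the preceding proposition, and conclude $0\notin\Delta_{Y_\bullet}(S_\bullet)$. The only flaw is in your justification of $(1)\Rightarrow(2)$: the inequality in \cite[Lemma 2.4]{KL15}, as used in the preceding proposition, reads $\operatorname{ord}_x(s)\leq \sum_{i=1}^d\nu_i(s)$, not $\geq$; the reversed inequality you invoke is false in general (e.g.\ a section smooth at $x$ but highly tangent to $Y_1$ has $\operatorname{ord}_x(s)=1$ while $\nu_2(s)$ is large), and the correct inequality gives no information when $\operatorname{ord}_x(s)=0$. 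Fortunately this step needs no lemma at all: if $s(x)\neq 0$ then at each stage of the inductive construction of $\nu_{Y_\bullet}$ the restricted section is nonzero at $x\in Y_i$, so $\nu_i(s)=0$ directly, which is exactly the paper's one-line argument.
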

\begin{proof}
$(a) \rightarrow (b)$ is trivial, since $\nu_{Y_\bullet}(s)=0$ for all sections $s\in S_k$ that do not vanish at $x$.
$(b)\rightarrow (c)$ is also trivial.

Let us prove $(c)\rightarrow (a)$. Let $0$ lie in the Newton-Okounkov body $\Delta_{Y_\bullet}(S_\bullet)$ and let us assume that $x\in \bs(S_\bullet)$. 
Let $s_1,\dots, s_n$ be homogeneous generators of $R(S_\bullet)$ and denote by $N$ the maximum of the degrees of these generators.
 By assumption, all these generators vanish at $x$ to order at least one. Thus, we have an inclusion $S_\bullet\subseteq W_\bullet$ of graded linear series, where 
\begin{align} 
 W_k:=\{s\in H^0(X,\mathcal{O}_X(kD)) 
\ : \ \operatorname{ord}_x(s)\geq \lceil k/N\rceil \}.
\end{align}
  But by the previous proposition, $0\not \in \Delta(W_k)$. This contradicts the fact that $0\in \Delta_{Y_\bullet}(S_\bullet)\subseteq \Delta_{Y_\bullet}(W_\bullet)$. Thus $x\not \in \bs(S_\bullet)$.
\end{proof}
\begin{lem} \label{lemvolok}
Let $S_\bullet\subseteq T_\bullet$ be two graded linear series. Then $\vol(S_\bullet)=\vol(T_\bullet)\neq 0$ implies that $\Delta_{Y_\bullet}(S)=\Delta_{Y_\bullet}(T_\bullet)$ for all admissible flags $Y_\bullet$.
\end{lem}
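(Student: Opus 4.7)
The plan is to combine the trivial set containment $\Delta_{Y_\bullet}(S_\bullet) \subseteq \Delta_{Y_\bullet}(T_\bullet)$ with the Kaveh-Khovanskii formula \eqref{volkk}, and then invoke a standard convex-geometric fact to pass from equality of volumes to equality of bodies.

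First I would observe that the inclusion $S_\bullet \subseteq T_\bullet$ yields the inclusion of semigroups $\Gamma(S_\bullet) \subseteq \Gamma(T_\bullet)$ in $\mathbb{N}^{d+1}$, and consequently both the inclusion $\Delta_{Y_\bullet}(S_\bullet) \subseteq \Delta_{Y_\bullet}(T_\bullet)$ of the Okounkov bodies and the index inequality $\ind(S_\bullet) \geq \ind(T_\bullet)$ on the level of the generated subgroups of the ambient lattice (a subgroup has index at least as large as that of a containing subgroup). Plugging the hypothesis $\vol(S_\bullet)=\vol(T_\bullet)\neq 0$ into \eqref{volkk} then gives
\[
\vol(\Delta_{Y_\bullet}(S_\bullet)) \;=\; \frac{\ind(S_\bullet)\,\vol(S_\bullet)}{d!} \;\geq\; \frac{\ind(T_\bullet)\,\vol(T_\bullet)}{d!} \;=\; \vol(\Delta_{Y_\bullet}(T_\bullet)),
\]
which, combined with the reverse inequality coming from the set containment, forces the two bodies to have the same positive and finite Euclidean volume.

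To conclude I would invoke the following elementary fact: any convex subset of $\mathbb{R}^d$ of finite positive Lebesgue measure is automatically bounded with non-empty interior, and hence equals the closure of its interior. For two such convex bodies $A \subseteq B$ with equal volume, the set $B \setminus A$ is relatively open in $B$ and of measure zero, so it cannot meet $\operatorname{int}(B)$; this forces $\operatorname{int}(B) \subseteq A$ and hence $B = \overline{\operatorname{int}(B)} \subseteq A$. Applying this to $A = \Delta_{Y_\bullet}(S_\bullet)$ and $B = \Delta_{Y_\bullet}(T_\bullet)$ yields the claimed equality.

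The argument is essentially formal; the only genuinely non-trivial input is the Kaveh-Khovanskii volume formula together with the observation that enlarging the graded linear series can only lower (or preserve) the index of the generated lattice. I do not foresee a serious obstacle: conceptually, the hypothesis $\vol(S_\bullet)=\vol(T_\bullet)$ prevents the enlargement $S_\bullet \hookrightarrow T_\bullet$ from simultaneously widening the Okounkov body and refining the generated lattice.
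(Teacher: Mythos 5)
Your proposal is correct and follows essentially the same route as the paper: the semigroup inclusion gives both $\Delta_{Y_\bullet}(S_\bullet)\subseteq\Delta_{Y_\bullet}(T_\bullet)$ and $\ind(S_\bullet)\geq\ind(T_\bullet)$, the Kaveh--Khovanskii formula then forces the Euclidean volumes of the two bodies to agree, and a standard convexity argument upgrades equality of volumes to equality of the closed convex bodies. The only cosmetic difference is in the last step, where the paper argues by contradiction with a small ball around a point of $\Delta_{Y_\bullet}(T_\bullet)\setminus\Delta_{Y_\bullet}(S_\bullet)$, while you phrase the same fact via the relatively open, measure-zero complement.
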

\begin{proof}
First of all, we show that for all admissible flags the volume of the Newton-Okounkov bodies coincide. We will do this by showing that the indices of the semigroups $\Gamma(S_\bullet)$ and $\Gamma(T_\bullet)$ are equal. Clearly, $\Gamma(S_\bullet)\subseteq \Gamma(T_\bullet)$ and hence $\ind(S_\bullet)\geq \ind(T_\bullet)$. On the other hand, the volume formula for Newton-Okounkov bodies yields  the equality
\begin{align}
d! \cdot \vol(S_\bullet)=\frac{\vol(\Delta_{Y_\bullet}(S_\bullet))}{\ind(S_\bullet)}= 
\frac{\vol(\Delta_{Y_\bullet}(T_\bullet))}{\ind(T_\bullet)}=d! \cdot \vol(T_\bullet).
\end{align}
From this equality and the fact that $\vol(\Delta_{Y_\bullet}(S_\bullet))\leq \vol (\Delta_{Y_\bullet}(T_\bullet))$ we get that $\ind(S_\bullet)\leq \ind(T_\bullet)$, which implies $\ind(S_\bullet)=\ind(T_\bullet)$. Again from the volume formula, we deduce that the volume of the Newton-Okounkov bodies are equal.
Let us now assume that there is a flag $Y_\bullet$ such that $\Delta_{Y_\bullet}(S_\bullet)\subsetneq \Delta_{Y_\bullet}(T_\bullet)$. Then there is a point $P$ in $\Delta_{Y_\bullet}(T_\bullet)$ which does not lie in $\Delta_{Y_\bullet}(S_\bullet)$.
Since $\Delta_{Y_\bullet}(S_\bullet)$ is closed and convex, the point $P$ has a positive distance to $\Delta_{Y_\bullet}(S_\bullet)$. Hence, there is a  $d$-dimensional ball $B(0,\varepsilon)$ around the origin which does not intersect $\Delta_{Y_\bullet}(S_\bullet)$. The intersection of $B(0,\varepsilon)$ with $\Delta_{Y_\bullet}(T_\bullet)$ has positive volume. This shows that we cannot have $\vol(\Delta_{Y_\bullet}(S_\bullet))=\vol(\Delta_{Y_\bullet}(T_\bullet))$.
\end{proof}
For the next lemma we will need the definition of a pulled back linear series.
Let $\pi\colon X\to Y$ be a morphism of projective varieties and $S_\bullet$ a graded linear series on $Y$. Then we can define $\pi^*S_\bullet$ by $\pi^*S_k:=\{\pi^*s \ : \ s\in S_k\}$.

\begin{lem} \label{lemnotsmooth}
Let $\pi\colon X\to Y$ be a  surjective morphism of projective varieties. Let $S_\bullet$ be a graded linear series on $Y$. Then we have 
\begin{align}
 \pi(\bs(\pi^*S_\bullet)= \bs(S_\bullet)
 \end{align}
 \end{lem}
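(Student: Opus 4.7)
The plan is to prove the cleaner identity $\bs(\pi^{*}S_\bullet)=\pi^{-1}(\bs(S_\bullet))$ and then recover the lemma by applying $\pi$ to both sides: since $\pi$ is surjective, $\pi(\pi^{-1}(B))=B$ for every subset $B\subseteq Y$, so pushing forward converts the preimage identity into the stated equality.

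To establish the preimage identity I would work one graded piece at a time. Fix $k\geq 1$; by definition $\pi^{*}S_k$ is the image of $S_k$ under the pullback map $H^0(Y,\struc_Y(kD))\to H^0(X,\struc_X(k\pi^{*}D))$, which is injective because $\pi$ is surjective. A point $x\in X$ lies in $\bl(|\pi^{*}S_k|)$ if and only if $(\pi^{*}s)(x)=0$ for every $s\in S_k$, and since $(\pi^{*}s)(x)$ vanishes precisely when $s$ vanishes at $\pi(x)$, this is equivalent to $\pi(x)\in\bl(|S_k|)$. Hence $\bl(|\pi^{*}S_k|)=\pi^{-1}(\bl(|S_k|))$ for every $k$.

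Intersecting over $k\geq 1$ and using that $\pi^{-1}$ commutes with arbitrary intersections of subsets of $Y$, I get
\begin{align}
\bs(\pi^{*}S_\bullet)=\bigcap_{k\geq 1}\bl(|\pi^{*}S_k|)=\bigcap_{k\geq 1}\pi^{-1}(\bl(|S_k|))=\pi^{-1}\!\left(\bigcap_{k\geq 1}\bl(|S_k|)\right)=\pi^{-1}(\bs(S_\bullet)).
\end{align}
Applying $\pi$ to both sides and invoking surjectivity then yields $\pi(\bs(\pi^{*}S_\bullet))=\bs(S_\bullet)$.

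The argument is essentially formal, so I do not expect a real obstacle. The only substantive point is the pointwise identity $\bl(|\pi^{*}S_k|)=\pi^{-1}(\bl(|S_k|))$, which rests on the tautology that a pulled-back section vanishes exactly on the preimage of the zero locus of the original section; everything else is set-theoretic bookkeeping together with the surjectivity hypothesis.
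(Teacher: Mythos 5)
Your proposal is correct and follows essentially the same route as the paper: both rest on the tautology that $(\pi^{*}s)(x)=0$ if and only if $s(\pi(x))=0$, giving $\bs(\pi^{*}S_\bullet)=\pi^{-1}(\bs(S_\bullet))$, after which surjectivity of $\pi$ yields the stated equality. Your version merely spells out the set-theoretic bookkeeping (intersecting over $k$ and pushing forward) that the paper leaves implicit.
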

\begin{proof}
Let $x\in \bs(\pi^*S_\bullet)$ this is equivalent to 
\begin{align}
\pi^*s(x)=s(\pi(x))=0
\end{align} for all $k\geq 0$ and $s\in S_k$.
But this is equivalent to $\pi(x)\in \bs(S_\bullet)$.
Hence, we have the desired result.

\end{proof}

\begin{thm} \label{thmvolbase2}
Let $S_\bullet \subseteq T_\bullet$ be  two graded linear series and let $S_\bullet$ be finitely generated. Then $\vol(S_\bullet)=\vol(T_\bullet)$ implies that $\bs(S_\bullet)=\bs(T_\bullet)$.
\end{thm}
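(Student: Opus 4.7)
The inclusion $\bs(T_\bullet) \subseteq \bs(S_\bullet)$ is immediate from $S_\bullet \subseteq T_\bullet$: any section $s \in S_k$ not vanishing at a point $x$ also lies in $T_k$, so non-base-points of $S_\bullet$ are non-base-points of $T_\bullet$. The real content of the theorem is the reverse inclusion $\bs(S_\bullet) \subseteq \bs(T_\bullet)$, and my plan is to translate the problem into convex geometry via the equality of Newton-Okounkov bodies provided by Lemma \ref{lemvolok}, then read off the base-locus statement using Theorem \ref{gradedfiniteKL}, which needs the finite-generation hypothesis on $S_\bullet$.

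Concretely, assuming first that $X$ is smooth, I would proceed as follows. Applying Lemma \ref{lemvolok} to the assumption $\vol(S_\bullet) = \vol(T_\bullet)$ (non-zero, otherwise the statement is either vacuous or handled separately) yields
\begin{align}
\Delta_{Y_\bullet}(S_\bullet) = \Delta_{Y_\bullet}(T_\bullet)
\end{align}
for every admissible flag $Y_\bullet$. Now take $x \notin \bs(T_\bullet)$, pick any admissible flag $Y_\bullet$ centered at $x$, and choose $s \in T_k$ with $s(x) \neq 0$ for some $k$. Because $s$ does not vanish at $x \in Y_1$, its order of vanishing along $Y_1$ is zero; iterating this observation through the flag gives $\nu_{Y_\bullet}(s) = 0$, so $0 \in \Delta_{Y_\bullet}(T_\bullet)$. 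By the displayed equality, $0 \in \Delta_{Y_\bullet}(S_\bullet)$ as well. Since $S_\bullet$ is finitely generated, the implication $(c)\Rightarrow(a)$ of Theorem \ref{gradedfiniteKL} applies and gives $x \notin \bs(S_\bullet)$, as desired.

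The main obstacle is that Theorem \ref{gradedfiniteKL} is stated under the assumption that $X$ is smooth, and the argument above also requires the existence of an admissible flag centered at an arbitrary point outside $\bs(T_\bullet)$. To handle a possibly singular $X$, I would pass to a resolution of singularities $\pi\colon \tilde{X}\to X$ and work with the pulled-back series $\pi^*S_\bullet \subseteq \pi^*T_\bullet$, which remain finitely generated (a finite generating set of $R(S_\bullet)$ pulls back to one for $R(\pi^*S_\bullet)$) and satisfy $\vol(\pi^*S_\bullet)=\vol(\pi^*T_\bullet)$ since $\pi$ is birational and therefore pullback induces an isomorphism on global sections of the corresponding twists. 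Applying the smooth case of the argument on $\tilde{X}$ yields $\bs(\pi^*S_\bullet) = \bs(\pi^*T_\bullet)$, and Lemma \ref{lemnotsmooth} then pushes this equality down to $\bs(S_\bullet) = \bs(T_\bullet)$. The only remaining subtlety is the edge case $\vol(S_\bullet)=\vol(T_\bullet)=0$, where Lemma \ref{lemvolok} does not apply directly; this can be handled by observing that for a finitely generated $S_\bullet$ of volume zero the rational map $h_{S_\bullet}$ contracts $X$ to something of lower dimension, and a short separate argument (or the standard convention that such series are excluded from the big setting of this section) suffices.
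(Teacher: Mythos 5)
Your argument is correct and follows essentially the same route as the paper's proof: equality of Newton--Okounkov bodies via Lemma \ref{lemvolok}, the characterization of base points through the origin via Theorem \ref{gradedfiniteKL}, and reduction of the singular case to the smooth one by resolution together with Lemma \ref{lemnotsmooth}. Your explicit attention to the zero-volume edge case (which Lemma \ref{lemvolok} excludes) is a small point of extra care that the paper's own proof glosses over.
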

\begin{proof}

Let us first assume that $X$ is smooth.
It is obvious that $\bs(T_\bullet)\subseteq \bs(S_\bullet)$. Let us show the other inclusion. Let therefore $x\in \bs(S_\bullet)$ and  assume that $x$ does not lie in $\bs(T_\bullet)$. Then for all admissible flags $Y_\bullet$ centered at $\{x\}$ we have that $0\in \Delta_{Y_\bullet} (T_\bullet)$. By Theorem \ref{gradedfiniteKL}, we know that $0\not \in \Delta_{Y_\bullet}(S_\bullet)$. Using Lemma \ref{lemvolok}  implies that $\Delta_{Y_\bullet}(S_\bullet)=\Delta_{Y_\bullet}(T_\bullet)$, which gives a contradiction. Thus $x$ does lie in $\bs(T_\bullet)$.

Now consider the case where $X$ is not necessarily smooth and $\pi\colon \tilde{X}\to X$ is a resolution of singularities. Since we have bijection of sections $\pi^*S_k\cong S_k$ and $\pi^*T_k\cong T_k$ we conclude $\vol(\pi^*T_\bullet)=\vol(\pi^*S_\bullet)$. Since $\tilde{X}$ is smooth we can deduce that $\bs(\pi^*S_\bullet)=\bs(\pi^*T_\bullet)$ and from the above lemma the desired result follows.
\end{proof}
The next example illustrates that the assumption for $S_\bullet$ to be finitely generated is indeed necessary.
\begin{ex}
Let $S_\bullet$ be an arbitrary graded linear series such that $\vol(S_\bullet)>0$. Then choose any point $x\in X\setminus \bs(S_\bullet)$ and consider the graded linear series $S^x_\bullet$ defined by
\begin{align}
S_k^x:=\{s\in S_k \ :\ s(x)=0\}.
\end{align}
Clearly, we have $\bs(S_\bullet)\neq \bs(S^x_\bullet)$ since $x$ is not contained in the first set but is contained in the latter by construction. We will nevertheless show that $\vol(S_\bullet)=\vol(S^x_\bullet)$,  and can therefore conclude that $S^x_\bullet$ is never finitely generated, even though $S_\bullet$ might be.

In order to prove the equality of volumes, we first show equality of Newton-Okounkov bodies. Let $Y_\bullet$ be any admissible flag.
We surely have an inclusion $\Delta_{Y_\bullet}(S^x_\bullet)\subseteq \Delta_{Y_\bullet}(S_\bullet)$.
Let now $P$ be a point in $\Delta_{Y_\bullet}(S_\bullet)$. Using the fact that the valuation points in the Newton-Okounkov body are dense (see Theorem \ref{thmvalrat}), there is a series of sections $(\xi^{m_i})_{i\in \mathbb{N}}$ such that $\xi^{m_i}\in S_{m_i}$ and
\begin{align}
\frac{\nu_{Y_\bullet}(\xi^{m_i})}{m_i}  \to P \quad \text{ as } m_i  \to \infty.
\end{align}

Now let $s\in S_k$ be any section such that $s(x)=0$. The existence of such a section follows from the fact that $\vol(S_\bullet)>0$.
Suppose otherwise that no section $s\in S_k$ vanishes at $x\in X$, then $\Delta_{Y_\bullet}(S_\bullet)=\{0\}$ is the origin for all flags $Y_\bullet$ centered at $\{x\}$.

Consider the series $(s\otimes \xi^{m_i})_{i\in \mathbb{N}}$ for which we have:
\begin{align}
\frac{\nu_{Y_\bullet}(s\otimes \xi^{m_i})}{k+m_i}= \frac{\nu_{Y_\bullet}(s)}{k+m_i} + \frac{ \nu_{Y_\bullet}(\xi^{m_i})}{k+m_i}  \to P \quad \text{ as } m_i  \to \infty.
\end{align}
But since $s\otimes \xi^{m_i} \in S^x_{m_i+k}$ we conclude the equality of the Newton-Okounkov bodies $\Delta_{Y_\bullet}(S^x_\bullet)=\Delta_{Y_\bullet}(S_\bullet)$.
In order to derive the equality of volumes, we will show that for both graded linear series the group generated by the semigroup of valuation points coincide. We trivially have an inclusion $G(\Gamma(S^x_\bullet))\subseteq G(\Gamma(S_\bullet))$. Now let $a\in G(\Gamma(S_\bullet))$ be an arbitrary element. We can write it as
\begin{align}
a= (\nu_{Y_\bullet}(\xi^1),k_1)- (\nu_{Y_\bullet}(\xi^2),k_2)
\end{align}
for some $\xi^i\in S_{k_i}$, $i=1,2$.
Choose again a section $s\in S_k$ such that $s(x)=0$ and note that $s\otimes \xi^i \in S^x_{k_i+k}$ for $i=1,2$. Then we can write
\begin{align}
a^\prime:&=(\nu_{Y_\bullet}(s\otimes \xi^1),k+k_1) -(\nu_{Y_\bullet}(s\otimes \xi^2),k+k_2) \\
&= (\nu_{Y_\bullet}(s),k)+ (\nu_{Y_\bullet}(\xi^1),k_1) -( ( \nu_{Y_\bullet}(s),k) + (\nu_{Y_\bullet}(\xi^2),k_2)) \\ 
&= a. 
\end{align}
But $a^\prime\in G(\Gamma(S^x_\bullet))$ which implies the equality of both groups and in particular the equality of both indices $\ind(S^x_\bullet)=\ind(S_\bullet)$. Applying the volume formula \eqref{volkk}, we get the desired equality of volumes.
\end{ex}

\subsection{Characterization of finitely generated graded linear series with full volume}
In this paragraph we want to classify all finitely generated graded linear series $S_\bullet$ corresponding to a big divisor $D$ such that $\vol(S_\bullet)=\vol(D)$.

We will need the following lemma.
\begin{lem}\label{lemfinbir}
Let $f\colon X\to Y$ be a dominant finite morphism of varieties. Then $f$ is of degree one if and only if $f$ is birational.
\end{lem}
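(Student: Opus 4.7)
The plan is to unwind the definitions and reduce the statement to a tautology at the level of function fields. For a dominant morphism $f\colon X\to Y$ of integral varieties over $\mathbb{C}$, the pullback $f^*$ provides a field extension
\begin{align}
f^*\colon K(Y) \hookrightarrow K(X),
\end{align}
and, under the assumption that $f$ is finite, this extension is finite of degree equal (by definition) to $\deg(f)=[K(X):K(Y)]$. This is the only nontrivial fact I need; everything else is formal.

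With this in hand the proof becomes symmetric. In one direction, assume $f$ is birational. By definition this means there exist nonempty open subsets $U\subseteq X$ and $V\subseteq Y$ such that $f|_U\colon U\to V$ is an isomorphism, and such an isomorphism induces an isomorphism $K(V)\cong K(U)$, i.e.\ $K(Y)\cong K(X)$ via $f^*$. Hence $\deg(f)=[K(X):K(Y)]=1$. In the other direction, assume $\deg(f)=1$. Then $f^*\colon K(Y)\to K(X)$ is an isomorphism of fields. It is a standard fact (e.g.\ Hartshorne I.4.4) that an isomorphism of function fields of integral varieties lifts to a birational map between the varieties, and in our situation the map in question is nothing other than $f$ itself. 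Concretely, $f$ restricted to a suitable nonempty open subset of $X$ becomes an isomorphism onto its image, so $f$ is birational.

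Since neither direction requires more than chasing definitions through $f^*$, I do not expect a genuine obstacle; the only point where one has to be careful is invoking the correct notion of the degree for a finite dominant morphism, namely the degree of the induced finite extension of function fields. This is the identification that makes the two statements manifestly equivalent.
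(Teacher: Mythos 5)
Your proposal is correct. The forward direction (birational $\Rightarrow$ degree one) is the same as the paper's: pass to function fields via $f^*$. The reverse direction, however, takes a genuinely different route. The paper argues by hand: it takes an affine chart $V=\spec A$ with $f^{-1}(V)=\spec B$, uses finiteness to pick module generators $b_1,\dots,b_n$ of $B$ over $A$, pulls them back through the isomorphism $\fract(A)\cong\fract(B)$, and localizes at the product of the resulting denominators to exhibit an explicit open set on which $f$ restricts to an isomorphism. You instead invoke the equivalence of categories between varieties with dominant rational maps and finitely generated field extensions (Hartshorne I.4.4): since $f^*$ is an isomorphism of fields, $f$ is an isomorphism in the rational category, i.e.\ birational. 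One small point of care: the corollary-level statement ``$K(X)\cong K(Y)$ implies $X$ and $Y$ are birationally equivalent'' would only give you \emph{some} birational map, so you do need the full functorial (equivalence-of-categories) form of the theorem to conclude that the birational map is $f$ itself --- which you correctly note. Your argument is shorter and in fact shows that finiteness is irrelevant to this implication (it is only used to make ``degree'' well defined); the paper's argument is more self-contained and produces the distinguished open set explicitly, which is in the spirit of how the lemma is later applied.
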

\begin{proof}
If $f$ is birational, then there are open subsets $U=\spec B\subseteq X$ and $V=\spec A\subseteq Y$ such that $f_{|U}\colon U\to V$ is an isomorphism. This means that $A\cong B$ and thus $\mathbb{C}(X)=\fract(B)\cong \fract(A)=\mathbb{C}(Y)$. Hence, $f$ is of degree one.

Now let $f$ be finite of degree one. Consider an open affine subscheme $V=\spec A\subseteq Y$ such that $f^{-1}(V)=U=\spec B$. Then $f_{|U}\colon \spec B\to \spec A$ corresponds to an injective morphism of rings $\phi \colon A\to B$. Since $f$ has degree one, $\phi$ induces an isomorphism 
\begin{align}
\fract(\phi)\colon \fract(A)\to \fract(B).
\end{align}
Suppose $b_1,\cdots, b_n\in B$ is a set of generators of $B$ as an $A$-module. Let $a^\prime_1,\dots, a^\prime_n\in A$ be the set of denominators of the preimages of the $b_i$ under the isomorphism $\fract(\phi)$. Let $a^\prime=a^\prime_1\cdot \dots \cdot a^\prime_n$ be the product of the denominators and $A_{a^\prime}$ be the corresponding localization.
Next, we consider the morphism $\fract(\phi)$ restricted to the $A$-module $A_a$
\begin{align}
 \fract(\phi)_{|A_{a^\prime}} \colon A_{a^\prime}\to \fract (B).
\end{align} 
By construction, this restriction gives an isomorphism of $A_{a^\prime}$ to its image which is exactly $B_{a^\prime}$.
Applying the $\spec$ functor again gives  an isomorphism of schemes 
\begin{align}
f_{|U^\prime}\colon U^\prime=\spec B_{a^\prime} \to V^\prime=\spec (A_{a^\prime})=\text{D}(a^\prime).
\end{align}
Hence, $f$ is a birational morphism.
\end{proof}

\begin{thm} \label{thmcharempty}
Let $S_\bullet\subseteq T_\bullet$ be  finitely generated graded linear series corresponding to $D$. Let $T_\bullet$ be birational. Then the following two conditions are equivalent
\begin{enumerate}
\item $\vol(S_\bullet)=\vol(T_\bullet)$.
\item 
\begin{itemize}
\item The rational map $h_{S_\bullet}\colon X \dashrightarrow \proj (S_\bullet) $ is birational and
\item $\bs(S_\bullet)=\emptyset $ on $\proj(T_\bullet)$.
\end{itemize}
\end{enumerate}
\end{thm}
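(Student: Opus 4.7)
The plan is to exploit the rational map
\begin{align}
\pi\colon Y := \proj(T_\bullet) \dashrightarrow Z := \proj(S_\bullet)
\end{align}
induced by the inclusion $S_\bullet \hookrightarrow T_\bullet$, and to match its geometric data with the numerical data on each side. After passing to a common Veronese power I may assume that both algebras are generated in degree one. Then $Y \hookrightarrow \mathbb{P}(T_1^*)$ and $Z \hookrightarrow \mathbb{P}(S_1^*)$ are closed embeddings and $\pi$ is the restriction to $Y$ of the linear projection dual to $S_1 \hookrightarrow T_1$; in particular, the indeterminacy locus of $\pi$ on $Y$ agrees with $\bs(S_\bullet)$ on $\proj(T_\bullet)$.

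For the direction $(b) \Rightarrow (a)$, the vanishing $\bs(S_\bullet)=\emptyset$ on $Y$ promotes $\pi$ to a morphism with $\pi^{*}\mathcal{O}_Z(1)=\mathcal{O}_Y(1)$, hence finite (the pullback of an ample line bundle is ample iff the map is finite). Combining $h_{S_\bullet} = \pi \circ h_{T_\bullet}$ with the birationality of both $h_{T_\bullet}$ (by hypothesis on $T_\bullet$) and $h_{S_\bullet}$ (from (b)) shows that $\pi$ is birational, hence of degree one by Lemma \ref{lemfinbir}. The projection formula then gives
\begin{align}
\vol(T_\bullet) = (\mathcal{O}_Y(1))^d = (\pi^{*}\mathcal{O}_Z(1))^d = \deg(\pi)\cdot (\mathcal{O}_Z(1))^d = \vol(S_\bullet).
\end{align}

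For the direction $(a) \Rightarrow (b)$, birationality of $T_\bullet$ yields $\vol(T_\bullet)>0$ and hence $\vol(S_\bullet)>0$, which forces $\dim Z = d$. Choose a birational resolution $\mu\colon \tilde Y \to Y$ such that $\pi \circ \mu$ extends to a morphism $\tilde\pi\colon \tilde Y \to Z$, and decompose
\begin{align}
\mu^{*}\mathcal{O}_Y(1) = \tilde\pi^{*}\mathcal{O}_Z(1) + E
\end{align}
with $E \geq 0$ the base divisor of the pulled-back linear series $|S_1|$. Monotonicity of the volume under addition of effective divisors, together with the projection formula, gives
\begin{align}
\vol(T_\bullet) = \vol(\mu^{*}\mathcal{O}_Y(1)) \geq \vol(\tilde\pi^{*}\mathcal{O}_Z(1)) = \deg(\tilde\pi)\cdot \vol(S_\bullet),
\end{align}
so the equality $\vol(S_\bullet) = \vol(T_\bullet)$ forces $\deg(\tilde\pi)=1$, i.e.\ $h_{S_\bullet}$ is birational. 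For the remaining assertion $\bs(S_\bullet)=\emptyset$ on $\proj(T_\bullet)$, I apply Theorem \ref{thmvolbase2} to obtain $\bs(S_\bullet)=\bs(T_\bullet)$ inside $X$, and then transport this equality through $h_{T_\bullet}$: since $T_\bullet$ globally generates $\mathcal{O}_Y(1)$ on $\proj(T_\bullet)$ by the very construction of $\proj(T_\bullet)$, any hypothetical base point of $S_\bullet$ on $Y$ would pull back through a resolution of $h_{T_\bullet}$ to a base point of $S_\bullet$ in $X$ outside $\bs(T_\bullet)$, contradicting Theorem \ref{thmvolbase2}.

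The step I expect to be the main obstacle is this final base-locus conversion. Theorem \ref{thmvolbase2} delivers the equality of base loci inside $X$ cleanly, but $h_{T_\bullet}$ is only a rational map and its indeterminacy on $X$ must be resolved carefully to ensure that the pullback of a would-be base point on $Y$ lands in the locus where $h_{T_\bullet}$ is actually defined. The reduction to degree one generation via Veronese, combined with the equality of volumes forcing the base divisor $E$ on $\tilde Y$ to lie in the augmented base locus of $\tilde\pi^{*}\mathcal{O}_Z(1)$, is the mechanism that rules out nontrivial divisorial contributions to $\bs(S_\bullet)$ on $Y$.
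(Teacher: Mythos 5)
Your overall architecture coincides with the paper's: both proofs hinge on the map $\proj(T_\bullet)\to\proj(S_\bullet)$ induced by the inclusion of graded algebras, on Lemma \ref{lemfinbir}, on the degree--volume comparison for the resulting (generically) finite map, and on Theorem \ref{thmvolbase2} for the base-locus half. Your direction $(b)\Rightarrow(a)$ and your derivation of the birationality of $h_{S_\bullet}$ from $\vol(S_\bullet)=\vol(T_\bullet)$ (via a resolution $\tilde Y\to Y$ and monotonicity of the volume under dropping the effective base divisor $E$) are sound; the latter is a mild variant of the paper's argument, which instead first establishes base-point freeness on $Y$, so that $j\colon Y\to Z$ is an honest finite morphism, and then combines Lemma \ref{lemfinbir} with $\vol(j^*\struc_Z(1))=\deg(j)\cdot\vol(\struc_Z(1))$.

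The genuine gap is exactly where you flagged it: the transport of $\bs(S_\bullet)=\bs(T_\bullet)$ from $X$ to $Y=\proj(T_\bullet)$. Resolve $h_{T_\bullet}$ by $p\colon W\to X$, $q\colon W\to Y$, and write $p^*D=q^*\struc_Y(1)+F$ with $F\geq 0$ the fixed part; then for $s\in S_k\subseteq T_k\cong H^0(Y,\struc_Y(k))$ one has $p^*s=q^*s\cdot s_F^{\otimes k}$. If $y\in Y$ is a base point of $S_\bullet$ on $Y$ and \emph{every} point of $q^{-1}(y)$ lies in $\supp(F)$, then for any $w\in q^{-1}(y)$ all sections $p^*t$ with $t\in T_k$ vanish at $w$ as well, so $p(w)\in\bs(T_\bullet)$ and you only produce a point of $\bs(S_\bullet)\cap\bs(T_\bullet)$ on $X$, not the contradiction you need; your closing remark about $E$ lying in an augmented base locus concerns the resolution of $Y\dashrightarrow Z$, not of $X\dashrightarrow Y$, and does not exclude this case. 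The paper sidesteps the transport entirely: using $R(T_\bullet)\cong R_\bullet(Y,\struc_Y(1))$ it regards $S_\bullet$ as a finitely generated graded linear subseries of the complete series of the ample bundle $\struc_Y(1)$ \emph{on $Y$} (the vector spaces $S_k$ are literally unchanged, so the volume is too), and applies Theorem \ref{thmvolbase2} directly on $Y$ to the pair $S_\bullet\subseteq R_\bullet(Y,\struc_Y(1))$, giving $\bs(S_\bullet)=\bs(\struc_Y(1))=\emptyset$ on $Y$ at once. Replacing your transport step by this direct application closes the gap.
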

\begin{proof}
Consider the rational morphism corresponding to the section ring $R(T_\bullet)$
\begin{align}
h_{T_\bullet}\colon X \dashrightarrow \proj (T_\bullet)=:Y.
\end{align}
This is a rational contraction and we have $h_{T_\bullet}^*(\struc_Y(1))=D$ as well as $R(T_\bullet)\cong R_\bullet(Y,\struc_{Y}(1))$. Via this bijection, way we may regard $S_\bullet$ as a finitely generated graded linear subseries of $R_{\bullet}(Y,\struc_{Y}(1))$ on $Y$.
Since $\vol(T_\bullet)=\vol(\struc_{Y}(1))$, we can deduce from Theorem \ref{thmvolbase2}, that $\vol(S_\bullet)=\vol(T_\bullet)$ implies that $S_\bullet$ is base point free on $Y$. Therefore, we just need to show that $(a)$ is equivalent to the first part of $(b)$ under the assumption that $\vol(S_\bullet)=\vol(T_\bullet)$.
So let us assume that $S_\bullet$ is base point free on $Y$.
We want to show that the inclusion $\phi\colon R(S_\bullet)\to R(T_\bullet) $ induces a morphism $Y\to \proj(S_\bullet)=:Z$. Due to \cite[Remark 13.7]{GW}, the previous inclusion gives us a morphism  $G(\phi)\to \proj(S_\bullet)$ where \[G(\phi):=\bigcup_{s\in S_k, k> 0} D_{+}(s)\subseteq Y.\] But by the base point freeness of $S_\bullet$ on $Y$ we have $G(\phi)=Y$.
Hence, we get a globally defined morphism
\begin{align}
j\colon Y\to \proj(S_\bullet)=Z,
\end{align}
which fits into the following commutative diagram
\begin{align}
\xymatrix{ X \ar@{-->}[r]^{h_{T_\bullet}}\ar@{-->}[rd]_{h_{S_\bullet}}& Y\ar[d]^j \\
& Z.}
\end{align}
Since $h_{T_\bullet}$ is birational, the above diagram implies that $j$ is birational if and only if $h_{S_\bullet}$ is birational.
The morphism $j$ is by construction affine and projective and thus finite. By Lemma \ref{lemfinbir} and  taking the above equivalence into account, we get that $h_{S_\bullet}$ is birational if and only if  $j$ is finite of  degree one. But this is equivalent to 
\begin{align}
\vol(T_\bullet)=\vol(j^*\struc_{Z}(1))=\vol(\struc_{Z}(1))=\vol(S_\bullet).
\end{align}
\end{proof}
\begin{cor}
Let $D$ be a  semi ample big divisor on $X$ and $S_\bullet$ be a finitely generated graded linear series corresponding to $D$. Then the following two conditions are equivalent
\begin{enumerate}
\item $\vol(S_\bullet)=\vol(D)$
\item \begin{itemize}
\item $\bs(S_\bullet)=\bs(D)=\emptyset$
\item $h_{S_\bullet}\colon X\to \proj(S_\bullet)$ is birational.
\end{itemize}
\end{enumerate}
\end{cor}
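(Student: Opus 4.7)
The plan is to reduce this corollary to Theorem \ref{thmcharempty} by taking $T_\bullet = R_\bullet(X,D)$, and then to translate the condition ``$\bs(S_\bullet) = \emptyset$ on $\proj(T_\bullet)$'' back to $X$ using the morphism induced by the semi-ample divisor $D$.

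First, I would verify that $T_\bullet := R_\bullet(X,D)$ satisfies the hypotheses of Theorem \ref{thmcharempty}. Since $D$ is semi-ample and big, a standard argument shows that the section ring $R_\bullet(X,D)$ is finitely generated (semi-ampleness means some multiple $mD$ defines a morphism $\phi_{mD}\colon X \to Y_0 \subseteq \mathbb{P}^N$, and finite generation of the section ring follows from this together with the fact that one can take the integral closure in finitely many degrees). Because $D$ is big, the rational map $h_{T_\bullet}$ is birational onto its image, so $T_\bullet$ is a birational graded linear series. Moreover, semi-ampleness immediately gives $\bs(D)=\bs(T_\bullet)=\emptyset$, since by definition $\bs(D)=\bigcap_{m\geq 1}\bs(|mD|)$ and this intersection is empty as soon as one multiple $|mD|$ is base point free.

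Next, I would apply Theorem \ref{thmcharempty} directly: it says that $\vol(S_\bullet) = \vol(T_\bullet) = \vol(D)$ is equivalent to the conjunction of $h_{S_\bullet}\colon X\dashrightarrow \proj(S_\bullet)$ being birational and $\bs(S_\bullet)=\emptyset$ on $Y:=\proj(T_\bullet)$. The last step is to translate the emptiness of the base locus between $Y$ and $X$. Since $D$ is semi-ample, $h_{T_\bullet}\colon X\to Y$ is in fact a \emph{morphism}, and it is surjective by construction. Under the identification $R(T_\bullet) \cong R_\bullet(Y,\mathcal{O}_Y(1))$, a section $s$ of $S_k$ viewed on $X$ is precisely the pullback under $h_{T_\bullet}$ of the corresponding section on $Y$. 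By Lemma \ref{lemnotsmooth} applied to $h_{T_\bullet}$, the base loci satisfy $h_{T_\bullet}(\bs_X(S_\bullet)) = \bs_Y(S_\bullet)$, and since $h_{T_\bullet}$ is surjective this gives the equivalence $\bs_X(S_\bullet)=\emptyset \iff \bs_Y(S_\bullet)=\emptyset$. Once $\bs_X(S_\bullet)=\emptyset$, the rational map $h_{S_\bullet}$ is defined everywhere, hence is an honest morphism $X \to \proj(S_\bullet)$, matching the form asserted in the corollary.

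No part of this is really the ``main obstacle'' — the corollary is essentially a specialization of Theorem \ref{thmcharempty} — but the one subtle point to spell out carefully is the passage from $\bs(S_\bullet)$ on $\proj(T_\bullet)$ to $\bs(S_\bullet)$ on $X$, which crucially uses that semi-ampleness upgrades the rational map $h_{T_\bullet}$ to a genuine surjective morphism so that Lemma \ref{lemnotsmooth} applies.
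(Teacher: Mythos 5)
Your proposal is correct and follows essentially the same route as the paper: the corollary is obtained by applying Theorem \ref{thmcharempty} with $T_\bullet = R_\bullet(X,D)$ and then using Lemma \ref{lemnotsmooth}, via the surjective morphism $h_{T_\bullet}\colon X\to \proj(R_\bullet(X,D))$ afforded by semi-ampleness, to transfer the emptiness of $\bs(S_\bullet)$ between $\proj(R_\bullet(X,D))$ and $X$. Your write-up simply makes explicit the hypotheses verification (finite generation and birationality of $R_\bullet(X,D)$) that the paper leaves implicit.
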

\begin{proof}
The only thing which we need to prove in order to use the above Theorem is that b) implies that $\bs(S_\bullet)=\emptyset$ on $\proj(R_{\bullet}(X,D))$ but this follows from Lemma \ref{lemnotsmooth}.

\end{proof}
\subsection{Volume and base ideal} \label{sectbaseideal}
In this paragraph we want to take the scheme structure of the base locus into account. Hence, we will be interested in the connection between the volume and the base ideal of a graded linear series. The main motivation for this is  \cite[Theorem C]{jow}, which states that we can compute the volume of a birational graded linear series by passing to the base point free linear series on the blow-up along the base ideal. We will give a short introduction into base ideals and then derive some variations of Jow's statement.

Let $|V|$ be a linear series corresponding to a divisor $D$ on $X$.
Let $s_1,\dots, s_N\in V$ be global sections which induce a basis on $|V|$. Then we define $\f$ as the coherent sheaf generated by the $s_i$. More explicitly, $\f$ is the image sheaf of the following morphism:
\begin{align}
\struc_X^N\to \struc_X(D)
\end{align}
which is given on open sets $U\subseteq X$ by 
\begin{align}
(\lambda_1,\dots, \lambda_N)\mapsto \sum_{j=1}^N \lambda_j\cdot s_{j|U}.
\end{align} 
Consider an open cover $X=\bigcup_{i\in I} U_i$ and trivializations $\psi_i \colon \struc_X(D)_{|U_i} \cong \struc_{U_i}$. Then $\psi_i(\f_{|U_i})$ is an ideal of $\struc_{U_i}$, which does not depend on the choice of $\psi_i$. Gluing these ideals together induces a well defined ideal sheaf $\mathcal{I}$ such that $\f=\mathcal{I}\otimes_{\struc_X} \struc_X(D)$. Clearly, the support of $\mathcal{I}$ is $\bl(|V|)$ and we call it the base ideal of $|V|$. This gives  the set $\bl(|V|)$ the structure of a scheme. 
For a graded linear series $S_\bullet$, we denote the base ideal of $|S_k|$ by $\bsi_{S_k}$.
\begin{thm} \label{thmvolbase}

Let $S_\bullet$ and $T_\bullet$ be birational graded linear series corresponding to a divisor $D$.
Suppose $\bsi_{S_k}=\bsi_{T_k}$ for all $k\gg0$. Then
\begin{align}
\vol(S_\bullet)=\vol(T_\bullet).
\end{align}
\end{thm}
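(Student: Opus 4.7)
The strategy is to reduce the statement to Jow's Theorem C (cited above as \cite{jow}), which expresses the volume of a birational graded linear series as an asymptotic invariant of the sequence of base ideals alone. Once this is granted, the hypothesis $\bsi_{S_k}=\bsi_{T_k}$ for $k\gg0$ yields equality of the two volumes essentially for free.

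First I would fix $k_0$ large enough that $\bsi_{S_k}=\bsi_{T_k}=:\bsi_k$ for every $k\geq k_0$. For each such $k$, let $\pi_k\colon \widetilde{X}_k\to X$ be the normalized blow-up along $\bsi_k$, with exceptional Cartier divisor $E_k$ characterized by $\pi_k^{-1}\bsi_k\cdot\struc_{\widetilde{X}_k}=\struc_{\widetilde{X}_k}(-E_k)$. The \emph{mobile part}
\[
M_k \;:=\; k\pi_k^*D-E_k
\]
is then base-point-free on $\widetilde{X}_k$ and depends only on $\bsi_k$, not on whether we started with $S_\bullet$ or $T_\bullet$. Applying the projection formula together with $\pi_{k*}\struc_{\widetilde{X}_k}(-E_k)=\bsi_k$ gives the identification
\[
H^0\bigl(\widetilde{X}_k,\struc_{\widetilde{X}_k}(M_k)\bigr)\;=\;H^0\bigl(X,\bsi_k\otimes\struc_X(kD)\bigr),
\]
so the spaces of sections extracted on the blow-up coincide for the two graded linear series.

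Next I would invoke Jow's Theorem C separately for $S_\bullet$ and $T_\bullet$ to obtain
\[
\vol(S_\bullet)\;=\;\lim_{k\to\infty}\frac{h^0(\widetilde{X}_k,M_k)}{k^d/d!}\qquad\text{and}\qquad\vol(T_\bullet)\;=\;\lim_{k\to\infty}\frac{h^0(\widetilde{X}_k,M_k)}{k^d/d!},
\]
since for $k\geq k_0$ the data $(\widetilde{X}_k,M_k)$ is the same in both cases. Comparing the two limits yields $\vol(S_\bullet)=\vol(T_\bullet)$. The main obstacle is the correct invocation of Jow's result: one has to verify that the birationality hypothesis on $S_\bullet$ (and on $T_\bullet$) is precisely what guarantees that $\dim S_k$ and $h^0(X,\bsi_k\otimes\struc_X(kD))$ have the same leading-order growth, so that the passage to the mobile part on the blow-up does not change the volume. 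Once this asymptotic equality -- the content of Jow's theorem -- is at hand, the argument reduces to the bookkeeping above.
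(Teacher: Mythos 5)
Your proposal is correct and follows essentially the same route as the paper: both reduce the statement to Jow's Theorem C, which expresses $\vol(S_\bullet)$ purely in terms of the blow-ups along the base ideals $\bsi_{S_k}$ and the divisor $D$, so that equal base ideals for $k\gg 0$ force equal volumes. The only cosmetic difference is that the paper quotes the intersection-number form $\vol(S_\bullet)=\lim_k (M_k)^d/k^d$ (via Hisamoto's reformulation), whereas you use $h^0(\widetilde{X}_k,M_k)$ on the blow-up; either way the formula depends only on the shared data $(\bsi_k, D)$.
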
 
\begin{proof}
An alternate formulation of \cite[Theorem C]{jow} is given in  \cite[Proposition 3.7]{H13}. The volume of $S_\bullet$ can be calculated in the following way. Let $\pi_k\colon X_k\to X$ be the blow-up of $X$ along $\bsi_{S_k}$. Let $M_k:=\pi_k^*D-E_k$, where $E_k$ is the exceptional divisor of the blow-up. Then
\begin{align}
\vol(S_\bullet)= \lim_{k\to\infty} \frac{(M_k)^d}{k^d} .
\end{align}
Hence, the volume of $S_\bullet$ just depends on the base ideals $\bsi_{S_k}$ for $k\gg0$ and the divisor $D$. Therefore  we get the same volume for $T_\bullet$ as for $S_\bullet$.
\end{proof}

\subsection{Rationality properties of finitely generated graded linear series}

The volume of a graded linear series  can in general behave very wildly (see for example \cite{KLM13}). Without any restrictions, it certainly can be irrational. Indeed, it is an easy consequence of \cite[Ex. 2.4.14]{laz} that actually all non-negative real numbers occur as the volume of some graded linear series. However, for a finitely generated divisor $D$ it is shown in \cite{AKL12} that there exists a flag $Y_\bullet$ such that the corresponding  Newton-Okounkov body $\Delta_{Y_\bullet}(D)$ is a rational simplex. We will generalize this result to the case of finitely generated birational graded linear series.
For finitely generated graded linear series which are not necessarily birational, we recover the well known fact that its volume is rational
\begin{thm}\label{thmgradvollb}
Let $S_\bullet$ be a birational graded linear series generated in degree one. Let $\pi \colon \tilde{X}\to X $ be the blow-up of $X$ along the base ideal $\bsi_{S_1}$ and $E$ be the exceptional divisor. Let $\tilde{Y}_\bullet$ be an admissible flag of $\tilde{X}$ centered at $\{\tilde{x}\}=\pi^{-1}(x)$ where $x\in X\setminus \bs(S_\bullet)$. Let $Y_\bullet$ be the admissible flag centered at $\{x\} $ given as the image of $\tilde{Y}_\bullet$ under $\pi $. Then  
\begin{align}
\Delta_{Y_\bullet}(S_\bullet)=\Delta_{\tilde{Y}_\bullet}(\pi^*D-E).
\end{align}
\end{thm}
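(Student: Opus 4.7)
The plan is to lift $S_\bullet$ to a graded linear series $\tilde{S}_\bullet$ on $\tilde{X}$ corresponding to the divisor $\pi^*D-E$, show that the Newton--Okounkov body of $\tilde{S}_\bullet$ with respect to $\tilde{Y}_\bullet$ agrees with that of $S_\bullet$ with respect to $Y_\bullet$, and then identify $\Delta_{\tilde{Y}_\bullet}(\tilde{S}_\bullet)$ with $\Delta_{\tilde{Y}_\bullet}(\pi^*D-E)$ via the base ideal criterion of Theorem \ref{thmvolbase} and the volume criterion of Lemma \ref{lemvolok}.

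The first step is the lift. Since $S_\bullet$ is generated in degree one, the base ideal satisfies $\bsi_{S_k}=\bsi_{S_1}^k$, and the evaluation map $\struc_X^{\dim S_k}\to \struc_X(kD)$ has image $\bsi_{S_1}^k\cdot \struc_X(kD)$. Pulling back along $\pi$ and using $\pi^{-1}\bsi_{S_1}\cdot\struc_{\tilde{X}}=\struc_{\tilde{X}}(-E)$, the corresponding map on $\tilde{X}$ factors as an isomorphism onto $\struc_{\tilde{X}}(k(\pi^*D-E))$. Concretely, writing $\sigma_E$ for a defining section of $E$, each $s\in S_k$ determines a unique $\tilde{s}\in H^0(\tilde{X},\struc_{\tilde{X}}(k(\pi^*D-E)))$ with $\pi^*s=\sigma_E^k\cdot \tilde{s}$. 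Setting $\tilde{S}_k:=\{\tilde{s}:s\in S_k\}$ gives a birational graded linear series $\tilde{S}_\bullet\subseteq R_\bullet(\tilde{X},\pi^*D-E)$, generated in degree one, with $\tilde{S}_1$ globally generating $\struc_{\tilde{X}}(\pi^*D-E)$.

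Next I compare the two bodies on $\tilde{X}$. Since $\tilde{S}_\bullet$ is generated in degree one and $\tilde{S}_1$ is base point free, $\bsi_{\tilde{S}_k}=\struc_{\tilde{X}}$ for every $k$; on the other hand, $\pi^*D-E$ being base point free and big gives $\bsi_{H^0(\tilde{X},k(\pi^*D-E))}=\struc_{\tilde{X}}$ for $k\gg 0$. Applying Theorem \ref{thmvolbase} to the inclusion $\tilde{S}_\bullet\subseteq R_\bullet(\tilde{X},\pi^*D-E)$ yields
\begin{align}
\vol(\tilde{S}_\bullet)=\vol(\pi^*D-E),
\end{align}
and both are positive. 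Lemma \ref{lemvolok} then gives $\Delta_{\tilde{Y}_\bullet}(\tilde{S}_\bullet)=\Delta_{\tilde{Y}_\bullet}(\pi^*D-E)$.

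Finally I identify $\Delta_{\tilde{Y}_\bullet}(\tilde{S}_\bullet)$ with $\Delta_{Y_\bullet}(S_\bullet)$. Since $x\notin\bs(S_\bullet)=V(\bsi_{S_1})$, the blow-up $\pi$ is an isomorphism on an open neighborhood $U$ of $\tilde{x}$, the section $\sigma_E$ is a nowhere-vanishing unit on $U$, and by construction $\tilde{Y}_i\cap U$ maps isomorphically to $Y_i$ near $x$. It follows that for each $s\in S_k$ the local computation of $\nu_{\tilde{Y}_\bullet}(\tilde{s})$ at $\tilde{x}$ coincides term by term with the computation of $\nu_{Y_\bullet}(s)$ at $x$, so $\Gamma(\tilde{S}_\bullet)=\Gamma(S_\bullet)$ and therefore $\Delta_{\tilde{Y}_\bullet}(\tilde{S}_\bullet)=\Delta_{Y_\bullet}(S_\bullet)$. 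Combining the two equalities gives the theorem.

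The main obstacle is the first step: one must verify carefully that the assignment $s\mapsto\tilde{s}$ is a well-defined homomorphism of graded $\mathbb{C}$-algebras (so that $\tilde{S}_\bullet$ really is a graded linear series), and that $\bsi_{S_k}=\bsi_{S_1}^k$ holds at the level of ideals rather than merely set-theoretically; once the lift is in place, the volume comparison and the local identification of valuations are essentially formal consequences of $\pi$ being an isomorphism near $\tilde{x}$.
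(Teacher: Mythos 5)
Your proposal is correct and follows essentially the same route as the paper: lift $S_\bullet$ to the base-point-free series $\pi^*S_\bullet-E$ on $\tilde{X}$, use Theorem \ref{thmvolbase} (trivial base ideals) to get $\vol(\pi^*S_\bullet-E)=\vol(\pi^*D-E)$ and hence equality of the bodies on $\tilde{X}$, and identify $\Delta_{\tilde{Y}_\bullet}(\pi^*S_\bullet-E)$ with $\Delta_{Y_\bullet}(S_\bullet)$ via the local isomorphism of $\pi$ near $\tilde{x}$, where $s_E$ is a unit. The only cosmetic difference is the order of the last two steps.
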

\begin{proof}
First of all we show that the volume of both graded linear series are equal. Consider the graded linear series $\pi^*S_\bullet-E$ generated by $\pi^*S_1-E:=\{s/s_E^k \ : \ s\in S_1 \}$ corresponding to the divisor $\pi^*D-E$. By construction,$\pi^*S_\bullet-E$ is base point free and birational. Hence, due to Theorem \ref{thmvolbase}, we have 
\begin{align}
\vol(S_\bullet)=\vol(\pi^*S_\bullet-E)=\vol(\pi^*D-E).
\end{align}

The valuation $\nu_{Y_\bullet}$ respectively $\nu_{\tilde{Y}_\bullet}$ are both defined locally arround $x$ respectively $\tilde{x}=\pi^{-1}(x)$. Since $\pi$ defines an isomorphism arround $\{x\}$, we get $\nu_{Y_\bullet}(s)=\nu_{\tilde{Y}_\bullet}(\pi^*s)$ for all $s\in S_k$. As $E$ lies away from $x$, we also have 
$\nu_{\tilde{Y}_\bullet}(\pi^*s / s_E^k)=\nu_{\tilde{Y}_\bullet}(s)$ for all $s\in S_k$ and $s_E$ a defining section of $E$. This shows that 
\begin{align}
\Delta_{Y_\bullet}(S_\bullet)=\Delta_{\tilde{Y}_\bullet}(\pi^*S_\bullet-E)\subseteq \Delta_{\tilde{Y}_\bullet}(\pi^*D-E).
\end{align}
Combining this with the above equality of volumes, we get the desired result.
\end{proof}

\begin{cor}
Let $S_\bullet$ be  a finitely generated birational graded linear series. Then there is an admissible flag $Y_\bullet$ such that $\Delta_{Y_\bullet}(S_\bullet)$ is a rational simplex.
\end{cor}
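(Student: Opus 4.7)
The plan is to deduce this from the theorem of Anderson--K\"uronya--Lozovanu \cite{AKL12} for finitely generated big divisors, after passing to a smooth birational model on which $S_\bullet$ is essentially realized by such a divisor.

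First I would replace $S_\bullet$ by its $p$-th Veronese subalgebra $S^{(p)}_\bullet$ for $p$ large enough that $S^{(p)}_\bullet$ is generated in degree one; such a $p$ exists since $R(S_\bullet)$ is finitely generated. Passing to the Veronese simply rescales the Newton--Okounkov body by a factor of $p$, so the rational simplex property is preserved. Hence I may assume from the start that $S_\bullet$ is generated in degree one.

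Next I would take a log resolution $\pi\colon \tilde{X}\to X$ of the base ideal $\bsi_{S_1}$, so that $\tilde{X}$ is smooth and $\pi^{-1}\bsi_{S_1}\cdot \struc_{\tilde{X}}=\struc_{\tilde{X}}(-E)$ for an effective divisor $E$. The argument of Theorem \ref{thmgradvollb} carries over to any such resolution (it only uses that $\pi$ is an isomorphism near the center of the flag and that $\pi^*S_\bullet-E$ is base point free and birational), yielding
\begin{align}
\Delta_{Y_\bullet}(S_\bullet)=\Delta_{\tilde{Y}_\bullet}(\pi^*D-E)
\end{align}
for every admissible flag $\tilde{Y}_\bullet$ on $\tilde{X}$ centered at a point $\tilde{x}\notin E$, with $Y_\bullet:=\pi(\tilde{Y}_\bullet)$. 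Moreover $\pi^*S_\bullet-E$ induces a birational morphism $\tilde{h}\colon \tilde{X}\to \proj(S_\bullet)$ with $\tilde{h}^*\struc_{\proj(S_\bullet)}(1)=\pi^*D-E$, so this divisor is big and semi-ample, and in particular has a finitely generated section ring.

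At this point I would invoke the main result of \cite{AKL12}: for the big divisor $\pi^*D-E$ on the smooth projective variety $\tilde{X}$, whose section ring is finitely generated, there exists an admissible flag $\tilde{Y}_\bullet$ such that $\Delta_{\tilde{Y}_\bullet}(\pi^*D-E)$ is a rational simplex. Pushing $\tilde{Y}_\bullet$ down along $\pi$ then yields a flag $Y_\bullet$ on $X$ for which $\Delta_{Y_\bullet}(S_\bullet)$ is a rational simplex. The main obstacle here is to guarantee that the flag produced by \cite{AKL12} can be taken with center $\tilde{x}$ lying in the open set $\tilde{X}\setminus E$ over which $\pi$ is an isomorphism onto its image; this should follow from an inspection of the AKL12 construction, which proceeds by successive restriction to general members of base point free linear systems and therefore allows the center to be chosen off any prescribed proper closed subset.
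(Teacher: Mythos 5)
Your proposal is correct and follows essentially the same route as the paper: reduce (via the Veronese) to generation in degree one, blow up the base ideal so that Theorem \ref{thmgradvollb} identifies $\Delta_{Y_\bullet}(S_\bullet)$ with $\Delta_{\tilde{Y}_\bullet}(\pi^*D-E)$ for flags centered off the exceptional locus, and then invoke \cite[Proposition 7]{AKL12} for the globally generated (hence finitely generated) divisor $\pi^*D-E$. Your extra care in passing to a log resolution to ensure smoothness, and in checking that the AKL12 flag can be centered off $E$, only makes explicit points the paper leaves implicit.
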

\begin{proof}
Using the notation of Theorem \ref{thmgradvollb},
the only thing we need to prove is that there exists an admissible flag $\tilde{Y_\bullet}$ centered at some point $\tilde{x} \not \in E$ such that corresponding Newton Okounkov body of the globally generated divisor $\pi^*D-E$ is a rational simplex. But the existence of such a flag is proven in \cite[Proposition 7]{AKL12}. Note, that we used the fact that $\pi^*D-E$ is finitely generated since it is by construction free.
\end{proof}
If $S_\bullet$ is not birational but still finitely generated, we are not able to prove any rational polyhedrality property of the corresponding Newton- Okounkov body yet. However, the next theorem shows that the volume will nevertheless be rational.
\begin{thm}
Let $S_\bullet$ be a finitely generated graded linear series. Then the volume of $S_\bullet$ is a rational number.
\end{thm}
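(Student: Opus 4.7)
The plan is to reduce the claim to the Hilbert-function asymptotics of the finitely generated graded $\mathbb{C}$-algebra $R := R(S_\bullet)$ and apply asymptotic Riemann--Roch on the projective scheme $Y := \proj R$.

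First, since $R$ is finitely generated, I would choose an integer $n \geq 1$ such that the Veronese subalgebra $R^{(n)} := \bigoplus_m R_{nm}$ is generated in degree one. Then, under the canonical identification $\proj R \cong \proj R^{(n)}$, the twisting sheaf $L := \mathcal{O}_Y(n)$ is an ample line bundle on $Y$. By standard $\proj$-theory the natural map $R_k \to H^0(Y, \mathcal{O}_Y(k))$ is an isomorphism for all $k \gg 0$, so $\dim S_k = h^0(Y, \mathcal{O}_Y(k))$ eventually. Set $d' := \dim Y \leq d$.

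For each residue $0 \leq i < n$, regard $\mathcal{F}_i := \mathcal{O}_Y(i)$ as a coherent sheaf on the irreducible scheme $Y$; since $\mathcal{O}_Y(nm+i) \cong \mathcal{F}_i \otimes L^{\otimes m}$, asymptotic Riemann--Roch for a coherent sheaf twisted by powers of an ample line bundle yields
\begin{align*}
h^0(Y, \mathcal{F}_i \otimes L^{\otimes m}) \;=\; \frac{\operatorname{rk}(\mathcal{F}_i) \cdot (L^{d'})}{d'!}\, m^{d'} + O(m^{d'-1})
\end{align*}
for $m \gg 0$. The leading coefficient is rational because $\operatorname{rk}(\mathcal{F}_i) \in \{0,1\}$ and $L^{d'} \in \mathbb{Z}_{\geq 0}$ is the top self-intersection of an ample line bundle on a $d'$-dimensional projective scheme.

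Combining the estimates over the $n$ arithmetic progressions: if $d' < d$, then $\dim S_k = O(k^{d'}) = o(k^d)$ and therefore $\vol(S_\bullet) = 0 \in \mathbb{Q}$; if $d' = d$, then substituting $k = nm + i$ into the estimate gives $\dim S_k = \frac{L^d}{d!\,n^d}\, k^d + o(k^d)$ along each contributing residue, whence $\vol(S_\bullet) = L^d/n^d \in \mathbb{Q}_{>0}$. The main obstacle, and the reason for passing from line bundles to arbitrary coherent sheaves, is to control the leading quasi-polynomial coefficient of $\dim S_k$ uniformly across residue classes modulo $n$ so that a single rational limit emerges; it is asymptotic Riemann--Roch for coherent sheaves of rank at most one that makes this uniformity transparent.
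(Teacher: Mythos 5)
Your proof is correct, but it takes a genuinely different route from the one in the paper. The paper reduces to a free graded linear series by blowing up the base ideal of $S_1$ (invoking Theorem \ref{thmvolbase}), and then uses the finite morphism $j\colon \proj(R_\bullet(X,D))\to\proj(S_\bullet)$ of some degree $k$ to conclude $\vol(D)=k\cdot\vol(S_\bullet)$, so that rationality is inherited from the free divisor $D$. You instead compute the Hilbert function of $R(S_\bullet)$ directly on $Y=\proj R(S_\bullet)$ via Snapper polynomials; this is precisely the alternative the paper gestures at in the remark following its proof (``its Hilbert polynomial has indeed rational coefficients''), carried out in detail. Your approach is more self-contained: it avoids base ideals, blow-ups, and the volume comparison of Theorem \ref{thmvolbase} (which rests on Jow's theorem), needing only standard $\proj$-theory and asymptotic Riemann--Roch for coherent sheaves; the paper's approach buys the extra geometric information $\vol(S_\bullet)=\vol(D)/\deg(j)$. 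Two steps in your write-up deserve explicit justification. First, the claim that $R_k\to H^0(Y,\mathcal{O}_Y(k))$ is bijective for $k\gg0$ is the saturation statement in large degrees; it holds here because $R(S_\bullet)$ is a domain (so $H^0_{\mathfrak m}(R)=0$) and the cokernel of $R\to\bigoplus_k H^0(Y,\mathcal{O}_Y(k))$ is a finitely generated module with vanishing associated sheaf, hence concentrated in finitely many degrees. Second, the definition $\vol(S_\bullet)=\lim_k \dim S_k/(k^d/d!)$ is sensitive to residue classes modulo $n$ on which $\mathcal{F}_i=0$; as you observe, every contributing residue yields the same leading coefficient $(L^d)/(d!\,n^d)$, which is exactly what makes the limit (taken over the semigroup of degrees with $S_k\neq 0$) exist and equal the rational number $(L^d)/n^d$. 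Neither point is a gap, but both should be spelled out.
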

\begin{proof}
We may without loss of generality assume that $S_\bullet$ is generated in degree one.
The volume of $S_\bullet$ is equal to the volume of the free graded linear series $\pi^*S_\bullet-E$ where $\pi$ is the blow-up of $X$ along the base ideal of $S_1$. Hence, it suffices to show that the volume of a free graded linear series is rational.
So let us assume without loss of generality that $S_\bullet$ is a free finitely generated graded linear series generated in degree one on $X$ corresponding to a base point free divisor $D$ which is also generated in degree one.
If $S_\bullet$ is free on $X$, then by Lemma \ref{lemnotsmooth}, $S_\bullet$ is also free on $\proj(R_{\bullet}(X,D))$. As in the proof of
Theorem \ref{thmcharempty}, we conclude that the inclusion $R(S_\bullet)\subseteq  R_{\bullet}(X,D)$ induces a finite morphism:
\begin{align}
j\colon Y:=\proj(R_{\bullet}(X,D))\to \proj(S_\bullet)=:Z.
\end{align}
Let $k$ be the degree of $j$. Then we have 
\begin{align}
\vol(D)=\vol(\struc_Y(1))=\vol(j^*\struc_Z(1))=k\cdot \vol(\struc_Z(1))=k\cdot \vol(S_\bullet).
\end{align}
So the rationality of $\vol(S_\bullet)$ follows from the rationality of the volume of the free divisor $D$.
\end{proof}
Note, that the fact that the volume of a finitely generated graded linear series, can also be established by realizing that its Hilbert polynomial has indeed rational coefficients.

\subsection{Sheafification of  graded linear series}\label{sectionsheaf}
The aim of this section is to replace a birational graded linear series $S_\bullet$ by a possible larger one coming from global sections of coherent subsheaves of $\struc_X(kD)$. This sheafification process has the desirable feature that it does not change the volume of the graded linear series and hence not the Newton-Okounkov body.

Let $S_\bullet$ be a birational graded linear series. Denote by $\bsi_{S_k}$ the base ideal of the linear series $|S_k|$.

\begin{defi}
The \emph{sheafification} of $\Sb$ is given by the sheaf $\mathcal{S}_\bullet = (\mathcal{S}_k)_{k\geq 0}$ where 
\begin{align}
\mathcal{S}_k=\bsi_{S_k}\otimes_{\struc_X} \struc_X(kD).
\end{align}
The \emph{sheafified linear series} $\tilde{S}_\bullet$ is defined by 
\begin{align}
\tilde{S}_k=H^0(X,\mathcal{S}_k).
\end{align}
\end{defi}
\begin{rem}
The sheaf $\mathcal{S}_k$ is equal to $\f $ considered in  Section \ref{sectbaseideal} for $V=S_k$.
\end{rem}
\begin{thm}\label{thmsheafvol}
Let $\Sb$ be a birational graded linear series. Then the volume of $S_\bullet$ and the volume of the sheafified linear series $\tilde{S}_\bullet$ are equal i.e.
\begin{align}
\vol(\Sb)=\vol ( \tilde{S}_\bullet).
\end{align}
\end{thm}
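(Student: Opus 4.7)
The plan is to apply Theorem \ref{thmvolbase} directly to the pair $(S_\bullet,\tilde S_\bullet)$. To do this I need to check three things: that $\tilde S_\bullet$ is a graded linear series corresponding to $D$, that it is birational, and that $\bsi_{\tilde S_k}=\bsi_{S_k}$ for every $k\geq 1$. The main content of the proof sits in the last of these three checks; the rest is mostly bookkeeping.

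First, I would verify that $\tilde S_\bullet$ is a graded linear series corresponding to $D$. The injection of ideal sheaves $\bsi_{S_k}\hookrightarrow\struc_X$ stays injective after tensoring with the invertible sheaf $\struc_X(kD)$, so $\mathcal S_k\hookrightarrow \struc_X(kD)$ realizes $\tilde S_k$ as a subspace of $H^0(X,\struc_X(kD))$. The multiplicative property $\tilde S_k\cdot\tilde S_l\subseteq \tilde S_{k+l}$ reduces to the ideal inclusion $\bsi_{S_k}\cdot\bsi_{S_l}\subseteq \bsi_{S_{k+l}}$, which follows from $S_k\cdot S_l\subseteq S_{k+l}$: in any local trivialization, generators of $\bsi_{S_k}$ and $\bsi_{S_l}$ come from bases of $S_k$ and $S_l$, and their products give elements of $S_{k+l}$ and hence of $\bsi_{S_{k+l}}$.

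Next, I would note that $S_k\subseteq \tilde S_k$, since each $s\in S_k$ is, in any local trivialization, contained in $\bsi_{S_k}$ by construction of the base ideal. Because $S_\bullet$ is birational, the induced map $h_{|\tilde S_k|}\colon X\dashrightarrow \mP(\tilde S_k^\ast)$ factors the birational map $h_{|S_k|}\colon X\dashrightarrow \mP(S_k^\ast)$ through a linear projection; writing $\deg h_{|S_k|}=\deg(\pi)\cdot \deg h_{|\tilde S_k|}$ with all degrees positive integers forces $\deg h_{|\tilde S_k|}=1$ for $k\gg 0$. So $\tilde S_\bullet$ is birational.

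The heart of the argument is the equality $\bsi_{\tilde S_k}=\bsi_{S_k}$. The inclusion $\bsi_{S_k}\subseteq \bsi_{\tilde S_k}$ is immediate from $S_k\subseteq \tilde S_k$. For the reverse inclusion, I would pick a basis $t_1,\dots,t_M$ of $\tilde S_k$ and form the image of the evaluation map $\struc_X^M\to \struc_X(kD)$, twisted by $-kD$, which by definition is $\bsi_{\tilde S_k}$. But every $t_j$ comes from a global section of $\bsi_{S_k}\otimes \struc_X(kD)$, so in any local trivialization $\psi\colon \struc_X(kD)|_U\cong \struc_U$ the element $\psi(t_j|_U)$ lies in $\bsi_{S_k}|_U$. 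Hence $\bsi_{\tilde S_k}\subseteq \bsi_{S_k}$, giving equality.

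With all three hypotheses verified, Theorem \ref{thmvolbase} applied to $S_\bullet\subseteq \tilde S_\bullet$ concludes $\vol(S_\bullet)=\vol(\tilde S_\bullet)$. I expect the subtle point to be the base ideal equality: one has to see that passing to all global sections of $\bsi_{S_k}\otimes \struc_X(kD)$ cannot enlarge the base ideal, which is really a statement about how the sheaf $\mathcal S_k$ encodes exactly the vanishing condition cut out by $\bsi_{S_k}$.
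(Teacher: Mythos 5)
Your proposal is correct and follows the same route as the paper: both apply Theorem \ref{thmvolbase} after showing $\bsi_{\tilde S_k}=\bsi_{S_k}$, with the reverse inclusion coming from the observation that every global section of $\mathcal S_k=\bsi_{S_k}\otimes\struc_X(kD)$ locally lies in $\bsi_{S_k}$. You simply spell out more of the bookkeeping (multiplicativity and birationality of $\tilde S_\bullet$) that the paper leaves implicit.
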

\begin{proof}
This is a consequence of Theorem \ref{thmvolbase}. The sheaf $\mathcal{S}_k=\bsi_{S_k}\otimes_{\struc_X} \struc_X(D)$ is by construction globally generated and therefore the base ideal of $\tilde{S}_k$ is equal to $\bsi_{S_k}$.
Thus, the base ideal of $\tilde{S}_k$ and of $S_k$ are both equal for every $k>0$, from which  the equality of volumes follows.
\end{proof}

\begin{cor} \label{sheafvol}
Let $\Sb$ be a birational graded linear series. Then for all admissible flags $Y_\bullet$ we have:
\begin{align}
\Delta_{Y_\bullet}(S_\bullet)=\Delta_{Y_\bullet}(\tilde{S}_\bullet)
\end{align}
\end{cor}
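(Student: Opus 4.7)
The plan is to combine the volume equality from Theorem \ref{thmsheafvol} with the general Lemma \ref{lemvolok}. The key point to verify is that $S_\bullet$ embeds in $\tilde{S}_\bullet$, and that both have positive volume, so that Lemma \ref{lemvolok} can be applied to conclude that their Newton-Okounkov bodies coincide for every admissible flag.

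First I would establish the inclusion $S_\bullet\subseteq \tilde{S}_\bullet$. By the very definition of the base ideal $\bsi_{S_k}$ in Section \ref{sectbaseideal}, the sheaf $\mathcal{S}_k=\bsi_{S_k}\otimes_{\struc_X}\struc_X(kD)$ is precisely the image subsheaf of $\struc_X(kD)$ generated by the sections in $S_k$. Hence each $s\in S_k$ gives a global section of $\mathcal{S}_k$, so that $S_k\subseteq H^0(X,\mathcal{S}_k)=\tilde{S}_k$. The multiplicative property $\tilde{S}_k\cdot \tilde{S}_l\subseteq \tilde{S}_{k+l}$ is immediate from $\bsi_{S_k}\cdot \bsi_{S_l}\subseteq \bsi_{S_{k+l}}$, so $\tilde{S}_\bullet$ is indeed a graded linear series containing $S_\bullet$.

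Next I would argue that $\vol(S_\bullet)>0$. Since $S_\bullet$ is birational, the map $h_{|S_k|}\colon X\dashrightarrow \mathbb{P}^{N_k}$ is birational onto its image for $k\gg 0$, so the image has dimension $d=\dim X$, which by a standard argument forces $\dim S_k$ to grow like a positive multiple of $k^d$ and hence $\vol(S_\bullet)>0$. By Theorem \ref{thmsheafvol} we also have $\vol(\tilde{S}_\bullet)=\vol(S_\bullet)$, so both volumes are positive and equal.

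Finally, I would invoke Lemma \ref{lemvolok} applied to the inclusion $S_\bullet\subseteq \tilde{S}_\bullet$: equality of nonzero volumes forces equality of Newton-Okounkov bodies for every admissible flag, yielding
\begin{align}
\Delta_{Y_\bullet}(S_\bullet)=\Delta_{Y_\bullet}(\tilde{S}_\bullet).
\end{align}
There is no real obstacle here; the only subtle point is the harmless check that $S_\bullet$ sits inside $\tilde{S}_\bullet$ as graded linear series, which follows directly from the construction of the base ideal. The heavy lifting has already been done in Theorems \ref{thmvolbase} and \ref{thmsheafvol} and in Lemma \ref{lemvolok}.
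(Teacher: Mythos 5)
Your proposal is correct and follows exactly the route the paper intends: the corollary is stated with no written proof because it is the immediate combination of Theorem \ref{thmsheafvol} (equality of volumes) with Lemma \ref{lemvolok} (equal nonzero volumes of nested graded linear series force equal Newton-Okounkov bodies for every flag). Your additional checks --- that $S_\bullet\subseteq\tilde{S}_\bullet$ by construction of the base ideal and that birationality gives $\vol(S_\bullet)>0$ --- are exactly the hypotheses of Lemma \ref{lemvolok} and are verified correctly.
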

\qed

\begin{ex}
Consider 
\[X=\mathbb{P}^2=\proj \mathbb{C}[X_1,X_2,X_3]\] and $L=\struc(2)$. We can identify \[H^0(X,L)\cong \mathbb{C}[X_1,X_2,X_3]_2.\] Now let $S_\bullet$ be the graded linear series which is generated by all monomials of degree two except $X_2X_3$, i.e. generated by
\[
S_1:= span_{\mathbb{C}}( X_1^2,X_2^2, X_3^2, X_1X_2, X_1X_3)
\]
 Clearly $S_\bullet$ is base point free and $\Delta_{Y_\bullet}(D)=\Delta_{Y_\bullet}(S_\bullet)$, for the standard flag $Y_\bullet$ such that 
 \[Y_1:=\text{V}(X_1), 
Y_2:= \text{V}(X_1)\cap \text{V}(X_2) \text{ and } Y_3:= \{ [0:0:1] \}. \]
Next, we want to show that the semigroup $\Gamma(S_\bullet)$ generates $\mathbb{Z}^3$ as a group. 
We have \[v_1:=(1,1,1)=(\nu(X_1X_2),1)\in \Gamma(S_\bullet),\]  \[v_2:=(1,0,1)=(\nu(X_1X_3),1)\in \Gamma(S_\bullet),\] and \[v_3:=(0,2,1)=(\nu(X_2),1)\in \Gamma(S_\bullet).\]
This shows that $e_2:= v_1-v_2\in G(\Gamma(S_\bullet))$, $e_3= v_3-2e_2\in G(\Gamma(S_\bullet))$ and $e_1=v_1-e_2-e_3\in G(\Gamma(S_\bullet))$. Hence,  $G(\Gamma(S_\bullet))=\mathbb{Z}^3$. From this fact and the equality of Newton-Okounkov bodies, we deduce that $\vol(S_\bullet)=\vol(D)$. Applying Theorem \ref{thmvolbase} gives us that $S_\bullet$ is birational and that $\bsi_{S_1}=\struc_X$. Hence, from the above theorem we expect the following equality for the sheafified linear series.
\begin{align}
R(\tilde{S}_\bullet)= R_{\bullet}(X,L).
\end{align}
Indeed, we will show by hand how the missing global section $X_2X_3\in H^0(X,L)$  can be glued locally from sections in $S_1$.

In \[U_i:= \spec \mathbb{C}[X_1/X_i,X_2/X_i, X_3/X_i]\subseteq \mathbb{P}^2 \]
 we have $s_i=(X_2X_3)_{|U_i}= \frac{X_2X_3}{X_i^2}\in H^0(U_i,\mathcal{O}_X)$.

Since $H^0(U_i,\mathcal{S}_1)$ is a $H^0(U_i,\mathcal{O}_{X})$-module, we have
\begin{align}
s_i= \lambda_i \cdot (t_i)_{|U_i} \in H^0(U_i,\mathcal{S}_1)
\end{align}
for $\lambda_i=s_i$ and $t_i=X_i^2$.

\end{ex}

\section{Slice formula for graded linear Series}
Slice formulas are one of the most important tools to analyze the shape of Newton--Okounkov bodies. They are the main ingredient for the characterization of Newton-Okounkov bodies on surfaces \cite[Theorem 6.4]{LM09}, for Jow's Theorem \cite{jow} and for most of the rational polyhedrality properties (see e.g. \cite{AKL12}, or \cite{SS17}).

By a slice formula we mean the following: Let $S_\bullet$ be a graded linear series and $Y_\bullet$ be a flag. A slice of the Newton-Okounkov body $\Delta_{Y_\bullet}(S_\bullet)$ is given by intersecting it with some affine hypersurface $\{t\}\times \mathbb{R}^{d-1}\subseteq \mathbb{R}^{d}$ for $t\geq 0$. We denote this slice by $\Delta_{Y_\bullet}(S_\bullet)_{\nu_1=t}$. Suppose there is a graded linear series $W_\bullet$ on $Y_1$ such that 
\begin{align}
\Delta_{Y_\bullet}(S_\bullet)_{\nu_1=t}= \Delta_{Y^\prime_\bullet}(W_\bullet)
\end{align}
where $Y^\prime_\bullet$ is the restriction of the flag $Y_\bullet$ on $Y_1$.
Then the above equality will be called a slice formula.

Slice formulas are very useful since  the set of all slices $\Delta_{Y_\bullet}(S_\bullet)_{\nu_1=t}$ determines $\Delta_{Y_\bullet}(S_\bullet)$ and thus we are able to reduce the calculation of $\Delta_{Y_\bullet}(S_\bullet)$ to the calculation of Newton-Okounkov bodies of dimension one less. This will give us  in some cases the possibility to argue inductively on the dimension of $X$.

In all cases we will consider, the graded linear series $W_\bullet$ will be a certain \emph{restricted linear series}. Let us quickly recall basic notions of restricted linear series.
Let $S_\bullet$ be a graded linear series on $X$ corresponding to $D$ and let $Y\subseteq X$ be an irreducible subvariety. Let
\begin{align}
\text{rest}_k\colon H^0(X,\struc_{X}(kD))\to H^0(Y,\struc_{Y}(kD))
\end{align}
be the restriction of global sections of $kD$ to $Y$.
Then we define the restricted graded linear series $S_{|Y,\bullet}$ on $Y$, by
\begin{align}
S_{|Y,k}:= \text{rest}_k(S_k).
\end{align}
If $R(S_\bullet)=R_\bullet(X,D)$, we write $R_{\bullet}(X,D)_{|Y}$ for the graded algebra of sections of the restricted linear series of $S_\bullet$.
We will write $\vol_{X|Y}(S_\bullet):=\vol_{Y}(S_{|Y,\bullet})$ for the volume of the restricted linear series, as well as $\vol_{X|Y}(D)$ if $R(S_\bullet)=R_\bullet(X,D)$.
Let $Y_\bullet$ be a flag on $X$. We write 
\begin{align}
\Delta_{X|Y_1}(S_\bullet):= \Delta_{Y^\prime_\bullet}(S_{|Y_1,\bullet})
\end{align}
for $Y^\prime_\bullet$ defined as above.
In a lot of situations we want to assume that $Y_1$ is a Cartier divisor. Therefore, we make the following definition.
\begin{defi}
Let $X$ be a projective variety and $Y_\bullet$ an admissible flag. We call $Y_\bullet$ \emph{very admissible} if $Y_1$ defines a Cartier divisor.
\end{defi}
Note that if $X$ is smooth then every admissible flag is very admissible.

The existence of slice formulas is directly connected to the distribution of valuative points in $\Delta_{Y_\bullet}(S_\bullet)$. 
Therefore, before deriving slice formulas, we will analyze this distribution.

\subsection{Valuation points}
A different construction of the Newton-Okounkov body than the one we mentioned is the following. First on constructs the set of normalized valuations  $\Sigma:=\bigcup_{m>0} 1/m \cdot  \Gamma_m(S_\bullet)$ and then one takes the closed convex hull of this set. This is completely equivalent to the earlier mentioned definition of Newton-Okounkov bodies.
 A priori, it might happen that taking the convex hull destroys a lot of information about $\Sigma$. In this paragraph we will see that this is not the case and indeed it is not even necessary to take the convex hull in the first place.

\begin{defi}
Let $S_\bullet$ be a graded linear series. Let $Y_\bullet$ be an admissible flag. Then a point $P\in \Delta_{Y_\bullet}(D)$ is called a  \emph{valuation point} or \emph{valuative} if there is a section $s\in S_k$ such that $\nu_{Y_\bullet}(s)/k=P$ for some $k\in\mathbb{N}$.
\end{defi}

\begin{lem}
Let $S_\bullet$ be a graded linear series on $X$. Let $Y_\bullet$ be an admissible flag. Let $P$ and $Q$ be two valuative points of $\Delta_{Y_\bullet}(S_\bullet)$. Then all rational points in the line segment $PQ$ are also valuative.
\end{lem}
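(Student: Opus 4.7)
The plan is to produce, for any rational convex combination $R = \lambda P + (1-\lambda)Q$ of two valuative points, an explicit section of some $S_m$ whose normalized valuation equals $R$. Everything else is bookkeeping of indices.

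First I would fix representations $P = \nu_{Y_\bullet}(s)/k$ with $s \in S_k \setminus \{0\}$ and $Q = \nu_{Y_\bullet}(t)/l$ with $t \in S_l\setminus\{0\}$, and write the rational parameter as $\lambda = a/n$ with $0\le a\le n$ integers. The natural candidate section is
\begin{align}
\sigma := s^{al} \otimes t^{(n-a)k} \in S_{alk} \cdot S_{(n-a)lk} \subseteq S_{nkl},
\end{align}
since $alk + (n-a)lk = nkl$. Using the additivity of $\nu_{Y_\bullet}$ on products and dividing by the degree $nkl$,
\begin{align}
\frac{\nu_{Y_\bullet}(\sigma)}{nkl} \;=\; \frac{al\,\nu_{Y_\bullet}(s) + (n-a)k\,\nu_{Y_\bullet}(t)}{nkl} \;=\; \frac{a}{n}\cdot \frac{\nu_{Y_\bullet}(s)}{k} + \frac{n-a}{n}\cdot\frac{\nu_{Y_\bullet}(t)}{l} \;=\; R.
\end{align}

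The only point requiring a word of justification is that $\sigma$ is nonzero, which is needed for $\nu_{Y_\bullet}(\sigma)$ to be defined. This follows because $X$ is a projective variety, hence integral, so the full section algebra $R_\bullet(X,D)=\bigoplus_k H^0(X,\struc_X(kD))$ is a domain; since $s,t$ are nonzero elements of this domain, so is any tensor power product $s^{al}\otimes t^{(n-a)k}$. The boundary cases $a=0$ and $a=n$ just recover $Q$ and $P$ respectively. There is no serious obstacle here; the statement is essentially a direct consequence of the multiplicativity property $\nu_{Y_\bullet}(s\otimes t)=\nu_{Y_\bullet}(s)+\nu_{Y_\bullet}(t)$ listed among the two essential properties of $\nu_{Y_\bullet}$ in the preliminaries.
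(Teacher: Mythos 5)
Your proposal is correct and follows essentially the same route as the paper: both construct the explicit section $s^{al}\otimes t^{(n-a)k}$ (the paper writes it as $s^{ma}\otimes t^{k(b-a)}\in S_{kbm}$) and apply additivity of $\nu_{Y_\bullet}$ on products. Your added remark that the product section is nonzero because $R_\bullet(X,D)$ is a domain is a small point the paper leaves implicit, but it does not change the argument.
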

\begin{proof}
Let $s\in S_k$ and $t\in S_m$ be sections such that $P=\nu_{Y_\bullet}(s)/k$ and $Q=\nu_{Y_\bullet}(t)/m$.
Let $A:= (a/b)\cdot P + (1-a/b)\cdot Q$ for integers $a,b$ such that $0\leq a<b$ be an arbitrary rational point in the line segment $PQ$.
Define the section
\begin{align}
s^{ma}\otimes t^{k(b-a)}\in S_{kbm}.
\end{align}
Then we have
\begin{align}
\frac{\nu_{Y_\bullet}(s^{ma}\otimes t^{k(b-a)})}{kbm} &= ma\cdot \frac{\nu_{Y_\bullet}(s)}{kbm}+ k(b-a)\cdot \frac{\nu_{Y_\bullet}(t)}{kbm}\\
&= a/b\cdot P +(1-a/b)\cdot Q\\
&=A.
\end{align}
 This shows that every rational point in the seqment $PQ$ is valuative.

\end{proof}

\begin{lem}\label{lemval}
Let $S_\bullet$ be a graded linear series on $X$.
Let $Y_\bullet$ be an admissible flag. Let $P_1,\dots, P_n$ be valuative points of $\Delta_{Y_\bullet}(S_\bullet)$. Then all rational points in the relative interior of the convex hull of $P_1,\dots, P_n$ are also valuative.
\end{lem}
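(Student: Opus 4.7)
The plan is to prove this by induction on the number $n$ of valuative points, using the preceding lemma (the $n=2$ case) as both base case and closer. The base cases $n=1$ (trivial) and $n=2$ (previous lemma) are immediate. For the inductive step with $n\geq 3$, assume the result holds for $n-1$ points and let $A$ be a rational point in the relative interior of $\operatorname{conv}(P_1,\dots,P_n)$.

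The first step is to write $A$ as a strictly positive rational convex combination $A=\sum_{i=1}^n \lambda_i P_i$ with $\lambda_i\in \mathbb{Q}_{>0}$ and $\sum \lambda_i =1$. Since each $P_i\in \mathbb{Q}^d$ (valuation vectors divided by integers) and $A\in\mathbb{Q}^d$, the set
\begin{align}
K=\Bigl\{(\lambda_1,\dots,\lambda_n)\in \mathbb{R}^n_{\geq 0} \ :\ \sum \lambda_i = 1,\ \sum \lambda_i P_i =A\Bigr\}
\end{align}
is a rational polytope. The standard characterization of the relative interior of a convex hull of finitely many points states that $A$ lies in the relative interior of $\operatorname{conv}(P_1,\dots,P_n)$ if and only if it admits \emph{some} representation with all $\lambda_i>0$; in particular the face of $K$ on which all $\lambda_i$ are strictly positive is nonempty and open in $K$, hence contains a rational point by density of rationals in a rational affine subspace.

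Next, set
\begin{align}
B:=\sum_{i=1}^{n-1} \frac{\lambda_i}{1-\lambda_n}\, P_i,
\end{align}
so that $A=(1-\lambda_n)B + \lambda_n P_n$. Then $B\in \mathbb{Q}^d$ is a rational convex combination of $P_1,\dots,P_{n-1}$ with all coefficients strictly positive, so $B$ lies in the relative interior of $\operatorname{conv}(P_1,\dots,P_{n-1})$. By the induction hypothesis $B$ is valuative. Since $P_n$ is valuative and $A$ is a rational point on the segment $\overline{BP_n}$, applying the preceding lemma to $B$ and $P_n$ shows that $A$ is valuative, completing the induction.

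The main subtle point is the existence of the rational strictly positive convex combination for $A$; once that is in place, the induction is a one-line reduction to the $n=2$ case. Carathéodory-type representations alone are not enough because they guarantee support on an affinely independent subset, which may omit some $P_i$ and thereby break the inductive bookkeeping, so it is important to use the relative-interior characterization to keep \emph{all} coefficients strictly positive.
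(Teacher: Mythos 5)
Your proof is correct, and it follows the same inductive skeleton as the paper (induct on $n$, base case $n=2$ from the preceding lemma), but the reduction step is genuinely different. The paper argues geometrically: it takes the line through $P_1$ and the given point $P$, lets $Q$ be the second point where this line meets the boundary of the hull, observes that $Q$ is rational and lies on a proper face (hence in the convex hull of at most $n-1$ of the $P_i$, so valuative by induction), and then applies the segment lemma to $P_1$ and $Q$. You instead work in barycentric coordinates: you produce a strictly positive \emph{rational} convex combination $A=\sum\lambda_iP_i$ by noting that the solution set $K$ is a rational polytope whose relative interior is nonempty, has all coordinates positive, and contains rational points, and then you peel off $P_n$ via $A=(1-\lambda_n)B+\lambda_nP_n$. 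Your route is the more robust of the two: it sidesteps the points the paper leaves implicit, namely that the exit point $Q$ is rational (it is the intersection of a rational line with a rational facet hyperplane), that $Q$ lies in the \emph{relative interior} of a face spanned by at most $n-1$ of the points (so that the induction hypothesis literally applies), and the degenerate cases $P=P_1$ or a lower-dimensional hull. What the paper's argument buys is brevity and geometric transparency; what yours buys is that every step is an explicit rational computation, with the only external input being the standard characterization of the relative interior of a finite convex hull as the set of strictly positive convex combinations. Your closing remark about Carath\'eodory is also well taken: a Carath\'eodory representation could drop some $P_i$ and land $B$ on the relative boundary of the smaller hull, which is exactly the failure mode both proofs must avoid.
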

\begin{proof}
We will prove this by induction on the number of points $n$. The case $n=2$ was done in the previous lemma. Let us prove the claim for $n$, assuming that it holds for integers $\leq n-1$. 
Let $P$ be a rational point in the interior of the convex hull.
The induction hypothesis tells us that all rational points on the facets of the convex hull are valuative. Now, consider the line  going through $P_1$ and $P$. This line intersects the boundary of the convex hull in $P_1$ and in one more point, which we will call $Q$. Clearly the point $Q$ is rational and hence, by the induction hypothesis, valuative. Now, by construction, $P$ is in the convex hull of the two valuative points $P_1$ and $Q$. Again, using the induction hypothesis, the point $P$ is valuative.
\end{proof}

\begin{thm}\label{thmvalrat}
Let $S_\bullet$ be a graded linear series on $X$. Let $Y_\bullet$ be an admissible flag. Then all rational points in the relative interior of the Newton-Okounkov body $\Delta_{Y_\bullet}(S_\bullet)$ are valuative.
\end{thm}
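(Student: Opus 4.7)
The plan is to reduce the statement to Lemma \ref{lemval} by producing, for a given rational interior point $P$, a finite collection of valuative points whose convex hull contains $P$ in its relative interior. Let $d'$ denote the dimension of the affine hull of $\Delta_{Y_\bullet}(S_\bullet)$, and let $\Sigma:=\bigcup_{m>0}\frac{1}{m}\Gamma_m(S_\bullet)$ be the set of all normalized valuation points. By the very construction recalled earlier, $\Delta_{Y_\bullet}(S_\bullet)$ is the closed convex hull of $\Sigma$, so $\Sigma$ is dense in $\Delta_{Y_\bullet}(S_\bullet)$ and in particular the affine span of $\Sigma$ coincides with the affine hull of $\Delta_{Y_\bullet}(S_\bullet)$.

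Fix a rational point $P$ in the relative interior of $\Delta_{Y_\bullet}(S_\bullet)$. First I would pick valuative points $Q_0,\dots,Q_{d'}\in\Sigma$ that are affinely independent; such a choice is possible because the affine span of $\Sigma$ has dimension $d'$. Consider the open simplex $T_0:=\mathrm{relint}\,\mathrm{conv}(Q_0,\dots,Q_{d'})$; this is a non-empty open subset of the affine hull of $\Delta_{Y_\bullet}(S_\bullet)$. Next, since $P$ lies in the relative interior of $\Delta_{Y_\bullet}(S_\bullet)$, I can choose a point $R$ in the relative interior which lies in $T_0$ and is close enough to $P$ so that the reflection-type relation $P=\lambda R+(1-\lambda)Q$ holds for some $\lambda\in(0,1)$ and some $Q\in T_0$; equivalently, a small open neighbourhood of $P$ in the affine hull is contained in $\Delta_{Y_\bullet}(S_\bullet)$, and using density of $\Sigma$ I can perturb the $Q_i$'s slightly to new valuative points $P_0,\dots,P_{d'}$ so that $P$ still lies in the relative interior of $\mathrm{conv}(P_0,\dots,P_{d'})$.

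With these $P_0,\dots,P_{d'}\in\Sigma$ chosen, Lemma \ref{lemval} applies verbatim: every rational point in the relative interior of $\mathrm{conv}(P_0,\dots,P_{d'})$ is valuative, and in particular $P$ is valuative. This finishes the argument.

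The only point requiring some care is the perturbation step: one must ensure that the small movement of the $Q_i$ keeps $P$ in the relative interior of the simplex. I expect this to be routine, since the condition of containing $P$ in the relative interior is open in the configuration space of $(d'+1)$-tuples, and density of $\Sigma$ in $\Delta_{Y_\bullet}(S_\bullet)$ lets us realise arbitrarily small perturbations by valuative points. No deeper algebraic input is needed beyond Lemma \ref{lemval} and the definition of $\Delta_{Y_\bullet}(S_\bullet)$ as the closed convex hull of $\Sigma$.
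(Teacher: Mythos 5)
There is a genuine gap, and it sits at the very first step: you claim that because $\Delta_{Y_\bullet}(S_\bullet)$ is the closed convex hull of $\Sigma=\bigcup_{m>0}\tfrac1m\Gamma_m(S_\bullet)$, the set $\Sigma$ itself is dense in $\Delta_{Y_\bullet}(S_\bullet)$. That inference is false in general (for $\Sigma=\{0,1\}\subseteq\mathbb{R}$ the closed convex hull is $[0,1]$, in which $\Sigma$ is not dense): only $\conv(\Sigma)$ is dense in its closure. Worse, the density of the valuation points in the Newton--Okounkov body is essentially the content of the theorem you are proving --- the paper itself later cites Theorem \ref{thmvalrat} precisely as the statement ``the valuation points are dense'' --- so invoking it here is circular. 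The subsequent perturbation step inherits this problem and is also not coherent on its own terms: the simplex $\conv(Q_0,\dots,Q_{d'})$ is fixed in advance and may be located far from $P$, so no ``slight'' perturbation of its vertices (even granting that nearby valuative points exist) can bring $P$ into its relative interior; and choosing ``$R\in T_0$ close to $P$'' is impossible unless $T_0$ already meets a neighbourhood of $P$.

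The missing idea is an elementary convexity fact that makes all of this unnecessary: for any convex set $C$ one has $\operatorname{relint}(\overline{C})=\operatorname{relint}(C)$. Applied to $C=\conv(\Sigma)$, this shows that a point $P$ in the relative interior of $\Delta_{Y_\bullet}(S_\bullet)=\overline{\conv(\Sigma)}$ already lies in $\conv(\Sigma)$ itself, i.e.\ $P=\sum_{i=1}^N\lambda_i\,\nu_{Y_\bullet}(s_i)/m_i$ is a \emph{finite} convex combination of valuative points; discarding the terms with $\lambda_i=0$, the point $P$ lies in the relative interior of the convex hull of the remaining finitely many valuative points, and Lemma \ref{lemval} applies directly. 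This is exactly the paper's argument (which additionally raises the $s_i$ to powers so as to place them all in a common degree $K$). Your overall strategy --- reduce to Lemma \ref{lemval} by exhibiting finitely many valuative points whose convex hull contains $P$ in its relative interior --- is the right one, but the way you produce those points does not work as written.
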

\begin{proof}
Let $P$ be a rational point in the interior of $\Delta_{Y_\bullet}(S_\bullet)$. By construction of $\Delta_{Y_\bullet}(S_\bullet)$ this means that
\begin{align}
P\in \conv \left(\bigcup_{m>0}  1/m\cdot \nu_{Y_\bullet}(S_m\setminus\{0\}) \right).
\end{align}
So there are finitely many sections $s_i\in S_{m_i}$ and coefficients $\lambda_i\geq 0$ for $i=1,\dots, N$ such that $\sum_{i=1}^N \lambda_i=1$ and 
\begin{align}
P= \sum_{i=1}^{N} \lambda_i \cdot \nu_{Y_\bullet}(s_i)/m_i.
\end{align}
Define $K:=\prod_{i=1}^N m_i$. Then $s_i^{K/m_i}\in S_K$ and we can write
\begin{align}
P=\sum_{i=1}^{N} \lambda_i \cdot \nu_{Y_\bullet}(s_i^{K/m_i})/K.
\end{align}
This implies that \[
P\in \conv \left( 1/K\cdot \nu_{Y_\bullet}(S_K\setminus\{0\})\right),\]
 and due to the previous lemma it follows that $P$ is valuative.

\end{proof}
\begin{rem}
The fact that the valuative points are dense in the Newton-Okounkov body was already proven in \cite{KL14}  for the divisorial case $S_\bullet=R_{\bullet}(X,D)$ and in the general case of graded linear series in \cite[Lemma 2.6]{KMS12}.
\end{rem}
\begin{cor}
Let $D$ be a big divisor on $X$ and let $Y_\bullet$ be a very admissible flag such that $Y_1\not \subseteq \bsp(D)$. Then all the rational points in the relative interior of $\Delta_{Y_\bullet}(D)_{\nu_1=0}$ are valuative.
\end{cor}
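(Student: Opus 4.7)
The plan is to reduce the statement to Theorem \ref{thmvalrat} by passing to the boundary slice $\nu_1=0$ via the classical slice formula for divisors. The hypotheses $Y_\bullet$ very admissible and $Y_1 \not\subseteq \bsp(D)$ are precisely what is needed to invoke the $t=0$ case of the Lazarsfeld--Musta\c{t}\u{a} slice formula \cite[Theorem 4.24]{LM09}, which gives the identification
\begin{align}
\Delta_{Y_\bullet}(D)_{\nu_1=0} \;=\; \Delta_{X|Y_1}\bigl(R_{\bullet}(X,D)_{|Y_1}\bigr),
\end{align}
under the natural identification $\{0\}\times \mathbb{R}^{d-1}\cong \mathbb{R}^{d-1}$.

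Next I would establish the key correspondence between valuation points on either side of this identification. By construction of $\nu_{Y_\bullet}$, a section $s\in H^0(X,\struc_X(kD))$ with $\nu_1(s)=0$ is precisely one whose restriction $\bar s\in H^0(Y_1,\struc_{Y_1}(kD))$ is non-zero, and for such $s$ one has
\begin{align}
\nu_{Y_\bullet}(s) \;=\; \bigl(0,\,\nu_{Y'_\bullet}(\bar s)\bigr),
\end{align}
where $Y'_\bullet$ denotes the induced flag on $Y_1$. Consequently, a rational point $(0,Q)$ in the slice $\Delta_{Y_\bullet}(D)_{\nu_1=0}$ is valuative for $R_{\bullet}(X,D)$ if and only if $Q$ is valuative for the restricted graded linear series $R_{\bullet}(X,D)_{|Y_1}$ on $Y_1$.

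Finally I would apply Theorem \ref{thmvalrat} to the restricted graded linear series $R_{\bullet}(X,D)_{|Y_1}$ on $Y_1$: every rational point in the relative interior of $\Delta_{X|Y_1}(R_{\bullet}(X,D)_{|Y_1})$ is valuative. Combining with the previous paragraph, every rational point $(0,Q)$ in the relative interior of $\Delta_{Y_\bullet}(D)_{\nu_1=0}$ lifts to a section $s\in H^0(X,\struc_X(kD))$ with $\nu_{Y_\bullet}(s)/k=(0,Q)$, and so is valuative for the original graded linear series. The only subtle point is verifying that the relative interior of the slice corresponds under the slice formula identification to the relative interior of $\Delta_{X|Y_1}$; this is immediate since the identification is an affine isomorphism of convex bodies. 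No major technical obstacle arises beyond correctly invoking the $t=0$ slice formula under the hypothesis $Y_1\not\subseteq\bsp(D)$.
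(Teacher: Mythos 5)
Your proposal is correct and follows essentially the same route as the paper: identify the slice $\Delta_{Y_\bullet}(D)_{\nu_1=0}$ with $\Delta_{X|Y_1}(D)$ via the $t=0$ case of the Lazarsfeld--Musta\c{t}\u{a} slice formula, observe that valuative points of the slice correspond exactly to valuative points of the restricted series, and apply Theorem \ref{thmvalrat} to the restricted graded linear series on $Y_1$. No substantive differences from the paper's argument.
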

\begin{proof}
By  construction of the Newton-Okounkov body, the valuative points of the slice $\Delta_{Y_\bullet}(D)_{\nu_1=0}$ are of the form $(0,P)$ where $P$ is a valuative point of the Newton-Okounkov body of the restricted linear series $R_{\bullet}(X,D)_{|Y_1}$, which we denote by $\Delta_{X|Y_1}(D)$.
But due to the slice formula in \cite[Theorem 4.24 b]{LM09}  we have an equality
\begin{align}
\Delta_{X|Y_1}(D)= \Delta_{Y_\bullet}(D)_{\nu_1=0}.
\end{align}
Combining this with the above theorem yields  the desired result.

\end{proof}

The fact that some points on the boundary of the Newton-Okounkov body are valuative and some may just be limits of valuative points corresponds to the fact that the semigroup $\Gamma(S_\bullet)$ may be not finitely generated. 
Finite generation of the semigroup is a very pleasant property. It was shown in \cite{A13} that if $\Gamma(S_\bullet)$ is finitely generated then there exists a corresponding flat degeneration of $X$ to the toric variety whose normalization corresponds to the polytope $\Delta_{Y_\bullet}(S_\bullet)$

The connection to the existence of valuative points is given by the following theorem.

\begin{thm}\label{thmvalfin}
Let $S_\bullet$ be a graded linear series,  $Y_\bullet$ be  an admissible flag of $X$ and $\Gamma(S_\bullet)$ be finitely generated. Then all rational points of $\Delta_{Y_\bullet}(S_\bullet)$ are valuative. If $\Delta_{Y_\bullet}(S_\bullet)$ is rational polyhedral, then $\Gamma(S_\bullet)$ is finitely generated if and only if all rational points of $\Delta_{Y_\bullet}(S_\bullet)$ are valuative. 
\end{thm}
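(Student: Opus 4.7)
My plan is to treat the two statements of the theorem separately, with the first following from a clearing-of-denominators argument and the second (under the rational polyhedrality hypothesis) relying on Gordan's lemma combined with a Dickson-type finiteness argument.

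For the first statement I would fix a finite generating set $(w_1,k_1),\ldots,(w_n,k_n)$ of $\Gamma := \Gamma(S_\bullet)$. Since a finitely generated convex cone is closed, $\overline{\operatorname{Cone}(\Gamma)}$ already coincides with the rational polyhedral cone generated by the $(w_i,k_i)$, so $\Delta := \Delta_{Y_\bullet}(S_\bullet) = \operatorname{conv}\{w_i/k_i : k_i>0\}$ is a rational polytope. Given a rational $P\in\Delta$, I would write $P = \sum_i\lambda_i(w_i/k_i)$ with $\lambda_i\in\mathbb{Q}_{\geq 0}$ and $\sum_i\lambda_i = 1$; rearranging yields $(P,1) = \sum_i(\lambda_i/k_i)(w_i,k_i)$, and choosing $N\in\mathbb{N}$ that clears all denominators produces $(NP,N)\in\Gamma$, so $P$ is valuative.

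For the second statement, the direction ``$\Gamma$ finitely generated $\Rightarrow$ all rationals valuative'' is covered by the above. For the converse, assume $\Delta$ is rational polyhedral and every rational point of $\Delta$ is valuative. Set $L := G(\Gamma)\subseteq\mathbb{Z}^{d+1}$ and $C := \overline{\operatorname{Cone}(\Gamma)}$. Gordan's lemma produces semigroup generators $h_1,\ldots,h_R$ of $C\cap L$; since each $h_j=(w_j,k_j)$ yields a rational point $w_j/k_j\in\Delta$, the hypothesis gives $n_j\in\mathbb{N}_{>0}$ with $n_jh_j\in\Gamma$, and setting $N:=\operatorname{lcm}(n_j)$ we obtain $Nh_j\in\Gamma$ for every $j$. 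The sub-semigroup $\Gamma_N := \langle Nh_1,\ldots,Nh_R\rangle\subseteq\Gamma$ then coincides with $N(C\cap L) = C\cap NL$. I define
\begin{align}
A := \bigl\{\gamma\in\Gamma\setminus\{0\} : \gamma-\delta\notin\Gamma\text{ for every }\delta\in\Gamma_N\setminus\{0\}\bigr\};
\end{align}
iterated decomposition (terminating because the last coordinate strictly decreases at each step) shows $\Gamma$ is generated as a semigroup by $A\cup\{Nh_1,\ldots,Nh_R\}$, so the remaining task is to prove $A$ is finite. If $A$ were infinite, the well-partial-order property of $(C\cap\mathbb{Z}^{d+1},\preceq)$ with $x\preceq y\Leftrightarrow y-x\in C$ would produce an infinite $\preceq$-chain in $A$; pigeonhole on the finite quotient $L/NL$ then provides two chain elements $\gamma_i\prec\gamma_j$ in the same $NL$-coset, so that $\gamma_j-\gamma_i\in C\cap NL = \Gamma_N\setminus\{0\}$, contradicting $\gamma_j\in A$.

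The main obstacle I anticipate is establishing the well-partial-order property of $(C\cap\mathbb{Z}^{d+1},\preceq)$. For simplicial $C$ this reduces to Dickson's lemma via the coordinates in the basis of extremal ray generators (whose values lie in $\tfrac{1}{D}\mathbb{N}^{d+1}$ for $D$ the index of the sublattice they span), yielding an injection into a classical well-partially-ordered $\mathbb{N}^{d+1}$. In the general polyhedral case one needs to triangulate $C$ into finitely many simplicial subcones $C_1,\ldots,C_s$ and observe that any $\preceq$-antichain in $C$ restricts to a $\preceq$-antichain in each $C_i$ (because the cone order on $C_i$ is weaker than that on $C$), so an infinite antichain in $C$ would force an infinite antichain in some $C_i$ by pigeonhole, contradicting the simplicial case.
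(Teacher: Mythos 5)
Your proposal is correct, and it diverges from the paper in an instructive way. For the forward implication the two arguments are essentially the same: the paper observes that a finite generating set of $\Gamma(S_\bullet)$ exhibits $\Delta_{Y_\bullet}(S_\bullet)$ as the convex hull of finitely many valuative points and then invokes Lemma \ref{lemval}, whereas you clear denominators directly in the cone; your version is actually slightly cleaner, since Lemma \ref{lemval} as stated only covers the relative interior and your computation handles rational boundary points with no extra case analysis (you are implicitly using that a rational point of a rational polytope admits a \emph{rational} convex combination of the vertices, which is standard but worth a word). The real difference is in the converse: the paper simply cites \cite[Corollary 2.10]{BG09}, while you reprove the needed finiteness from scratch — Gordan's lemma to get the normal monoid $C\cap L$, the valuativity hypothesis to force a cofinal copy $\Gamma_N=C\cap NL$ inside $\Gamma$, and then a Dickson/wpo argument showing $\Gamma$ is generated by $\Gamma_N$ together with the finite set $A$ of elements not reducible modulo $\Gamma_N$. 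This is precisely the mechanism behind the Bruns--Gubeladze result (module-finiteness of $\Gamma$ over a normal affine submonoid), so you gain self-containedness at the cost of length. Two small points to tidy up: you need every nonzero element of $C$ to have positive last coordinate (equivalently, $\Delta$ bounded) both to identify $\Delta$ with $\operatorname{conv}\{w_i/k_i\}$ and to make your reduction terminate — this holds here because $S_\bullet$ is a graded linear series on a projective variety; and when you pass from the valuative point $w_j/k_j$ to a multiple $n_jh_j\in\Gamma$, the section giving valuativity produces $(mw_j/k_j,m)\in\Gamma$ for some $m$ not a priori divisible by $k_j$, so one should multiply once more by $k_j$ inside the semigroup to land on an integer multiple of $h_j$. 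Neither issue affects the validity of the argument.
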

\begin{proof}
Let $\Gamma(S_\bullet)$ be finitely generated and $(\nu_{Y_\bullet}(s_1),k_1),\dots, (\nu_{Y_\bullet}(s_N),k_N)$ be the generators. Then  $\Delta_{Y_\bullet}(S_\bullet)$ is equal to the convex hull of the points $P_1=1/k_1 \cdot \nu_{Y_\bullet}(s_1),\dots, P_N=1/k_N\cdot \nu_{Y_\bullet}(s_N)$. Due to Lemma \ref{lemval}, all points in $\Delta_{Y_\bullet}(S_\bullet)$ are valuative.

 Let now  $\Delta_{Y_\bullet}(S_\bullet)$ be rational polyhedral. It remains to prove that $\Gamma(S_\bullet)$ is finitely generated if all rational points are valuative. However, this follows from  \cite[Corollary 2.10]{BG09}.
\end{proof}
\begin{rem}
Note that in the second part of the above theorem the condition that $\Delta_{Y_\bullet}(S_\bullet)$ is rational polyhedral is necessary. For a counterexample consider \cite[Proposition 1.17]{LM09} for any $K$ which is not rational polyhedral. In this case it is clear from the construction that all rational points are valuative. However, the corresponding semigroup can never be finitely generated unless its Newton-Okounkov body is rational polyhedral.
\end{rem}

The next theorem says that for a surface $X$ the semigroup $\Gamma(D)$ is almost never finitely generated.
\begin{thm}\label{thmfingen}
Let $X$ be a smooth surface.
Let $S_\bullet$ be a graded linear series corresponding to a big divisor $D$ such that $\vol(S_\bullet)=\vol(D)$. Let $C\subseteq X$ be  a curve of genus $g>0$. Then for a general point $x\in C$ the semigroup $\Gamma_{C\supseteq\{x\}}(S_\bullet)$ is not finitely generated.
\end{thm}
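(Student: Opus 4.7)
The plan is to invoke the contrapositive of Theorem \ref{thmvalfin}: if we exhibit a rational point of $\Delta_{Y_\bullet}(S_\bullet)$ (with $Y_\bullet = (X \supseteq C \supseteq \{x\})$) that fails to be a valuative point, then $\Gamma_{Y_\bullet}(S_\bullet)$ cannot be finitely generated. Write $E := D|_C$ and $d := \deg E = (D \cdot C)$; the candidate point is $(0,d)$. Since $S_\bullet \subseteq R_\bullet(X,D)$ and $\vol(S_\bullet) = \vol(D)$, Lemma \ref{lemvolok} yields $\Delta_{Y_\bullet}(S_\bullet) = \Delta_{Y_\bullet}(D)$, and any valuative point for $S_\bullet$ is a fortiori valuative for $R_\bullet(X,D)$. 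It therefore suffices to find a rational point of $\Delta_{Y_\bullet}(D)$ which is not valuative for the full section ring.

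To see $(0,d) \in \Delta_{Y_\bullet}(D)$, assume first that $C \not\subseteq \bsp(D)$ (the contained case is reduced to this one by a small perturbation of $D$). By the slice formula \cite[Theorem 4.24]{LM09}, the slice $\Delta_{Y_\bullet}(D)_{\nu_1 = 0}$ equals the restricted Newton-Okounkov body $\Delta_{X|C}(D) \subseteq \mathbb{R}$, which is a closed interval of length $(D\cdot C) = d$. Alternatively one checks by hand, using asymptotic surjectivity of the restriction $H^0(X,kD) \to H^0(C,kE)$ together with Riemann-Roch on $C$ (producing sections of $kE$ vanishing to order at least $kd - g$ at $x$), that valuative points converge to $(0,d)$, placing it in the closed body.

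The decisive step is the non-valuativity of $(0,d)$. If $(0,d)$ were valuative, there would exist $k \geq 1$ and $s \in H^0(X, kD)$ with $\mathrm{ord}_C(s) = 0$ and $\mathrm{ord}_x(s|_C) = kd$. The nonzero section $s|_C \in H^0(C, kE)$ would then have divisor exactly $kd \cdot x$, giving the linear equivalence $kE \sim kd \cdot x$ on $C$, which is to say that the degree-zero class $E - d\cdot x \in \mathrm{Pic}^0(C) = \mathrm{Jac}(C)$ is $k$-torsion. The morphism $C \to \mathrm{Jac}(C)$, $y \mapsto E - d \cdot y$, is, up to translation, $-d$ times the Abel-Jacobi embedding, and hence non-constant since $g > 0$; its preimage of the countable torsion subgroup $\mathrm{Jac}(C)_{\mathrm{tors}}$ is therefore a countable subset of $C$. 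For $x$ outside this set --- a general point of $C$ in the sense of the theorem --- no such $s$ exists, so $(0,d)$ is not valuative, and Theorem \ref{thmvalfin} yields the result. The main obstacle is precisely this Jacobian step: it is the only place where the positive genus hypothesis enters in an essential way, and it is what forces ``general'' to be understood in the very general sense, i.e.\ as the complement of a countable subset of $C$.
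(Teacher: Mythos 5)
Your overall strategy is the right one and matches the paper's: exhibit a rational point of $\Delta_{Y_\bullet}(S_\bullet)$ that is not valuative and invoke Theorem \ref{thmvalfin}, using $\vol(S_\bullet)=\vol(D)$ to pass to the complete series; and the Jacobian torsion argument is exactly the mechanism behind \cite[Example 1.7]{LM09}, which the paper cites for this step. However, there is a genuine gap in the choice of candidate point. The claim that $\Delta_{X|C}(D)$ is an interval of length $d=(D\cdot C)$ is false for a big divisor that is not nef along $C$: the length of that interval is the \emph{restricted volume} $\vol_{X|C}(D)$, which on a surface equals $(P\cdot C)$ for the Zariski decomposition $D=P+N$ (cf.\ \cite[Corollary 2.17]{ELMNP09}, used in the paper's proof), and $(P\cdot C)<(D\cdot C)$ whenever $N\cdot C>0$. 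Correspondingly, the restriction maps $H^0(X,kD)\to H^0(C,kD|_C)$ are \emph{not} asymptotically surjective for merely big $D$ (every section of $kD$ vanishes along $\lfloor kN\rfloor$), so your ``check by hand'' also fails. Hence $(0,d)$ need not lie in $\Delta_{Y_\bullet}(D)$ at all, and proving its non-valuativity is vacuous. To repair this you must target the actual top of the slice, $(0,\operatorname{ord}_x(N|_C)+(P\cdot C))$, factor out the fixed part $N|_C$ of the restricted series, and run the torsion argument for the class $P|_C-(P\cdot C)x$ instead of $D|_C-d\,x$; this is precisely why the paper introduces the Zariski decomposition and works with $\Delta_{X|C}(P_t)$.

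A second, smaller issue: the case $C\subseteq \bsp(D)$ cannot be ``reduced by a small perturbation of $D$,'' since perturbing $D$ changes the Newton--Okounkov body whose semigroup the theorem is about. The paper handles this by replacing $D$ with a multiple and working at a slice $\nu_1=t$ with $t>0$ chosen so that $\vol_{X|C}(D-tC)>0$; the non-valuative point is then produced at height $t$ rather than at $\nu_1=0$. Your reduction of the problem to the complete series and your identification of where the genus hypothesis and the ``very general'' condition enter are both correct, but as written the proof establishes non-valuativity only of a point that may lie outside the body.
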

\begin{proof}
Without loss of generality, we may replace $D$ by $kD$ and can therefore assume that there is a non-negative integer $t\in \mathbb{N}$  such that $\vol_{X|C}(D-tC)> 0$. Consider the Zariski decomposition of $D-tC$:
\begin{align}
D_t:= D-tC= P_t + N_t,
\end{align}
and choose $x\in C$ very general such that the semigroup $\{(k,\operatorname{ord}_x(s))\ |\ s\in H^0(C,\struc_C(P_{t})) \}$ is not finitely generated \cite[Example 1.7]{LM09}.
 If we set $\Delta_{\{x\}}(P_{t|C})=[0,c]$, then the failure of finite generation just means that $c$ is not a valuative point of $\Delta_{\{x\}}(P_{t|C})$. 
But since $\vol_{X|C}(D_t)=\vol_{X|C}(P_t)>0$, we deduce that $C\not\subseteq \bsp(P_t)$ and we have $\vol_{X|C}(P_t)=\vol(P_{t|C})$ \cite[Corollary 2.17]{ELMNP09}. Thus $c$ is not a valuative point of the restricted Newton-Okounkov body $\Delta_{X|C}(P_t)$. The valuative points of $\Delta_{X|C}(P_t)$ correspond to the valuative points of the restricted Newton-Okounkov body $\Delta_{X|C}(D_t)$ up to a translation of $\operatorname{ord}_x (N_{t|C})$. But each valuative point $Q$  of $\Delta_{X|C}(D_t)$ corresponds one to one to the valuative point $(t,Q)$ of $\Delta_{C\supseteq\{x\}}(D)_{\nu_1=t}$. This shows that $(t,c+\operatorname{ord}_x(N_{t|C}))\in \Delta_{C\supseteq \{x\}}(D)$ is not a valuative point and thus surely it is not a valuative point of $\Delta_{C\supseteq \{x\}}(S_\bullet)$. Applying Theorem \ref{thmvalfin} gives then the desired failure of finite generation.
 
\end{proof}

\begin{ex}

Let $X$ be a smooth Mori dream surface, let $D$ be a big  divisor on $X$ and $Y_\bullet\colon X\supset C\supset \{x\}$ be an admissible flag on $X$ consisting  of a curve $C$ on $X$ which is not contained in $\bsp(D)$.
We use \cite[Theorem B]{KLM12} for describing the Newton-Okounkov body of a  big divisor on a surface:
There  are piecewise linear functions with rational slopes and rational breaking points  $\alpha,\beta \colon [\nu,\mu]\to \mathbb{R}^{+}$ such that the Newton-Okounkov-body is given by:
\begin{align} \label{oksurf}
\Delta_{Y_\bullet}(D)= \{ (t,y)\in \mathbb{R}^2: \nu\leq t\leq \mu, \quad \text{and } \alpha(t)\leq y \leq \beta(t) \}
\end{align}
Moreover, the number $\nu$ is rational and $\mu$ is given by
\begin{align}
\mu:= \sup \{ s>0: D-sC \text{ is big} \}.
\end{align}
Since in our case $X$ is a Mori dream space and thus the Big cone is rational polyhedral, the number $\mu$ is rational as well. In this situation we have a quite good understanding of the valuative points of the Newton-Okounkov body.
The following points are valuative:
\begin{enumerate}

\item the rational points in the interior of $\Delta_{Y_\bullet}(D)$
\item points of the form $(\nu,y)$ for rational $y\in [\alpha(\nu),\beta(\nu))$
\item points of the form $(t,\alpha(t))$ for all rational  $t\in [\nu,\mu)$ .
\end{enumerate}

Let us  prove that all the above listed points are indeed valuative.

Part (a) follows from Theorem \ref{thmvalrat}. Part (b) follows by using the slice formula \cite[Theorem 4.24 b)]{LM09} which states that  $\Delta_{Y_\bullet } (D)_{\nu_1=t}=\Delta_{X|C}(D-tC)$ for all $t\in [\nu,\mu)$. Indeed, for such rational $t$ the valuative points  of the latter Newton-Okounkov body correspond to the valuative points of $\Delta_{Y_\bullet}(D)$ with first coordinate equal to $t$.
Hence, again by  Theorem \ref{thmvalrat}, all the rational points of the form $(\nu,t_1)$ and  for $t_1\in (\alpha(\nu),\beta(\nu))$  are valuative. 
Part (c) follows from the following fact:
Let $S_\bullet$ be a finitely generated graded linear series on a curve $C$ and let $P$ be a smooth point on $C$. Let $\Delta(S_\bullet)=[b,c]$. Then $b$ is a valuative point.
To prove this we can without loss of generality assume that $S_\bullet$ is generated in degree one. Let $s_1,\dots, s_l\in S_1$ be  the generators of $S_\bullet$. Now suppose $b$ is not a valuative point. Then $\nu(s_i)\geq b+1$ for all $i=1,\dots  , l$. Consider $s\in S_k$ which can be written as $s=\sum_{\alpha\in\mathbb{N}^l} c_{\alpha} \overline{s}^\alpha$ where $\overline{s}=(s_1,\dots, s_l)$. Then
\begin{align}
\nu(s)&=\nu(\sum_{\alpha\in\mathbb{N}^l} c_{\alpha} \overline{s}^\alpha)\\
&\geq \min (\nu(\overline{s}^\alpha)\\
&\geq k(b+1)
\end{align}
which implies  that $b$ does not lie in $\Delta(S_\bullet)$ inducing a contradiction. 
Using this fact for the restricted graded linear series of $D-tC$ to $C$ which is indeed finitely generated since $X$ is a Mori dream space, gives us the valuativity of the remaining listed points.
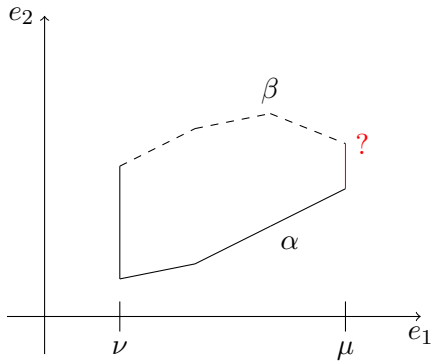
\begin{figure}[h] 
\centering
\begin{tikzpicture}
\draw[->] (-.5,0)--(5,0)node[below]{$e_1$};
\draw[->] (0,-.5)--(0,4) node[left]{$e_2$};

\draw[style=dashed] (1,2)-- (2,2.5);
\draw[style=dashed]  (2,2.5)-- (3,2.7)node[above]{$\beta$};
\draw[style=dashed]  (3,2.7)--(4,2.3);
\draw (1,0.5)--(1,2);
\draw (1,0.5)-- (2,0.7);
\draw (2,0.7)--(3,1.2)node[below right]{$\alpha$};
\draw (3,1.2)-- (4,1.7);

\draw[color=red] (4,1.7)-- (4,2.3)node[right] {$?$};
\draw (1,0.2)--(1,-0.2) node[below] {$\nu$};
\draw (4,0.2)--(4,-0.2) node[below] {$\mu$};
\end{tikzpicture}
\caption{Valuation points of NO-body on a surface}
\label{okounkov}
\end{figure}

If $C$ is a curve of genus $g>0$ and $x$ is a very general point in $C$, then we can say even more.
The points of the form $(t,\beta(t))$ for $t\in [\nu,\mu)$ are not valuative if $\beta(\nu)>\alpha(\nu)$.
If $\alpha(\nu)=\beta(\nu)$, then this holds for $t\in (\nu,\mu)$.
In order to prove this, we make use of the proof of Theorem \ref{thmfingen}. There we showed that for rational $t\in [\mu,\nu)$ such that $\text{vol}(\Delta_{X|C}(D-tC))>0$ for a general choice of $x\in C$ the point $(t,\beta(t))$ is not valuative. Since $t$ varies in a countable set, we conclude that for a very general choice of $x\in C$ this holds for all considered rational $t$ at once.

In this situation the only points where we  do not know whether they are  valuative or not are the rational points of the form $(\mu, y) $ for $y\in [\alpha(\mu),\beta(\mu)]$.
The situation is summarized in Figure \ref{okounkov}.
\end{ex}

\subsection{Slice Formula}
In this paragraph we generalize the slice formula given in \cite[Theorem 4.2.4]{LM09} to graded linear series $S_\bullet$. Let us first state the content of the theorem:
Let $D$ be a big divisor, $Y_\bullet$ be an admissible flag such that $Y_1$ is an effective Cartier divisor for which $Y_1\not\subseteq \bsp(D)$ and $\mu:=\sup \{t\in \mathbb{R}^{+}| (D-tY_1) \text{ is big}\}$. Then we have for all $0\leq t < \mu$
\begin{align}
\Delta_{Y_\bullet}(D)_{\nu_1=t}= \Delta_{X|Y_1}(D-tY_1).
\end{align}

The following definition will be useful for the generalization.
\begin{defi}
Let $S_\bullet$ be a graded linear series on $X$. Let $Y\subseteq X$ be an irreducible subvariety of codimension $1$ which defines a Cartier divisor and $\varepsilon$ be a non-negative rational number. Then we define the graded linear series $S_\bullet-\varepsilon Y$ by setting
\begin{align}
(S_\bullet-\varepsilon Y)_k=\{ s/s_Y^{\lceil \varepsilon\cdot k \rceil} : s\in S_k \quad \operatorname{ord}_{Y}(s)\geq  \lceil \varepsilon\cdot k\rceil \} \subseteq H^0(X, \struc_X(kD-\lceil \varepsilon\cdot k \rceil Y))).
\end{align}
\end{defi}

Using the above definition, we are able to formulate our first slice formula for slices which meet the interior of the corresponding Newton-Okounkov body.
\begin{thm}\label{thmslicepositive}
Let $S_\bullet$ be a graded linear series. Let $Y_\bullet$ be a very admissible flag and $\varepsilon$ a positive rational number such that $\{\varepsilon\}\times\mathbb{R}^{d-1}$ meets the interior of $\Delta_{Y_\bullet}(S_\bullet)$.Then 
\begin{align}
\Delta_{Y_\bullet}(S_\bullet)_{\nu_1=\varepsilon}=\Delta_{X|Y_1}(S_\bullet -\varepsilon Y_1)
\end{align}
via the identification of $\{\varepsilon\}\times \mathbb{R}^{d-1}\cong \mathbb{R}^{d-1}$.
\end{thm}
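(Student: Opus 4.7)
The plan is to use Theorem \ref{thmvalrat}, which says rational points in the relative interior of a Newton--Okounkov body are valuative. The equality can then be reduced to a correspondence between valuative rational points on the two sides, combined with density of such points in each convex body. Write $\varepsilon=a/b$ in lowest terms. The crucial arithmetic observation is that a section $s\in S_k$ can satisfy $\nu_1(s)/k=\varepsilon$ only when $b\mid k$, in which case, writing $k=bm$, we have $\operatorname{ord}_{Y_1}(s)=am=\lceil\varepsilon k\rceil$ exactly. Observe also that the hypothesis that $\{\varepsilon\}\times\mathbb{R}^{d-1}$ meets the interior of the $d$-dimensional body $\Delta_{Y_\bullet}(S_\bullet)$ implies the slice is $(d-1)$-dimensional and its relative interior coincides with $\{\varepsilon\}\times\mathbb{R}^{d-1}\cap\operatorname{int}(\Delta_{Y_\bullet}(S_\bullet))$; in particular rational points in the relative interior of the slice are valuative for $\Delta_{Y_\bullet}(S_\bullet)$ by Theorem \ref{thmvalrat}.

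Next I would set up the bijection. Given $s\in S_k$ with $\nu_{Y_\bullet}(s)/k=(\varepsilon,P)$, and $k=bm$ as above, the section $s/s_{Y_1}^{am}$ lies in $(S_\bullet-\varepsilon Y_1)_k$ and its restriction $\bar t$ to $Y_1$ is nonzero, so it defines an element of $(S_\bullet-\varepsilon Y_1)_{|Y_1,k}$ with $\nu_{Y^\prime_\bullet}(\bar t)/k=P$. Conversely, any nonzero $\bar t\in(S_\bullet-\varepsilon Y_1)_{|Y_1,k}$, after replacing $k$ by a multiple of $b$, lifts to $s/s_{Y_1}^{\varepsilon k}$ for some $s\in S_k$ with $\operatorname{ord}_{Y_1}(s)=\varepsilon k$ exactly (otherwise the restriction would vanish), and then $\nu_{Y_\bullet}(s)/k=(\varepsilon,\nu_{Y^\prime_\bullet}(\bar t)/k)$.

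Each rational point in the relative interior of the slice, being valuative by the previous paragraph, produces via the forward map a valuative point of the closed set $\Delta_{X|Y_1}(S_\bullet-\varepsilon Y_1)$; density of such rational points in the slice then gives the inclusion $\subseteq$. Running the reverse correspondence on rational points in the relative interior of $\Delta_{X|Y_1}(S_\bullet-\varepsilon Y_1)$, which are valuative by Theorem \ref{thmvalrat} applied to the restricted series, produces points of the slice and gives $\supseteq$. The step I expect to be most delicate is not the bijection itself but the geometric verification that the relative interior of the slice coincides with the intersection of the hyperplane $\{\varepsilon\}\times\mathbb{R}^{d-1}$ with $\operatorname{int}(\Delta_{Y_\bullet}(S_\bullet))$; this is exactly where the hypothesis about the hyperplane meeting the interior is used, and it is what allows Theorem \ref{thmvalrat} to be invoked on the left-hand side.
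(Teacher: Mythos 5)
Your proposal is correct in outline and follows essentially the same route as the paper: both arguments reduce the equality to matching up valuative points on the two sides and then invoke the density of rational valuative points in the relative interior (Theorem \ref{thmvalrat}). The one place where the paper is cleaner is the very first step: it passes to the $b$-th Veronese $S^{(b)}_\bullet$ so that $\varepsilon$ becomes an integer, which makes all the ceilings $\lceil\varepsilon k\rceil$ disappear; this is exactly what legitimizes your phrase ``after replacing $k$ by a multiple of $b$'' in the reverse inclusion, since for $b\nmid k$ the spaces $(S_\bullet-\varepsilon Y_1)_k$ sit inside $H^0(X,\struc_X(kD-\lceil\varepsilon k\rceil Y_1))$ rather than in multiples of a fixed divisor, so $(S_\bullet-\varepsilon Y_1)_{|Y_1,\bullet}$ is not literally a graded linear series to which Theorem \ref{thmvalrat} applies, and sections in such degrees lift to valuative points with first coordinate $\lceil\varepsilon k\rceil/k>\varepsilon$ rather than $\varepsilon$. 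Performing the Veronese reduction up front (and rescaling the bodies by $1/b$ at the end) closes this small gap without changing anything else in your argument.
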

\begin{proof}
By considering the $k$-th Veronese $S^{(k)}_\bullet$ of the graded linear series $S_\bullet$ for a high enough multiple, i.e. $S^{(k)}_\bullet$ defined by $S^{(k)}_l=S_{l\cdot k}$, we can without loss of generality assume that $\varepsilon$ is an integer.
We will now show that the rational points in the interior of both Newton-Okounkov bodies are indeed equal, from which the statement will follow by Theorem \ref{thmvalrat}.

Consider first the rational points in the interior of $\Delta_{X|Y_1}(S_\bullet -\varepsilon Y_1)$. By construction these are given in degree $k$ by
\begin{align}
1/k 
\cdot \Gamma_k((S_\bullet -\varepsilon Y_1)_{|Y_1})&=\{1/k\cdot (\nu_2(s),\dots,\nu_d(s))\ |\ s\in S_k \text{ s. t. } \nu_1(s)=\varepsilon \cdot k \}\\
 &\cong  \{1/k\cdot(\varepsilon \cdot k,\nu_2(s),\dots,\nu_d(s))\ | \ s\in S_{k} \text{ s.t. }  \nu_1(s)=\varepsilon \cdot k \}\\
 &= \left(1/k\cdot \Gamma_{k}(S_\bullet)\right)\cap (\{\varepsilon\} \times \mathbb{R}^{d-1}). 
\end{align}
But the last set are just the valuative points of $\Delta_{Y_\bullet}(S_\bullet)_{\nu_1=\varepsilon}$ in degree $k$. This finishes the proof.

\end{proof}
\begin{cor}\label{corslicepos}
Let $S_\bullet$ be a graded linear series. Let $Y_\bullet$ be a very admissible flag and $\varepsilon$ a positive rational number such that $\{\varepsilon\}\times \mathbb{R}^{d-1}$ meets the interior of $\Delta_{Y_\bullet}(S_\bullet)$.Then 
\begin{align}
\Delta_{Y_\bullet}(S_\bullet)_{\nu_1\geq \varepsilon}:=\Delta_{Y_\bullet}(S_\bullet)\cap [\varepsilon,\infty) \times \mathbb{R}^{d-1}= \Delta_{Y_\bullet}(S_\bullet -\varepsilon Y_1)+ (\varepsilon,0,\dots,0).
\end{align}
\end{cor}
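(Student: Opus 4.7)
The strategy is to show both sides are closed convex subsets of $\mathbb{R}^d$ sharing the same rational interior points, then conclude by density of valuative points in the interior (Theorem~\ref{thmvalrat}) together with closedness. Since $\{\varepsilon\}\times\mathbb{R}^{d-1}$ meets the interior of $\Delta_{Y_\bullet}(S_\bullet)$, the left hand side is a $d$-dimensional closed convex body whose interior lies entirely in the open half-space $\{x_1>\varepsilon\}$; the right hand side is closed and convex as the translate of a Newton-Okounkov body.

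First I would reduce to the case $\varepsilon\in\mathbb{Z}$ by passing to a Veronese. Writing $\varepsilon=a/b$ with $\gcd(a,b)=1$, one checks directly from the definitions that $(S_\bullet-\varepsilon Y_1)^{(b)}=S^{(b)}_\bullet-aY_1$ (because $\lceil\varepsilon\cdot lb\rceil=al$), while Veronese rescales every Newton-Okounkov body by a factor of $b$. Hence the statement for $(S_\bullet,\varepsilon)$ is equivalent to the statement for $(S^{(b)}_\bullet,a)$, and from now on I assume $\varepsilon\in\mathbb{Z}$.

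The essential property is that for each $s\in S_k$ with $\nu_1(s)\geq\varepsilon k$, the quotient $s/s_{Y_1}^{\varepsilon k}$ lies in $(S_\bullet-\varepsilon Y_1)_k$ and satisfies
\begin{align}
\nu_{Y_\bullet}\bigl(s/s_{Y_1}^{\varepsilon k}\bigr)=\nu_{Y_\bullet}(s)-(\varepsilon k,0,\ldots,0),
\end{align}
since the coordinates $\nu_2,\ldots,\nu_d$ are computed from the restriction to $Y_1$ of $s/s_{Y_1}^{\nu_1(s)}$, which is insensitive to any extra power of $s_{Y_1}$ divided out beforehand. This sets up a translation-bijection between valuative points of $\Delta_{Y_\bullet}(S_\bullet)$ with first coordinate $\geq\varepsilon$ and valuative points of $\Delta_{Y_\bullet}(S_\bullet-\varepsilon Y_1)$. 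The inclusion ``$\supseteq$'' is then immediate: every valuative point of $\Delta_{Y_\bullet}(S_\bullet-\varepsilon Y_1)+(\varepsilon,0,\ldots,0)$ lies in the left hand side, and closedness together with convexity propagate this to the full closure.

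For ``$\subseteq$'', let $P$ be a rational interior point of the left hand side. By the dimensional remark above, $P_1>\varepsilon$, so $P$ also lies in the interior of $\Delta_{Y_\bullet}(S_\bullet)$ and Theorem~\ref{thmvalrat} gives $P=\nu_{Y_\bullet}(s)/k$ for some $s\in S_k$. As $P_1>\varepsilon$ and $\nu_1(s),\varepsilon k\in\mathbb{Z}$, one has $\nu_1(s)\geq\varepsilon k$, and the essential property yields $P-(\varepsilon,0,\ldots,0)\in\Delta_{Y_\bullet}(S_\bullet-\varepsilon Y_1)$, i.e. $P$ lies in the right hand side. Density of rational interior points in the left hand side and closedness of the right hand side then finish the argument. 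The main technicality is the Veronese reduction together with the bookkeeping that confirms the valuation identity above; once this is in place, the rest is a short convexity argument.
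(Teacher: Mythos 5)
Your proof is correct. The paper's own argument is shorter but less self-contained: it simply observes that for every admissible $\delta>0$ the vertical slice at $\nu_1=\varepsilon+\delta$ of both sides equals $\Delta_{X|Y_1}(S_\bullet-(\varepsilon+\delta)Y_1)$, by applying Theorem \ref{thmslicepositive} once to $S_\bullet-\varepsilon Y_1$ at level $\delta$ and once to $S_\bullet$ at level $\varepsilon+\delta$, and then concludes that two closed convex bodies with identical rational vertical slices coincide. You instead bypass the slice theorem entirely and run the translation bijection $s\mapsto s/s_{Y_1}^{\varepsilon k}$ on valuative points directly in the ambient $\mathbb{R}^d$, using Theorem \ref{thmvalrat} plus the identity $\nu_{Y_\bullet}(s/s_{Y_1}^{\varepsilon k})=\nu_{Y_\bullet}(s)-(\varepsilon k,0,\dots,0)$; this is exactly the computation hiding inside the proof of Theorem \ref{thmslicepositive}, so the mechanism is the same, but your version is ``integrated over all slices'' rather than slice by slice. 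What your route buys is independence from Theorem \ref{thmslicepositive} and an explicit treatment of the bookkeeping (the Veronese reduction, the identification of interiors $\operatorname{int}(A\cap B)=\operatorname{int}(A)\cap\operatorname{int}(B)$ for convex sets, and the closure step) that the paper leaves implicit; what the paper's route buys is brevity, since the slice theorem is already on the table. One small remark: your observation that $\nu_1(s)\in\mathbb{Z}$ is not needed, since $P_1>\varepsilon$ already gives $\nu_1(s)>\varepsilon k\geq \varepsilon k$ directly.
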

\begin{proof}
This follows by realizing that the slices of both sides agree for all rational vertical slices. Indeed, we have for all $\delta>0$ such that 
$\{\delta+\varepsilon\}\times \mathbb{R}^{d-1}$ meets the interior of $\Delta_{Y_\bullet}(S_\bullet)$.
\begin{align}
\Delta_{Y_\bullet}(S_\bullet -\varepsilon Y_1)_{\nu_1=\delta}&=\Delta_{X|Y_1}(S_\bullet-(\varepsilon+\delta)Y_1)\\
 &= \Delta_{Y_\bullet}(S_\bullet)_{\varepsilon+\delta}.
\end{align}
\end{proof}

The above theorem shows that for $t>0$ the slice formula of \cite[Theorem 4.2.4]{LM09} completely generalizes to the case of arbitrary graded linear series without any restrictions. However, the reduction to the case $t=0$ does not work as in \cite{LM09}. The idea of the proof was  to replace the divisor $D$ by some small perturbation $D+\varepsilon Y_1$ and thus reduce the question to the case $t>0$. However, for a graded linear series it is not clear how to generalize this construction. Therefore, we need some additional  properties for the graded linear series $S_\bullet$ in order to recover more of the geometry of $X$ and the corresponding divisor $D$. We would like to assume that $S_\bullet$ as well as the restricted series $S_{|Y,\bullet}$ are birational. In order to make sure that the restricted series has this property, we pose a stronger condition on $S_\bullet$, namely, that it contains an ample series. (This corresponds to condition (C) in \cite{LM09}, see Definition \ref{defample}).

In addition, we will start with the case that  $S_\bullet$ is also  finitely generated and $\vol(S_\bullet)=\vol(D)$.
After that we will reduce the general case to the special case by using Fujita approximation.

\begin{lem}\label{lemprojrest}
Let $S_\bullet\subseteq T_\bullet$ be two birational finitely generated graded linear series such that the  map of projective spectra $\proj(T_\bullet)\to \proj(S_\bullet)$  defined by the inclusion of graded linear algebras $R(S_\bullet)\subseteq R(T_\bullet)$ is globally defined. Then for each closed subvariety $Y\subseteq X$, the induced map $\proj(T_{|Y,\bullet}) \to \proj(S_{|Y,\bullet})$ is also globally defined.
\end{lem}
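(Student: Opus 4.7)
The plan is to realize $\proj(T_{|Y,\bullet})$ and $\proj(S_{|Y,\bullet})$ as closed subschemes of $\proj(T_\bullet)$ and $\proj(S_\bullet)$ respectively, and then to transfer the ``globally defined'' hypothesis from the ambient map down to the restricted one. Since restriction to $Y$ is compatible with the inclusion $S_\bullet \subseteq T_\bullet$, one obtains a commutative square of graded rings whose horizontal arrows are the inclusions $R(S_\bullet) \hookrightarrow R(T_\bullet)$ and $R(S_{|Y,\bullet}) \hookrightarrow R(T_{|Y,\bullet})$ and whose vertical arrows are the surjective restriction maps. Writing $I_T := \ker(R(T_\bullet) \to R(T_{|Y,\bullet}))$ and $I_S := \ker(R(S_\bullet) \to R(S_{|Y,\bullet}))$, commutativity gives $I_S = I_T \cap R(S_\bullet)$, so applying $\proj$ yields closed immersions $\iota_T\colon \proj(T_{|Y,\bullet}) \hookrightarrow \proj(T_\bullet)$ and $\iota_S\colon \proj(S_{|Y,\bullet}) \hookrightarrow \proj(S_\bullet)$, and the inclusion $I_S \subseteq I_T$ ensures that the ambient morphism $j\colon \proj(T_\bullet) \to \proj(S_\bullet)$ precomposed with $\iota_T$ factors scheme-theoretically through $\iota_S$.

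As in the proof of Theorem \ref{thmcharempty}, \cite[Remark 13.7]{GW} tells us that the locus of definition of the induced partial map $\proj(T_{|Y,\bullet}) \dashrightarrow \proj(S_{|Y,\bullet})$ is the open subset
\[G(\phi') := \bigcup_{\bar s \in S_{|Y,k},\, k>0} D_{+}(\bar s) \subseteq \proj(T_{|Y,\bullet}),\]
and the hypothesis is that the analogous open set $G(\phi) \subseteq \proj(T_\bullet)$ is all of $\proj(T_\bullet)$. So one must show that $G(\phi') = \proj(T_{|Y,\bullet})$. Take any $p \in \proj(T_{|Y,\bullet})$, and view it via $\iota_T$ as a homogeneous prime $\mathfrak{p} \subseteq R(T_\bullet)$ containing $I_T$ but not the irrelevant ideal. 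By hypothesis there is $s \in S_k$ for some $k > 0$ with $s \notin \mathfrak{p}$; since $I_T \subseteq \mathfrak{p}$, in particular $s \notin I_T$, so its restriction $\bar s \in S_{|Y,k}$ is a nonzero element of $R(T_{|Y,\bullet})$ not contained in the image prime $\mathfrak{p}/I_T$. Hence $p \in D_+(\bar s) \subseteq G(\phi')$, which is what we wanted.

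I do not expect any real obstacle; the argument is essentially formal. The only bookkeeping subtlety is checking the scheme-theoretic factorization $j\circ\iota_T = \iota_S\circ j'$, which is handled by the inclusion $I_S \subseteq I_T$. Observe also that the proof uses neither the birationality nor the finite generation hypotheses beyond what is already packaged into the existence of the ambient morphism $j$; effectively the lemma says that ``base-point freeness of $S_\bullet$ on $\proj(T_\bullet)$'' automatically descends to ``base-point freeness of $S_{|Y,\bullet}$ on $\proj(T_{|Y,\bullet})$''.
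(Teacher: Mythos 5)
Your proof is correct and follows essentially the same route as the paper: both arguments run a relevant homogeneous prime of $R(T_{|Y,\bullet})$ up through the surjective restriction map $r_T$ to a relevant prime of $R(T_\bullet)$, invoke the global definedness of the ambient map to produce a positive-degree element of $S_\bullet$ outside that prime, and push its restriction back down via the commutative square. The only difference is cosmetic: you argue directly by exhibiting $\bar s$ with $p\in D_+(\bar s)$, while the paper argues by contradiction on the relevance of the contracted ideal.
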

\begin{proof}

Consider the following diagram of morphisms of graded algebras
\begin{align}
\xymatrix{  R(S_\bullet) \ar@{->>}[r]^{r_S} \ar@{^{(}->}[d]^{\iota} 
&  R(S_{|Y,\bullet}) \ar@{^{(}->}[d]^{\iota_Y} \\ 
R(T_\bullet) \ar@{->>}[r]^{r_T} &
R(T_{|Y,\bullet})
            }  
\end{align}
where the horizontal mappings are just the restriction of sections and the vertical maps are given by inclusion.
For a graded algebra $U=\bigoplus_{k\in\mathbb{N}} U_k$, define $U^{+}:=\bigoplus_{k>0} U_k$.
We want to show that the inclusion $\iota_Y$ defines a global morphism of corresponding projective spectra.
Therefore, we need to check if the preimage under $\iota_Y$ of each relevant homogeneous prime ideal $\p \subset R(T_{|Y,\bullet})^{+}$ is still relevant.
So suppose that the preimage is not relevant, i.e. $R(S_{|Y,\bullet})^{+}\subseteq \iota_Y^{-1}(\p)$. Then by definition of the restriction morphism, we get:
\begin{align}
R(S_\bullet)^{+}=r_S^{-1}(R(S_{|Y,\bullet})^{+})\subseteq r_S^{-1}(\iota_Y^{-1}(\p)).
\end{align}
This means that the ideal on the right hand side is not relevant. Due to the commutativity of the above diagram, the right hand side is equal to $\iota^{-1}(r_T^{-1}(\p))$. However, the ideal $r_T^{-1}(\p)$  is relevant since $r_T$ is surjective and therefore the ideal $ \iota^{-1}(r_T^{-1}(\p))$ is relevant as well since, by assumption, $\iota$ induces a global morphism of projective spectra. Hence, we get a contradiction, which shows the claim
\end{proof}
Let us now define what it means to contain an ample series.
\begin{defi}\label{defample}
Let $S_\bullet$ be a graded linear series on $X$ corresponding to $D$. We say that \emph{$S_\bullet$ contains the ample series $A=D-E$} if
\begin{enumerate}
\item $S_k\neq 0$ for $k\gg 0$ and
\item there is a decomposition of $\mathbb{Q}$-divisors $D=A+E$ where $A$ is ample and $E$ is effective such that
\begin{align}
H^0(X,\mathcal{O}(kA))\subseteq S_k \subseteq H^0(X,\mathcal{O}(kD)
\end{align}
for all $k$ divisible enough. Note that the inclusion of the outer groups is given by the multiplication of a defining section of $kE$.
\end{enumerate}
\end{defi}
As it was already pointed out in \cite{jow}, it is not difficult to show that a graded linear series containing an ample series is birational. This follows from the birationality of the ample series.

\begin{lem}\label{lembirrest}
Let $S_\bullet$ be a graded linear series corresponding to $D$ which contains the ample series $D-E$. Let $Y\subseteq X$ be a closed irreducible subvariety  such that $Y\not \subseteq \text{Supp}(E)$. Then the restricted linear series $S_{|Y,\bullet}$ contains an ample series corresponding to the decomposition $D_{|Y}=A_{|Y}+E_{|Y}$. 
\end{lem}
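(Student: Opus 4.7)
The plan is to verify the two conditions of Definition \ref{defample} for the restricted graded linear series $S_{|Y,\bullet}$ with respect to the decomposition $D_{|Y}=A_{|Y}+E_{|Y}$. Firstly, $A_{|Y}$ is ample on $Y$, since ampleness is preserved under restriction to a closed subvariety, and $E_{|Y}$ is effective because, by hypothesis, $Y\not\subseteq \mathrm{Supp}(E)$, so a defining section $s_E$ of $kE$ (for $k$ divisible enough to make $kE$ integral) does not vanish identically on $Y$, and its restriction $(s_E)_{|Y}$ is a defining section of $kE_{|Y}$.

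The heart of the argument is the inclusion
\begin{align}
H^0(Y,\struc_Y(kA_{|Y}))\subseteq S_{|Y,k}
\end{align}
realised by multiplication with $(s_E)_{|Y}^k$, for $k$ sufficiently divisible. My plan is to exploit the ampleness of $A$ via Serre vanishing: for $k$ large and divisible so that $kA$ is Cartier, we have $H^1(X,\mathcal{I}_Y\otimes \struc_X(kA))=0$, which yields surjectivity of the restriction map $H^0(X,\struc_X(kA))\twoheadrightarrow H^0(Y,\struc_Y(kA_{|Y}))$. Given $\sigma\in H^0(Y,\struc_Y(kA_{|Y}))$, I would lift it to some $\tilde{\sigma}\in H^0(X,\struc_X(kA))$; then, by the assumption that $S_\bullet$ contains the ample series $D-E$, the product $\tilde{\sigma}\cdot s_E^k$ lies in $S_k$. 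Restricting to $Y$ gives $(\tilde{\sigma}\cdot s_E^k)_{|Y}=\sigma\cdot (s_E)_{|Y}^k \in S_{|Y,k}$, which is precisely the desired image of $\sigma$ under the canonical embedding into $H^0(Y,\struc_Y(kD_{|Y}))$.

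For the non-triviality condition $S_{|Y,k}\neq 0$ for $k\gg 0$, I would note that for $k$ large enough, $\struc_X(kA)$ is globally generated, so one can pick a section of $kA$ not vanishing on $Y$; multiplying by $s_E^k$ yields an element of $S_k$ whose restriction to $Y$ is non-zero. The outer inclusion $S_{|Y,k}\subseteq H^0(Y,\struc_Y(kD_{|Y}))$ is automatic from the definition of the restricted linear series. The only point that could pose a mild difficulty is the surjectivity of the restriction map, but since $A$ is ample this is immediately handled by Serre vanishing, so no real obstacle is expected.
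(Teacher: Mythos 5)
Your proposal is correct and follows essentially the same route as the paper: restrict the decomposition $D=A+E$ to $Y$, note that $A_{|Y}$ stays ample and $E_{|Y}$ stays effective because $Y\not\subseteq\operatorname{Supp}(E)$, and use Serre vanishing to get surjectivity of $H^0(X,\struc_X(kA))\to H^0(Y,\struc_Y(kA_{|Y}))$ so that the inclusion $H^0(Y,\struc_Y(kA_{|Y}))\subseteq S_{|Y,k}$ follows by lifting and multiplying by a defining section of $kE$. You merely spell out the lifting step and the non-vanishing argument in more detail than the paper does.
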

\begin{proof}
The restriction of an ample divisor to a closed subvariety is ample. Since $Y\not\subset E$ we conclude that $E_{|Y}$ is effective.
Hence $D_{|Y}=A_{|Y}+E_{|Y}$ is a decomposition into ample and effective. Furthermore, the stable base locus of $S_\bullet$ is contained in $E$, by the assumption that $S_\bullet$ contains the ample series $D-E$.  Hence, there is a $k\gg 0$ such that $S_{|Y,k}\neq 0$.
For $k$ divisible enough, we conclude, by Serre vanishing, that $H^0(Y,\mathcal{O}_Y(kA))=H^0(X,\mathcal{O}_X(kA))_{|Y}$.
From this identity we deduce the desired inclusion 
\begin{align}
H^0(Y,\mathcal{O}_Y(kA))\subseteq S_{|Y,\bullet}\subseteq H^0(Y,\mathcal{O}_Y(kD)).
\end{align} 
\end{proof}

Since $\bsp(D)= \bigcap_{D=A+E} \text{Supp}(E)$, we recover the fact deduced in \cite{LM09}, that for a big divisor $D$ on $X$ and $Y\not\subseteq \bsp(D)$, the restricted linear series contains an ample series (satisfies condition (C)).

Now, we are able to prove our first slice formula for $t=0$ under the condition that $S_\bullet$ has full volume. It will follow as a corollary of the following.

\begin{thm} \label{thmsliceful}
Let $S_\bullet \subseteq T_\bullet$ be two finitely generated graded linear series such that $\vol(S_\bullet)=\vol(T_\bullet)>0$.
Suppose furthermore that $S_\bullet$ contains the ample series $D-E$.
 Then for all closed irreducible subvarieties $Y\not \subseteq \text{Supp}(E)$ we have
\begin{align}
\vol_{X|Y}(S_\bullet)=\vol_{X|Y}(T_\bullet).
\end{align}
\end{thm}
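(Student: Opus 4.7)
The plan is to apply the characterization of finitely generated graded linear series with equal volume (Theorem \ref{thmcharempty}) twice: first \emph{upstairs} on $X$ to translate the hypothesis $\vol(S_\bullet)=\vol(T_\bullet)$ into the statement that the induced map $\proj(T_\bullet)\to\proj(S_\bullet)$ is globally defined, and then \emph{downstairs} on $Y$ to extract the equality of restricted volumes from the analogous geometric condition on the restricted linear series.

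First I would observe that both $S_\bullet$ and $T_\bullet$ are birational. Since $S_\bullet$ contains the ample series $D-E$, the inclusions $H^0(X,\struc_X(kA))\subseteq S_k\subseteq T_k$ for $k$ divisible enough show that $T_\bullet$ also contains that ample series, so both are birational. Applying Theorem \ref{thmcharempty} in the direction (a)$\Rightarrow$(b) to the inclusion $S_\bullet\subseteq T_\bullet$ yields that $h_{S_\bullet}\colon X\dashrightarrow \proj(S_\bullet)$ is birational and that $\bs(S_\bullet)=\emptyset$ on $\proj(T_\bullet)$. The latter is equivalent to the statement that the natural inclusion $R(S_\bullet)\hookrightarrow R(T_\bullet)$ induces a globally defined morphism $\proj(T_\bullet)\to\proj(S_\bullet)$, since, as in the proof of Theorem \ref{thmcharempty}, the complement of the locus of definition is exactly the base scheme of $S_\bullet$ on $\proj(T_\bullet)$.

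Next I would restrict to $Y$. Because $Y\not\subseteq \supp(E)$, Lemma \ref{lembirrest} guarantees that both $S_{|Y,\bullet}$ and $T_{|Y,\bullet}$ contain the ample series $A_{|Y}-E_{|Y}$ on $Y$; in particular both are birational and of positive volume. The restriction homomorphisms of graded algebras $R(S_\bullet)\twoheadrightarrow R(S_{|Y,\bullet})$ and $R(T_\bullet)\twoheadrightarrow R(T_{|Y,\bullet})$ are surjective, so finite generation is inherited by the restricted rings. Now Lemma \ref{lemprojrest} applies directly to the inclusion $S_\bullet\subseteq T_\bullet$: the induced morphism $\proj(T_{|Y,\bullet})\to \proj(S_{|Y,\bullet})$ is globally defined, which translates back into $\bs(S_{|Y,\bullet})=\emptyset$ on $\proj(T_{|Y,\bullet})$.

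To conclude I would apply Theorem \ref{thmcharempty} once more, this time to the pair $S_{|Y,\bullet}\subseteq T_{|Y,\bullet}$ of finitely generated birational graded linear series on $Y$, in the direction (b)$\Rightarrow$(a). The birationality of $h_{S_{|Y,\bullet}}$ follows from the fact that $S_{|Y,\bullet}$ contains an ample series, and the base-locus condition was just established. This gives $\vol(S_{|Y,\bullet})=\vol(T_{|Y,\bullet})$, which is precisely $\vol_{X|Y}(S_\bullet)=\vol_{X|Y}(T_\bullet)$. The main obstacle I anticipate is the bookkeeping in the second step: one must be careful that the hypothesis of Lemma \ref{lemprojrest} (that the upstairs map be globally defined on all of $\proj(T_\bullet)$, not merely on a dense open subset of $X$) is truly guaranteed by Theorem \ref{thmcharempty}, and not by some weaker base-point-freeness statement on $X$ itself.
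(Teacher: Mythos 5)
Your proposal is correct and follows essentially the same route as the paper's proof: establish the globally defined morphism $\proj(T_\bullet)\to\proj(S_\bullet)$ from the volume hypothesis via Theorem \ref{thmcharempty}, descend to $Y$ via Lemma \ref{lemprojrest} and Lemma \ref{lembirrest}, and apply Theorem \ref{thmcharempty} again to the restricted pair. Your explicit checks (that $T_\bullet$ inherits the ample series, and that the restricted rings are finitely generated as quotients) are details the paper leaves implicit but are correct.
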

\begin{proof}
From the equality of volumes and the birationality of the maps $h_{S_\bullet}$ and $h_{T_\bullet}$ we can conclude, as in Theorem \ref{thmcharempty}, that the inclusion $R(S_\bullet)\subseteq R(T_\bullet)$ gives rise to a globally defined regular map:
\begin{align}
\proj(T_\bullet)\to \proj(S_\bullet).
\end{align}
Due to Lemma \ref{lemprojrest}, we arrive at the following commutative diagram:
\begin{align}
\xymatrix{ Y \ar@{-->}[rr]^{h_{T_{|Y,\bullet}}}\ar@{-->}[rrd]_{h_{S_{|Y,\bullet}}}& & \proj (T_{|Y,\bullet})\ar[d]^j \\
& & \proj (S_{|Y,\bullet}).}
\end{align}
By Lemma \ref{lembirrest}, the restricted series $S_{|Y,\bullet}$ and $T_{|Y,\bullet}$ contain an ample series. Hence, the maps $h_{T_{|Y,\bullet}}$ and $h_{S_{|Y,\bullet}}$ are both birational. Then we can conclude, as in Theorem \ref{thmcharempty}, that $\vol_{X|Y}(S_\bullet)=\vol_{X|Y}(T_\bullet)$.
\end{proof}
\begin{cor} \label{corslicefull}
Let $X$ be a normal projective variety.
Let $S_\bullet$ be a finitely generated graded linear series corresponding to a finitely generated divisor $D$ such that $\vol(S_\bullet)=\vol(D)$.
Suppose furthermore that $S_\bullet$ contains the ample series $D-E$.
 Then for all very admissible flags $Y_\bullet$ such that $Y_1$ does not contain the support of $E$ we have:
\begin{align}
\Delta_{Y_\bullet}(S_\bullet)_{\nu_1=0}=\Delta_{X|Y_1}(S_\bullet).
\end{align}
\end{cor}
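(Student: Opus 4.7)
The plan is to reduce the claim to the classical slice formula of \cite[Theorem 4.24]{LM09} for the complete linear series by using the equalities of volumes supplied by Theorem \ref{thmsliceful} and Lemma \ref{lemvolok}. Concretely, I will take $T_\bullet := R_{\bullet}(X,D)$ as an auxiliary ambient series and show that, both on $X$ and on the divisor $Y_1$, the Newton--Okounkov bodies of $S_\bullet$ and $T_\bullet$ coincide; the classical slice formula for the complete series will then transfer to $S_\bullet$.

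First I would check the setup. Since $D$ is finitely generated, $T_\bullet=R_\bullet(X,D)$ is finitely generated; the inclusion $S_\bullet\subseteq T_\bullet$ implies that $T_\bullet$ also contains the ample series $D-E$, and is therefore birational. By hypothesis $\vol(S_\bullet)=\vol(D)=\vol(T_\bullet)$, so Lemma \ref{lemvolok} yields $\Delta_{Y_\bullet}(S_\bullet)=\Delta_{Y_\bullet}(T_\bullet)$; in particular their $\nu_1=0$ slices agree.

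Next I would pass to the divisor $Y_1$. Applying Theorem \ref{thmsliceful} to the inclusion $S_\bullet\subseteq T_\bullet$, together with the hypothesis $Y_1\not\subseteq\operatorname{Supp}(E)$, produces
\begin{align}
\vol_{X|Y_1}(S_\bullet)=\vol_{X|Y_1}(T_\bullet).
\end{align}
By Lemma \ref{lembirrest} both restricted series $S_{|Y_1,\bullet}$ and $T_{|Y_1,\bullet}$ contain the ample series $A_{|Y_1}=D_{|Y_1}-E_{|Y_1}$ on $Y_1$, and hence have positive volume. A second application of Lemma \ref{lemvolok}, this time to the inclusion $S_{|Y_1,\bullet}\subseteq T_{|Y_1,\bullet}$, then gives
\begin{align}
\Delta_{X|Y_1}(S_\bullet)=\Delta_{X|Y_1}(T_\bullet).
\end{align}

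To finish, I would invoke the classical slice formula \cite[Theorem 4.24]{LM09} applied to the big divisor $D$ and the very admissible flag $Y_\bullet$. Its hypothesis $Y_1\not\subseteq\bsp(D)$ is satisfied because $\bsp(D)\subseteq\operatorname{Supp}(E)$ (as $D-E$ is ample) and $Y_1\not\subseteq\operatorname{Supp}(E)$ by assumption. Hence $\Delta_{Y_\bullet}(D)_{\nu_1=0}=\Delta_{X|Y_1}(D)$. Chaining this with the two equalities above delivers the desired identity. I do not foresee a serious obstacle: the proof is essentially a bookkeeping argument that verifies the hypotheses of Theorem \ref{thmsliceful}, Lemma \ref{lemvolok} (twice), and the LM09 slice formula; the only nontrivial point is the transition from equality of Newton--Okounkov bodies on $X$ to equality of the restricted bodies on $Y_1$, which is handled by Theorem \ref{thmsliceful} combined with Lemma \ref{lembirrest}.
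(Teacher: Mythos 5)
Your proposal is correct and follows essentially the same route as the paper: apply Theorem \ref{thmsliceful} with $T_\bullet=R_\bullet(X,D)$ to equate the restricted volumes on $Y_1$, upgrade to equality of the restricted Newton--Okounkov bodies, and conclude via the LM09 slice formula for $D$ together with $\Delta_{Y_\bullet}(S_\bullet)=\Delta_{Y_\bullet}(D)$. Your version is in fact slightly more careful than the paper's, since you explicitly verify the hypotheses of Lemma \ref{lemvolok} on $Y_1$ (nesting and positivity of the restricted volume via Lemma \ref{lembirrest}), steps the paper leaves implicit.
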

\begin{proof}
From the above theorem we conclude that $\vol_{X|Y_1}(D)=\vol_{X|Y_1}(S_\bullet)$, which implies an equality $\Delta_{X|Y_1}(D)=\Delta_{X|Y_1}(S_\bullet)$. We have $\bsp(D)\subseteq E$  and therefore $Y_1\not\subseteq \bsp(D)$. Hence, we can use the slice formula \cite[Theorem 4.2.4]{LM09}  to conclude
\begin{align}
\Delta_{Y_\bullet}(S_\bullet)_{\nu_1=0}=\Delta_{Y_\bullet}(D)_{\nu_1=0}=\Delta_{X|Y_1}(D)=\Delta_{X|Y_1}(S_\bullet).
\end{align}

\end{proof}
Now, we want to get rid of the assumption that $S_\bullet$ has  to be finitely generated with full volume. However, the price we pay for this reduction  is the additional constraint that the point $Y_d$ of the flag be not contained in the stable base locus $\bs(S_\bullet)$.
\begin{thm} \label{thmslice}
Let $X$ be a normal projective variety.
Let $S_\bullet$ be a graded linear series containing the ample series $D-F$. Let $Y_\bullet$ be a very admissible flag such that  $Y_1$  is not contained in the support of $F$ and $Y_d\not \in \bs(S_\bullet)$. Then we have
\begin{align}
\Delta_{Y_\bullet}(S_\bullet)_{\nu_1=0}=\Delta_{X|Y_1}(S_\bullet).
\end{align}
\end{thm}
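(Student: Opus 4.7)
The plan is to establish the inclusion $\Delta_{X|Y_1}(S_\bullet)\subseteq \Delta_{Y_\bullet}(S_\bullet)_{\nu_1=0}$ by matching valuation points, and then to obtain the opposite inclusion by Fujita approximation, reducing to the finitely generated, full-volume setting of Corollary~\ref{corslicefull}.

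For the first inclusion, every valuative point of $\Delta_{X|Y_1}(S_\bullet)$ has the form $\nu_{Y'_\bullet}(s|_{Y_1})/k$ for some $s\in S_k$ with $s|_{Y_1}\neq 0$; this is equivalent to $\nu_1(s)=0$. Under the identification $\{0\}\times\mathbb{R}^{d-1}\cong\mathbb{R}^{d-1}$ this valuative point becomes $(1/k)(0,\nu_2(s),\dots,\nu_d(s))$, which is a valuative point of $\Delta_{Y_\bullet}(S_\bullet)_{\nu_1=0}$. Taking closed convex hulls yields the inclusion.

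For the reverse inclusion I would apply Fujita approximation to $S_\bullet$: for every $\delta>0$, one constructs a birational modification $\pi\colon \tilde X\to X$, a decomposition $\pi^*D=A_\delta+E_\delta$ with $A_\delta$ ample and $E_\delta$ effective containing $\pi^*F$, and a finitely generated graded linear subseries $T^{(\delta)}_\bullet\subseteq \pi^*S_\bullet$ containing the ample series $\pi^*D-E_\delta$ and satisfying $\vol(T^{(\delta)}_\bullet)=\vol(A_\delta)>\vol(S_\bullet)-\delta$. Using the hypothesis $Y_d\not\in\bs(S_\bullet)$ one can further arrange $\pi$ to be an isomorphism over a neighborhood of $Y_d$, so that the pulled-back flag $\tilde Y_\bullet$ is very admissible, $\tilde Y_1\not\subseteq \text{Supp}(E_\delta)$, and $\tilde Y_d\not\in\bs(T^{(\delta)}_\bullet)$. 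Corollary~\ref{corslicefull} then applies to $T^{(\delta)}_\bullet$ on $\tilde X$ and produces the slice formula
\[
\Delta_{\tilde Y_\bullet}(T^{(\delta)}_\bullet)_{\nu_1=0}=\Delta_{\tilde X|\tilde Y_1}(T^{(\delta)}_\bullet).
\]

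Letting $\delta\to 0$, the left-hand sides embed into $\Delta_{Y_\bullet}(S_\bullet)_{\nu_1=0}$ via the identification induced by $\pi$, while the right-hand sides should exhaust $\Delta_{X|Y_1}(S_\bullet)$ in volume by a restricted Fujita approximation applied to $S_{|Y_1,\bullet}$, which itself contains an ample series by Lemma~\ref{lembirrest}. Combining this volume convergence with the already-established easy inclusion, and the fact that both sides are $(d-1)$-dimensional closed convex sets, forces the equality $\Delta_{Y_\bullet}(S_\bullet)_{\nu_1=0}=\Delta_{X|Y_1}(S_\bullet)$. The main obstacle is precisely the restricted Fujita approximation step: one must choose the approximating subseries so that both $\vol(T^{(\delta)}_\bullet)\to\vol(S_\bullet)$ and $\vol_{\tilde X|\tilde Y_1}(T^{(\delta)}_\bullet)\to\vol_{X|Y_1}(S_\bullet)$, with all flag and base-locus hypotheses preserved on the birational model; the assumption $Y_d\not\in\bs(S_\bullet)$ is exactly what allows the latter.
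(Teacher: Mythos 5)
Your overall strategy --- the easy inclusion via matching valuation points, followed by Fujita approximation to reduce to the finitely generated, full-volume case of Corollary \ref{corslicefull} --- is the same as the paper's, and your easy inclusion is correct. However, the implementation of the approximation step has two genuine gaps. First, Corollary \ref{corslicefull} cannot be applied to your $T^{(\delta)}_\bullet$ as you have set it up: as a graded linear series corresponding to $\pi^*D$ it satisfies $\vol(T^{(\delta)}_\bullet)=\vol(A_\delta)<\vol(\pi^*D)$, so the hypothesis $\vol(T^{(\delta)}_\bullet)=\vol(\text{divisor})$ fails (and $\pi^*D$ need not be a finitely generated divisor either). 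To salvage this you would have to regard $T^{(\delta)}_\bullet$ as a series for the divisor $A_\delta$; but then identifying its $\nu_1=0$ slice with a subset of $\Delta_{\tilde Y_\bullet}(\pi^*S_\bullet)_{\nu_1=0}$ requires the full valuation vector of the embedding section $s_{E_\delta}$ to vanish, i.e.\ conditions on the whole flag relative to $E_\delta$ that you have not arranged. Second, and more seriously, the ``restricted Fujita approximation'' that you yourself flag as the main obstacle is precisely the crux of the hard inclusion, and it is left unproved; without control of $\vol_{\tilde X|\tilde Y_1}(T^{(\delta)}_\bullet)$ you get no lower bound on the volume of the slice, hence no equality.

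The paper avoids both problems by a different choice of approximants, namely $V_{k,p}:=\operatorname{Im}(\operatorname{Sym}^k S_p\to S_{kp})$. These are finitely generated and generated in degree one (after reindexing), and --- crucially --- restriction to $Y_1$ commutes with this construction, so $(V_{\bullet,p})_{|Y_1}$ is the corresponding approximation of $S_{|Y_1,\bullet}$; since $S_{|Y_1,\bullet}$ contains an ample series by Lemma \ref{lembirrest}, \cite[Theorem 3.5]{LM09} applies on $Y_1$ as well and yields $\Delta_{X|Y_1}(S_\bullet)=\overline{\bigcup_p 1/p\cdot\Delta_{X|Y_1}(V_{\bullet,p})}$, which is exactly the restricted approximation you are missing. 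The full-volume hypothesis of Corollary \ref{corslicefull} is then achieved not through an ample decomposition but by first proving the theorem for finitely generated series generated in degree one: one blows up the base ideal $\bsi_{S_1}$, where $\pi^*S_\bullet-E$ becomes base point free and hence of full volume with respect to $\pi^*D-E$, and the hypothesis $Y_d\notin\bs(S_\bullet)$ is used to lift the flag through this blow-up and to compare the restricted bodies upstairs and downstairs. If you wish to keep your divisorial version of Fujita approximation, you must supply the convergence of restricted volumes yourself; the Veronese-image approximants are the standard device that gives it for free.
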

\begin{proof}
Let us first  treat the case where $S_\bullet$ is a finitely generated graded linear series generated by $S_1$.
Let $\pi \colon X^\prime \to X$ be the blow-up of the base ideal $\bsi_{S_1}$, $E$ the exceptional divisor and let $S^\prime_\bullet:= \pi^* S_\bullet -E$, as well as $D_E:= \pi^*D-E$.
Consider the decomposition $D=A+F$ into ample plus effective. There is a $k\gg 0$ such that  $\pi^*A-kE$ is very ample, we get an induced decomposition $\pi^*D=(\pi^*A-1/k\cdot E)+ (1/k\cdot E+\pi^*F)$.
Then it is easy to see that $S^\prime_\bullet$ contains the ample series $\pi^*A-1/k\cdot E=\pi^*D-(1/k\cdot E+\pi^*F)$.
Let $\tilde{Y_i}$ be the strict transform of $Y_i$ and $\tilde{Y}_{\bullet}$ be the corresponding flag on $\tilde{X}$.
Clearly, the strict transform $\tilde{Y_1}$ is not contained in the support of $ 1/k \cdot E+\pi^*F$ since $Y_1\not \subset\text{Supp}(F)$ and $\bs(S_\bullet)\subset \text{Supp}(F)$. 
Now we have 
\begin{align}
\Delta_{Y_\bullet}(S_\bullet)_{\nu_1=0}=\Delta_{\tilde{Y_\bullet}}(S^\prime_\bullet)_{\nu_1=0}=\Delta_{\tilde{X}|\tilde{Y_1}}(S^\prime_\bullet)
\end{align}
where the second  equality follows from Corollary \ref{corslicefull}.
To finish the first part of the proof we need to show  that $\Delta_{\tilde{X}|\tilde{Y_1}}(S^\prime_\bullet)=\Delta_{X|Y_1}(S_\bullet)$.
We have
\begin{align}
\Delta_{X|Y_1}(S_\bullet)\subseteq \Delta_{Y_\bullet}(S_\bullet)_{\nu_1=0}=\Delta_{\tilde{X}|\tilde{Y_1}}(S^\prime_\bullet).
\end{align}
The other inclusion follows from the fact that 
\begin{align}
\vol_{\tilde{X}|\tilde{Y_1}}(S^\prime_\bullet)= \vol_{\tilde{X}|\tilde{Y_1}}(\pi^*S_\bullet)=\vol_{X|Y_1}(S_\bullet)
\end{align}
where the first equality follows from the   bijection $S^\prime _{1|Y_1}\cong\pi^*S_1$ given by multiplication with the restriction of a defining section $s_E$ of $E$ to $Y^\prime_1$ and the last 
 equality follows from the property that $(\pi^*s)_{|\tilde{Y_1}}=(\pi_{|\tilde{Y_1}})^*(s_{|Y_1})$.
This proves the theorem for $S_\bullet$ being finitely generated.

Finally, we want to treat the case when $S_\bullet$ is not necessary finitely generated.
We will use Fujita approximation to reduce the statement to finitely generated graded linear series.
Define  the graded linear series $V_{\bullet,p}$ by \begin{align}
V_{k,p}:= \operatorname{Im} (\operatorname{Sym}^k(S_p)\to S_{kp}).
\end{align}
From \cite[Theorem 3.5]{LM09}, we deduce that for each $\varepsilon>0$, we can find $p_0$ such that for all $p\geq p_0$
\begin{align}
\vol (1/p\cdot\Delta_{Y_\bullet}(V_{\bullet,p}))\geq \vol (\Delta_{Y_\bullet}(S_\bullet))-\varepsilon.
\end{align}
Combining this with the easy fact that $1/p \cdot \Delta_{Y_\bullet}(V_{\bullet,p})\subseteq 1/p^\prime \cdot \Delta_{Y_\bullet}(V_{\bullet,p^\prime})$ for $p\leq p^\prime$ we have
\begin{align}
\Delta_{Y_\bullet}(S_\bullet)=\overline{\bigcup_{p\geq 0} 1/p\cdot \Delta_{Y_\bullet}(V_{\bullet,p})}. 
\end{align}
Analogously, we get 
\begin{align}
\Delta_{X|Y_1}(S_\bullet)=\overline{\bigcup_{p\geq 0} 1/p\cdot \Delta_{X|Y_1}(V_{\bullet,p})}. 
\end{align}

Combining these two properties leads to
\begin{align} 
\Delta_{Y_\bullet}(S_\bullet)_{\nu_1=0}&=
 \overline{\bigcup_{p\geq 0} ( 1/p\cdot \Delta_{Y_\bullet}(V_{\bullet,p})} \cap (\{0\}\times \mathbb{R}^{d-1}))=\\
&=\overline{ \bigcup_{p\geq 0} ( 1/p\cdot \Delta_{Y_\bullet}(V_{\bullet,p}) \cap  (\{0\}\times \mathbb{R}^{d-1})  })=\\
&=\overline{\bigcup_{p\geq 0} \left( \Delta_{X|Y_1}(V_{\bullet,p}) \right) }=\\ 
&= \Delta_{X|Y_1}(S_\bullet).  
\end{align}
Note that in the second equality we used the slice formula for finitely generated graded linear series.
This finishes the proof.
\end{proof}

We can apply the above theorem to the case of a restricted graded linear series. This enables us to get a generalization of   \cite[Theorem B]{jow}, which states that for a divisor $D$ and a curve $C$ which is constructed from intersecting $d-1$ very general very ample effective divisors $A_i$ on $X$. We have that the restricted volume of $\vol_{X|C}(D)$ is equal to the length of $\Delta_{Y_\bullet}(D)_{\nu_1=0,\dots, \nu_{d-1}=0}$ where $Y_i:=A_1\cap\dots \cap A_i$. 

\begin{cor}
Let $D$ be a divisor on $X$ and $Y_\bullet$ an admissible flag centered at $\{x\}\not \in \bs(D)$, such that $Y_i$ defines a Cartier divisor in $Y_{i+1}$.
Then for $Y_i\not\subseteq \bsp(D)$ we have
\begin{align}
\Delta_{X|Y_i}(D)=\Delta_{Y_\bullet}(D)_{\nu_1=0,\dots , \nu_{i}=0} .
\end{align}
\end{cor}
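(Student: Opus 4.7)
The plan is to proceed by induction on $i$, using Theorem \ref{thmslice} as the engine and transitivity of restriction to relate restricted series at consecutive levels of the flag. The base case $i=1$ is a direct application of Theorem \ref{thmslice} to the complete section algebra $S_\bullet = R_\bullet(X,D)$: since $D$ is big and $Y_1\not\subseteq \bsp(D)$, one can pick an ample-plus-effective decomposition $D = A+E$ with $Y_1\not\subseteq \text{Supp}(E)$, so that $R_\bullet(X,D)$ contains the ample series $D - E$; moreover $Y_d = \{x\}\not\in \bs(D) = \bs(R_\bullet(X,D))$ is one of the hypotheses of the corollary, and $Y_1$ is Cartier by assumption on the flag. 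Theorem \ref{thmslice} then yields $\Delta_{Y_\bullet}(D)_{\nu_1=0} = \Delta_{X|Y_1}(D)$.

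For the inductive step, suppose the formula holds up to $i-1$, i.e. $\Delta_{Y_\bullet}(D)_{\nu_1=0,\dots,\nu_{i-1}=0} = \Delta_{X|Y_{i-1}}(D)$. Under the natural identification of the coordinates $\nu_i,\dots,\nu_d$ of the original flag with the coordinates of the Newton-Okounkov body of $R_\bullet(X,D)_{|Y_{i-1}}$ with respect to the restricted flag $Y_{i-1}\supseteq Y_i\supseteq\dots\supseteq Y_d$ on $Y_{i-1}$, slicing both sides further by $\nu_i = 0$ gives
\begin{align}
\Delta_{Y_\bullet}(D)_{\nu_1=0,\dots,\nu_i=0} = \Delta_{X|Y_{i-1}}(D)_{\nu_1=0}.
\end{align}
Now apply Theorem \ref{thmslice} on $Y_{i-1}$ to the linear series $R_\bullet(X,D)_{|Y_{i-1}}$ with the restricted flag. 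By Lemma \ref{lembirrest}, since $Y_i\not\subseteq \bsp(D)$ we may choose the decomposition $D = A+E$ so that neither $Y_{i-1}$ nor $Y_i$ is contained in $\text{Supp}(E)$, and then $R_\bullet(X,D)_{|Y_{i-1}}$ contains the ample series $(D-E)|_{Y_{i-1}}$ with divisorial term $Y_i$ avoiding its effective part; the point $x$ lies outside the base locus of this restricted series because any section of $kD$ nonvanishing at $x$ restricts to a section of $R_\bullet(X,D)_{|Y_{i-1},k}$ nonvanishing at $x$; and $Y_i$ is Cartier in $Y_{i-1}$ by the hypothesis on the flag. Theorem \ref{thmslice} therefore gives
\begin{align}
\Delta_{X|Y_{i-1}}(D)_{\nu_1=0} = \Delta_{Y_{i-1}|Y_i}\bigl(R_\bullet(X,D)_{|Y_{i-1}}\bigr).
\end{align}

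To close the induction, one observes the transitivity of restriction of linear series: the double restriction $\bigl(R_\bullet(X,D)_{|Y_{i-1}}\bigr)_{|Y_i,\bullet}$ coincides with $R_\bullet(X,D)_{|Y_i,\bullet}$, since both are the image of $H^0(X,kD)$ in $H^0(Y_i,kD|_{Y_i})$ under the composition of the two restriction maps. Consequently $\Delta_{Y_{i-1}|Y_i}(R_\bullet(X,D)_{|Y_{i-1}}) = \Delta_{X|Y_i}(D)$, which combined with the previous two displays gives the desired equality at level $i$. The main obstacle is really just the bookkeeping of propagating the hypotheses of Theorem \ref{thmslice} along the flag; the crucial observation that makes this work is that the assumption $Y_i\not\subseteq \bsp(D)$ lets us choose, at each stage, an effective divisor in the ample decomposition of $D$ whose support avoids the divisorial component of the flag on $Y_{i-1}$.
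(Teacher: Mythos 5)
Your argument is correct and is essentially the paper's intended one: the corollary appears in the paper with no written proof, and the surrounding text makes clear it is meant to follow by iterating Theorem \ref{thmslice} on the restricted series $R_\bullet(X,D)_{|Y_{i-1}}$ exactly as you do, with Lemma \ref{lembirrest} supplying the ample subseries at each stage (your observation that $Y_i\not\subseteq \bsp(D)$ yields a single decomposition $D=A+E$ working for both $Y_{i-1}$ and $Y_i$ is the right bookkeeping) and transitivity of restriction closing the induction. The one hypothesis you do not verify is the normality of the ambient variety required in Theorem \ref{thmslice} when it is applied on $Y_{i-1}$; this gap is already present in the corollary as stated in the paper, which only assumes the flag is admissible, so it is not a defect introduced by your argument.
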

\qed

The last slice formula does not make any assumptions on the centered point $\{x\}$ of the flag, but  has more constraints on the divisorial component $Y_1$ of the chosen flag $Y_\bullet$.
\begin{thm}
Let $S_\bullet$ be a graded linear series that contains the ample series $A=D-E$. Let $Y_\bullet$ be a very admissible flag such that  $Y_1$ is not contained in the support of $E$ and is not fixed, i.e. there is  a natural number $k\in\mathbb{N}$ such that $h^0(X,\struc_{X}(k\cdot Y_1))> 1$. Then we have
\begin{align}
\Delta_{Y_\bullet}(S_\bullet)_{\nu_1=0}=\Delta_{X|Y_1}(S_\bullet).
\end{align}

\end{thm}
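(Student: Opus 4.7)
The easier inclusion $\Delta_{X|Y_1}(S_\bullet) \subseteq \Delta_{Y_\bullet}(S_\bullet)_{\nu_1=0}$ is immediate from the construction of the Newton-Okounkov body: every section $s \in S_k$ with $\operatorname{ord}_{Y_1}(s) = 0$ satisfies $\nu_{Y_\bullet}(s) = (0, \nu_{Y^\prime_\bullet}(s|_{Y_1}))$, so every valuation point of the restricted series $S_{|Y_1,\bullet}$ lifts automatically to a valuation point of the slice $\nu_1 = 0$.

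For the reverse inclusion I would first extract a moving section from the non-fixed hypothesis: given $t_0 \in H^0(X, \mathcal{O}_X(k Y_1))$ linearly independent from $s_{Y_1}^k$, its order $j := \operatorname{ord}_{Y_1}(t_0)$ must be strictly less than $k$, because otherwise $t_0/s_{Y_1}^k$ would extend to a global regular function on $X$ and hence to a constant. Setting $k^\prime := k-j$, the quotient $t := t_0 / s_{Y_1}^j$ is a genuine global section of $\mathcal{O}_X(k^\prime Y_1)$ with $\operatorname{ord}_{Y_1}(t) = 0$; in particular $t|_{Y_1}$ is a non-zero section of $\mathcal{O}_{Y_1}(k^\prime Y_1|_{Y_1})$, and this is what will allow us to reabsorb the $Y_1$-twist after restriction.

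The strategy is then to combine Theorem \ref{thmslicepositive} with a limit $\varepsilon \to 0^+$ through rational $\varepsilon$'s for which $\varepsilon k \in k^\prime \mathbb{Z}$. For each such $\varepsilon$ one has
\[
\Delta_{Y_\bullet}(S_\bullet)_{\nu_1=\varepsilon} = \Delta_{X|Y_1}(S_\bullet - \varepsilon Y_1),
\]
and by convexity and compactness of $\Delta_{Y_\bullet}(S_\bullet)$ the left-hand sides compute $\Delta_{Y_\bullet}(S_\bullet)_{\nu_1 = 0}$ in the limit (as the boundary face of the Newton-Okounkov body). On the right-hand side the section $t$ provides a twisting map: for $s \in S_k$ with $\operatorname{ord}_{Y_1}(s) = a k^\prime$, the product $(s/s_{Y_1}^{a k^\prime})|_{Y_1} \cdot (t|_{Y_1})^a$ lies in $H^0(Y_1, \mathcal{O}_{Y_1}(kD|_{Y_1}))$ and its $\nu_{Y^\prime_\bullet}$-valuation differs from the corresponding valuation point of $\Delta_{X|Y_1}(S_\bullet - \varepsilon Y_1)$ by $a \cdot \nu_{Y^\prime_\bullet}(t|_{Y_1})$, a shift which is $O(\varepsilon)$ after normalising by $k$ and hence vanishes in the limit.

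The principal technical obstacle will be verifying that these $t$-twisted sections are actually restrictions of sections of $S_\bullet$ itself, rather than of some strictly larger graded subseries of $H^0(X, \mathcal{O}_X(kD))$. This is where the ample-series hypothesis is essential: by Lemma \ref{lembirrest}, the condition $Y_1 \not\subseteq \operatorname{Supp}(E)$ forces the restricted series $S_{|Y_1,\bullet}$ to itself contain an ample series on $Y_1$, which gives enough room to absorb the $t$-twists inside $S_\bullet$. Passing to the limit $\varepsilon \to 0^+$ then identifies $\Delta_{X|Y_1}(S_\bullet - \varepsilon Y_1)$ with $\Delta_{X|Y_1}(S_\bullet)$, and combined with the easy inclusion this gives $\Delta_{Y_\bullet}(S_\bullet)_{\nu_1=0} = \Delta_{X|Y_1}(S_\bullet)$ as required.
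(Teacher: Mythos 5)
Your easy inclusion and your use of the ``not fixed'' hypothesis (extracting a section $t$ of $\mathcal{O}_X(k'Y_1)$ with $\operatorname{ord}_{Y_1}(t)=0$) are both correct and match the paper, as does the idea that Theorem \ref{thmslicepositive} is the engine of the proof. But the reverse inclusion as you have set it up has a genuine gap, and it sits exactly where you flag it. Via Theorem \ref{thmslicepositive}, the statement $\Delta_{X|Y_1}(S_\bullet-\varepsilon Y_1)\to\Delta_{X|Y_1}(S_\bullet)$ as $\varepsilon\to 0^+$ is \emph{equivalent} to the theorem itself (the left-hand sides are the slices $\Delta_{Y_\bullet}(S_\bullet)_{\nu_1=\varepsilon}$, which do converge to the slice at $0$ by convexity). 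So the entire content of the theorem is compressed into your claim that the $t$-twisted sections $(s/s_{Y_1}^{ak'})|_{Y_1}\cdot(t|_{Y_1})^a$ can be ``absorbed inside $S_\bullet$.'' Lemma \ref{lembirrest} does not deliver this: knowing that $S_{|Y_1,\bullet}$ contains an ample series on $Y_1$ gives no mechanism for multiplying a restricted section by a section of $\mathcal{O}_{Y_1}(ak'Y_1|_{Y_1})$ and landing back in $S_{|Y_1,\bullet}$ --- the product is merely a section of $\mathcal{O}_{Y_1}(kD|_{Y_1})$, with no reason to be a restriction of an element of $S_k$. This is precisely the obstruction that makes the $t=0$ case hard for non-complete series, and it is left unresolved.

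The paper's proof avoids the limit entirely by performing the twist \emph{upstairs on $X$ at the level of sheaves}: it replaces $S_\bullet$ by its sheafification $\tilde S_\bullet$ (harmless by Corollary \ref{sheafvol}) and forms the graded linear series $T_k:=H^0(X,\mathcal{S}_k\otimes\mathcal{O}_X(kY_1))$ attached to $D+Y_1$. Multiplication by $s_{Y_1}^{\otimes k}$ embeds $\tilde S_k$ into $T_k$ with $\nu_1$ shifted by $k$, so $(T_\bullet-Y_1)_k\cong\tilde S_k$, while your section $t$ (in the paper, the section $s$ with $\operatorname{ord}_{Y_1}(s)<k$) produces elements of $T_k$ with $\nu_1=0$, guaranteeing that the hyperplane $\{1\}\times\mathbb{R}^{d-1}$ meets the \emph{interior} of $\Delta_{Y_\bullet}(T_\bullet)$. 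One then applies Theorem \ref{thmslicepositive} and Corollary \ref{corslicepos} at $t=1$ --- where no limiting argument and no absorption of twists on $Y_1$ is needed --- and concludes with Theorem \ref{thmsliceful} to identify $\Delta_{X|Y_1}(\tilde S_\bullet)$ with $\Delta_{X|Y_1}(S_\bullet)$. If you want to salvage your approach, you would have to replace the twisting on $Y_1$ by this global construction; as written, the argument does not close.
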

\begin{proof}

Let $\mathcal{S}_\bullet$ be the sheafification of $S_\bullet$. Consider the graded linear series $T_\bullet$ corresponding to the divisor $D+Y_1$ defined by the sheaves $\mathcal{S}_k\otimes_{\struc_{X}} \struc_{X}(k\cdot Y_1)$, i.e. defined by $T_k:=H^0(X,\mathcal{S}_k\otimes_{\struc_{X}} \struc_{X}(k\cdot Y_1))$. We want to show first   that the restricted linear  series $T_{|Y_1,\bullet}$ is not equal to zero. This follows from the fact that there is a non zero section $s\in H^0(X,\struc_X(k\cdot Y_1))$ which does not vanish at $Y_1$. Indeed, such a section exists. Let $s$ be a section in $H^0(X,\struc_X(k\cdot Y_1))$ which is not equal to a power of $s_{Y_1}$ up to a constant.  Let $a$ be the order of vanishing of $s$ along $Y_1$. By definition of $s_{Y_1}$, we have $a<k$ and $s/s_{Y_1}^{\otimes a} \in H^0(X,\struc_{X}((k-a)\cdot Y_1))$ does not vanish at $Y_1$.

Since $Y_1$ is not contained in the support of $E$, it is in particular not contained in the stable base locus of $S_\bullet$. Thus, we can pick a non-zero section $s^\prime \in S_k$ which does not vanish at $Y_1$.  Hence, the section $s^{\prime\prime}:=s^{\otimes k} \otimes s^\prime \in T_k$, does not vanish at $Y_1$.
This implies that $\nu_1(s^{\prime \prime})=0$.
Moreover, we can choose a section $s\in S_k$ such that $\nu_1(s)>0$. Then for the section $\tilde{s}:=s_E^{\otimes k}\otimes s\in T_k$ we have $\nu_1(\tilde{s})>1$. 
\begin{figure}[h] 
\centering
\begin{tikzpicture}
\draw[->] (-.5,0)--(5,0)node[below]{$e_1$};
\draw[->] (0,-.5)--(0,4) node[left]{$e_2$};

\draw[color=red] (1,2)-- (2,2.5);
\draw[color=red]  (2,2.5)-- (3,2.7);
\draw [color=red] (3,2.7)--(4,2.3);
\draw[style=dashed, color=red] (1,0.5)--(1,2);
\draw[color=red] (1,0.5)-- (2,0.7);
\draw[color=red] (2,0.7)--(3,1.2) ;
\draw[color=red] (3,1.2)-- (4,1.7);

\draw[color=red] (4,2.3)--(4,1.7) node[color=red,left=1.2cm] {$\Delta_{Y_\bullet}(S_\bullet)$};
\draw (0,0.4)--(1,0.5);
\draw(0,1.4)--(1,2)node[above=0.5cm]{$\Delta_{Y_\bullet}(T_\bullet)$}; 
\draw (1,0.2)--(1,-0.2) node[below] {$1$};
\end{tikzpicture}
\caption{Newton-Okounkov body $\Delta_{Y_\bullet}(T_\bullet)$}
\label{slicefig}
\end{figure}
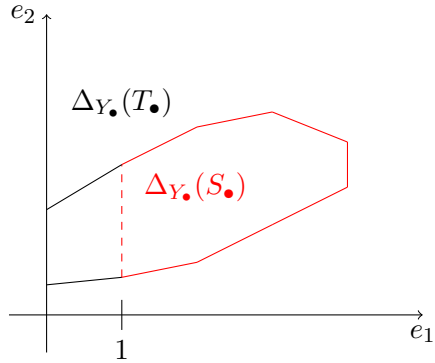
It follows from the above results on valuation vectors that the slice $\{1\}\times \mathbb{R}^{d-1}$ meets the interior of the Newton-Okounkov body $\Delta_{Y_\bullet}(T_\bullet)$.
By construction of $T_\bullet$, we have an isomorphism of sections $(T_\bullet-Y_1)_k\cong \tilde{S}_k$ where $\tilde{S}_\bullet$ is the sheafified graded linear series of $S_\bullet$.
With the help of Theorem \ref{thmslicepositive}, Corollary \ref{corslicepos} and Corollary \ref{sheafvol} we deduce:
\begin{align}
\Delta_{Y_\bullet}(S_\bullet)_{\nu_1=0}=& \Delta_{Y_\bullet}(\tilde{S}_\bullet)_{\nu_1=0}=\Delta_{Y_\bullet}(T_\bullet)_{\nu_1=1} \\
=&\Delta_{X|Y_1}(\tilde{S}_\bullet)=\Delta_{X|Y_1}(S_\bullet) .
\end{align}
Note that the last equality is due to Theorem \ref{thmsliceful}.
\end{proof}

\section{Generic Newton-Okounkov bodies } \label{secgen}
In this section we want to generalize the discussion in Chapter 5 of \cite{LM09}  to the case of birational  graded linear series $S_\bullet$. 
In order to define  Newton-Okounkov bodies, we have to fix  the variety $X$, the flag $Y_\bullet$ and a graded linear series $S_\bullet$, respectively a big divisor $D$. It was established in \cite[Theorem 5.1]{LM09}  that if we vary all the  different data $X,Y_\bullet$ and $D$ in a flat family, the resulting bodies all coincide  for a very general choice of these parameters. This allows to define  generic Newton-Okounkov bodies, called the infinitesimal Newton-Okounkov bodies, which do no longer depend on the choice of  a flag $Y_\bullet$.
The proof, which is presented in \cite{LM09}, relies heavily on the fact that $D$ induces a locally free sheaf $\mathcal{O}_X(D)$.
Hence, in order to generalize their results, we need to make use of the sheafification process considered in Section \ref{sectionsheaf}.  However, the resulting coherent sheaves $\mathcal{S}_k$ are not locally free, which also leads to technical difficulties to take into account. Finally, we will also get rid of the flatness hypothesis by using the theorem of generic flatness \cite[Corollary 10.84]{GW}.

\subsection{Family of Newton-Okounkov bodies} \label{famok}
Let us start by fixing the notation. Let $T$ be a (not necessarily projective) irreducible variety. This will be our parameter space. 
Let 
\begin{align}
\pi_T\colon X_T\to T
\end{align}
 be a family, such that for all $t\in T$ the fibers 
\begin{align} 
 X_t:= X_T\times_T k(t)
\end{align} 
  are projective varieties of dimension $d$. 
Let $S_{T,\bullet}$ be a graded linear series corresponding to a divisor $D_T$ on $X_T$ which is induced by a graded series of coherent sheaves $\mathcal{S}_{T,k}\subseteq \struc_{X_T}(k\cdot D_T)$. Furthermore, denote by $S_{t,\bullet}$  the graded linear series which is defined by taking the global sections of the pulled back sheaves $\mathcal{S}_{T,k|X_t}$. Additionally, we want to assume that $S_{t,\bullet}$ is a graded linear series corresponding to the divisor $D_t:=D_{T|X_t}$ as well as $\mathcal{S}_{t,k}$ are subsheaves of $\struc_{X_t}(k\cdot D_t)$.

 Let $\mathcal{Y}_\bullet$ be a partial flag of subvarieties
 \begin{align}
 X_T=\mathcal{Y}_0\supseteq \mathcal{Y}_1 \supseteq \cdots \supseteq \mathcal{Y}_d 
 \end{align}
 with the following additional properties. Denote the fibers of the flag 
$\mathcal{Y}_\bullet$ over $t\in T$  by 
 \begin{align}
 Y_{i,t}:= \mathcal{Y}_i\cap X_t.
 \end{align}
 The additional properties are:
\begin{enumerate}
\item Each $Y_{\bullet,t}$ is an admissible flag on $X_t$.
\item  The variety $\mathcal{Y}_{i+1}$ is a Cartier divisor in $\mathcal{Y}_i$.
\end{enumerate}

To summarize the above discussion, we give the following definition.
\begin{defi}\label{deffam}
Let $\pi_T\colon X_T\to T$, $S_{T,\bullet}$ and $\mathcal{Y}_{\bullet}$ be given such that all the above prescribed assumptions are fulfilled. Then we call $(X_T,S_{T,\bullet},\mathcal{Y}_\bullet)$ an \emph{admissible family of Newton-Okounkov bodies over  $T$}.
\end{defi}
Suppose we are given an admissible family of Newton-Okounkov bodies over $T$. Then for each $t\in T$, the following Newton-Okounkov body on $X_t$ is well defined:
\begin{align}
\Delta_{Y_{t,\bullet}}( S_{t,\bullet}).
\end{align}
The next lemma will be of help in the last section where we want to construct examples of admissible families of Newton-Okounkov bodies.
\begin{lem}\label{lempullgrad}
Let $p\colon X\to Y$ be a morphism of varieties. Let $S_\bullet$ be a graded linear series  corresponding to a divisor $D$ on $Y$ such that $S_1\neq \{0\}$. Suppose that $S_\bullet$ is given by taking global sections of a graded series of coherent sheaves $\mathcal{S}_k\subseteq \struc_Y(k\cdot D)$. Then the  pullback of the graded series of sheaves $p^*\mathcal{S}_k$ are coherent subsheaves of $\struc_X(k\cdot p^*D)$. Furthermore, by taking its global sections, it defines a graded linear series on $X$ (which we also denote by $p^*\mathcal{S}_\bullet$) if one of the following conditions are fulfilled:
\begin{itemize}
\item $p\colon X\to Y$ is flat, or 
\item $p\colon X\to Y$ is birational with $Y$ normal such that the image of the  exceptional locus is away from $\operatorname{Bs}(S_1)$.
\end{itemize}
Moreover, if $p$ is a morphism of projective varieties and satisfies the second condition, then $\operatorname{vol}(p^*\mathcal{S}_\bullet)=\operatorname{vol}(S_\bullet)$
\end{lem}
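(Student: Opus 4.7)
The plan is to handle the three assertions in sequence. \textbf{First,} for the inclusion $p^*\mathcal{S}_k \hookrightarrow \mathcal{O}_X(k\cdot p^*D) \cong p^*\mathcal{O}_Y(kD)$: in the flat case this is immediate, because pullback along a flat morphism is exact and hence preserves the injection $\mathcal{S}_k \hookrightarrow \mathcal{O}_Y(kD)$. In the birational case, the target $p^*\mathcal{O}_Y(kD)$ is invertible and hence torsion-free, and the natural morphism $p^*\mathcal{S}_k \to \mathcal{O}_X(kp^*D)$ is an isomorphism over the open set $X \setminus E$ where $p$ is an isomorphism (with $E$ the exceptional locus); any kernel must then be torsion supported on $E$, and passing to the image yields a coherent subsheaf of $\mathcal{O}_X(kp^*D)$.

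\textbf{Second,} for the graded multiplicative structure, I would pull back the multiplication maps $\mathcal{S}_k \otimes \mathcal{S}_l \to \mathcal{S}_{k+l}$, which factor the canonical multiplication $\mathcal{O}_Y(kD)\otimes \mathcal{O}_Y(lD) \to \mathcal{O}_Y((k+l)D)$. Functoriality of $p^*$ produces compatible maps $p^*\mathcal{S}_k \otimes p^*\mathcal{S}_l \to p^*\mathcal{S}_{k+l}$ respecting the embeddings into $\mathcal{O}_X(\,\cdot\, p^*D)$; taking global sections endows $(p^*\mathcal{S})_\bullet := \{H^0(X, p^*\mathcal{S}_k)\}_k$ with the structure of a graded linear series on $X$ corresponding to $p^*D$.

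\textbf{Third,} for the volume equality in the birational case, I would prove the stronger fact that under the natural identification $H^0(X, \mathcal{O}_X(kp^*D)) = H^0(Y, \mathcal{O}_Y(kD))$, one has $(p^*\mathcal{S})_k = S_k$ for every $k$. The identification itself comes from $p_*\mathcal{O}_X = \mathcal{O}_Y$, valid since $Y$ is normal and $p$ is birational, together with the projection formula applied to the invertible sheaf $\mathcal{O}_Y(kD)$. Given $\sigma \in H^0(X, p^*\mathcal{S}_k)$ corresponding to $s \in H^0(Y, \mathcal{O}_Y(kD))$, the restriction $s|_V$ to $V := Y \setminus p(E)$ lies in $\mathcal{S}_k|_V$ because $p$ restricts to an isomorphism over $V$. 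On the other hand, since $\operatorname{Bs}(S_k) \subseteq \operatorname{Bs}(S_1)$ is disjoint from $p(E)$, the sheaf $\mathcal{S}_k$ coincides with $\mathcal{O}_Y(kD)$ on an open neighborhood $W$ of $p(E)$: any section in $S_k$ nonvanishing at a given point generates the full stalk of the line bundle there, so outside $\operatorname{Bs}(S_k)$ the inclusion $\mathcal{S}_k \subseteq \mathcal{O}_Y(kD)$ is an equality. Since $V \cup W = Y$, the section $s$ lies in $\mathcal{S}_k$ globally, giving $s \in S_k$ and hence $(p^*\mathcal{S})_k = S_k$.

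\textbf{The main technical obstacle} I anticipate is the third step. A priori, pullback can create extra sections supported over the exceptional locus; it is precisely the hypothesis $p(E) \cap \operatorname{Bs}(S_1) = \emptyset$ that forces $\mathcal{S}_k$ to equal $\mathcal{O}_Y(kD)$ near $p(E)$ and rules out such extraneous sections. Without this hypothesis one generally expects $(p^*\mathcal{S})_k \supsetneq S_k$ with strictly larger volume, so the assumption on the exceptional locus is indispensable for the volume equality.
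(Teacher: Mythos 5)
Your proposal is correct and follows essentially the same route as the paper: the flat case via exactness of flat pullback, and the birational case by covering $Y$ with the locus where $p$ is an isomorphism together with a neighborhood of $p(E)$ on which $\mathcal{S}_k=\struc_Y(kD)$ (using $\operatorname{Bs}(S_k)\subseteq\operatorname{Bs}(S_1)$), combined with $p_*\struc_X=\struc_Y$ and the projection formula; the paper merely packages this as the statement that the canonical map $p_*p^*\mathcal{S}_k\to\mathcal{S}_k$ is an isomorphism. One small point: the hedge in your first step about ``passing to the image'' is unnecessary, since the same two-open-set covering you deploy in step three already shows that $p^*\mathcal{S}_k\to\struc_X(k\cdot p^*D)$ has zero kernel, which is what the statement literally requires.
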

\begin{proof}
Let us first suppose that $p\colon X\to Y$ is flat. By the flatness of $p$, the sheaf $p^*\mathcal{S}_k$ is a coherent $\struc_X$-module which is contained in $p^*\struc_X(kD)$. 
Furthermore, for each non-negative  pair of integers $k,l$ the injection $\mathcal{S}_k\otimes \mathcal{S}_l\to \mathcal{S}_{k+l}$ pulls back to an injection
\begin{align}
p^*\mathcal{S}_k\otimes p^*\mathcal{S}_l\to p^*\mathcal{S}_{k+l}.
\end{align}
Therefore it is easy to see that $p^*\mathcal{S}_\bullet$ defines a graded linear series.

Now, let $p\colon X\to Y$ be birational, $Y$ normal and suppose there is an open subset $V\subseteq Y$ such that $\operatorname{Bs}(S_k)\subseteq \operatorname{Bs}(S_1)\subseteq V$ which induces an isomorphism $p_{|p^{-1}(V)}\colon p^{-1}(V)\to V$.
Now we claim that the induced canonical morphism 
\begin{align}
\kappa \colon p_*p^*\mathcal{S}_k\to\mathcal{S}_k
\end{align}
 is an isomorphism. We will prove this by showing that for each $y\in Y$ we find an open subset $U_y$ such that the induced morphism of sections $\kappa(U_y)$ is an isomorphism. For $y\in V$ choose  $U_y\subseteq V$. Then the induced morphism of sections is an isomorphism since $p_{|p^{-1}(U_y)}$ is an isomorphism. 
If $y\not \in V$ we can find an open neighborhood $U_y$ such that $U_y\subseteq Y\setminus \operatorname{Bs}(S_k)$.
But on $U_y$ the coherent sheaf $\mathcal{S}_{k|U_y}$ is invertible. Hence, we have an isomorphism 
\begin{align}
\mathcal{S}_{k|U_y}\cong \struc_Y(kD)_{|U_y}.
\end{align} 
However, for the locally free sheaf $\struc_Y(kD)$ the canonical morphism $p_*p^*\struc_X(kD)\cong \struc_X(kD)$ is an isomorphism.
This follows by using Zariski's Main theorem and the projection formula.
Hence, the canonical morphism $\kappa(U_y)$ is an isomorphism which completes the proof of the fact that $\kappa$ is an isomorphism.
We have the following commutative diagram of coherent sheaves on $Y$
\begin{align}
\xymatrix{p_*p^*\mathcal{S}_k\ar[r]^\cong \ar@{^{(}->}[d]& \mathcal{S}_k\ar@{^{(}->}[d]\\
			p_*p^*\struc_X(kD) \ar[r]^\cong & \struc_X(kD).} 
\end{align}
Taking global sections of the left vertical map induces an injection
\begin{align}
H^0(X,p^*\mathcal{S}_k)\to H^0(X,p^*\struc_X(kD)).
\end{align}
It remains to prove that the global sections define a graded algebra. We can proof exactly as before that we have an isomorphism of $\struc_Y$-modules $p_*p^*(\mathcal{S}_k\otimes \mathcal{S}_l)\cong \mathcal{S}_k\otimes \mathcal{S}_l$.
Again, we have a commutative diagram of coherent sheaves on $Y$ given by
\begin{align}
\xymatrix{p_*p^*(\mathcal{S}_l\otimes \mathcal{S}_k)\ar[r]^\cong \ar@{^{(}->}[d]& \mathcal{S}_l\otimes\mathcal{S}_k\ar@{^{(}->}[d]\\
p_*p^*\mathcal{S}_{l+k} \ar[r]^{\cong}& \mathcal{S}_{l+k}.}
\end{align}
Taking global sections of this diagram gives us an injection 
\begin{align}
H^0(X,p^*\mathcal{S}_l\otimes p^*\mathcal{S}_k)\to H^0(X,p^*\mathcal{S}_{l+k}).
\end{align}

 This implies that $p^*\mathcal{S}_\bullet$ defines a graded linear series.
Now we want to prove that $p^*\mathcal{S}_k\subseteq \struc_X(p^*D)$. This can again be checked by case distinction of open sets. Let $U$ be open such that $p_{|U}$ induces an isomorphism, then clearly 
\begin{align}
H^0(U,p^*\mathcal{S}_k)\cong H^0(p(U),\mathcal{S}_k)\subseteq H^0(p(U),\struc_{X}(k\cdot D))\cong H^0(U,\struc_{Y}(k\cdot p^*D)).
\end{align}
If $U\subseteq X\setminus p^{-1}(\operatorname{Bs}(S_k))$, consider the induced morphism $p\colon U \to Y\setminus \operatorname{Bs}(S_k):=W$.
Let $\bsi_{S_k}$ be the ideal sheaf of $S_k$, then $\mathcal{S}_{k|W}=(\struc_{X}(kD)\otimes \bsi_{S_k})_{|W}=\struc_{X}(kD)_{|W}$.
But this shows that $H^0(U,p^*\mathcal{S}_k)=H^0(U,p^*\struc_{Y}(kD))$.

The equality of volumes follows by  taking global sections of the canonical isomorphism $\kappa$ which gives an isomorphism
\begin{align}
H^0(X,p^*\mathcal{S}_k)\cong H^0(Y,\mathcal{S}_k).
\end{align}

\end{proof}
\begin{cor}\label{corsheafok}
Let $p\colon X\to Y$ be a morphism of projective varieties satisfying the properties of the second statement in the above lemma.
Let $Y_\bullet$ be an admissible flag on $X$ and $S_\bullet$ be birational graded linear series on $Y$ which is induced by the graded linear series of sheaves $\mathcal{S}_\bullet$. Then
\begin{align}
\Delta_{Y_\bullet}(p^*\mathcal{S}_\bullet)=\Delta_{Y_\bullet}(p^*S_\bullet).
\end{align}
\end{cor}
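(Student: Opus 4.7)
The plan is to reduce the corollary to a direct application of Lemma \ref{lemvolok}, using the volume equality from Lemma \ref{lempullgrad}. The two graded linear series in question, $p^*S_\bullet$ and $p^*\mathcal{S}_\bullet$, both live inside $H^0(X,\struc_X(k\cdot p^*D))$, so it makes sense to compare them, and the natural approach is to show one sits inside the other and that their volumes coincide.

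First I would establish the inclusion $p^*S_\bullet\subseteq p^*\mathcal{S}_\bullet$ as graded linear series on $X$. For any section $s\in S_k\subseteq H^0(Y,\mathcal{S}_k)$, the pullback $p^*s$ is the image of $s$ under the adjunction map $H^0(Y,\mathcal{S}_k)\to H^0(X,p^*\mathcal{S}_k)$, so $p^*s\in(p^*\mathcal{S}_\bullet)_k$. Under the natural inclusions into $H^0(X,\struc_X(k\cdot p^*D))$ (provided by the inclusions $\mathcal{S}_k\hookrightarrow\struc_Y(kD)$ and the corresponding pulled-back one established in Lemma \ref{lempullgrad}) this exhibits $p^*S_k$ as a subspace of $(p^*\mathcal{S}_\bullet)_k$.

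Next I would compare volumes. Since $p$ is dominant, the pullback map $p^*\colon S_k\to p^*S_k$ is an isomorphism of vector spaces, whence $\vol(p^*S_\bullet)=\vol(S_\bullet)$. On the other hand, Lemma \ref{lempullgrad} gives precisely $\vol(p^*\mathcal{S}_\bullet)=\vol(S_\bullet)$, via the canonical isomorphism $H^0(X,p^*\mathcal{S}_k)\cong H^0(Y,\mathcal{S}_k)$ induced by the isomorphism $p_*p^*\mathcal{S}_k\cong\mathcal{S}_k$ proved there. Hence $\vol(p^*S_\bullet)=\vol(p^*\mathcal{S}_\bullet)$, and this common value is nonzero since $S_\bullet$ is birational and therefore has positive volume.

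Finally I would invoke Lemma \ref{lemvolok}: for two graded linear series with an inclusion and equal nonzero volumes, the Newton--Okounkov bodies agree for every admissible flag. Applied to $p^*S_\bullet\subseteq p^*\mathcal{S}_\bullet$ and the given flag $Y_\bullet$ on $X$, this yields
\begin{align}
\Delta_{Y_\bullet}(p^*S_\bullet)=\Delta_{Y_\bullet}(p^*\mathcal{S}_\bullet),
\end{align}
which is exactly the claimed equality. I expect no genuine obstacle here; the only subtlety is making sure the identifications of $p^*S_\bullet$ and $p^*\mathcal{S}_\bullet$ as subspaces of the same ambient $H^0(X,\struc_X(k\cdot p^*D))$ are the ones provided (respectively) by naive pullback of sections and by Lemma \ref{lempullgrad}, so that the inclusion statement has the intended meaning.
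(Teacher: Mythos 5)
Your proposal is correct and follows essentially the same route as the paper: the inclusion $p^*S_k\subseteq (p^*\mathcal{S}_\bullet)_k$ together with the chain of volume equalities $\vol(p^*S_\bullet)=\vol(S_\bullet)=\vol(p^*\mathcal{S}_\bullet)$ (the latter from Lemma \ref{lempullgrad}), concluding via the principle that an inclusion of graded linear series with equal nonzero volume forces equality of Newton--Okounkov bodies. Your version is in fact slightly more careful than the paper's, since you explicitly cite Lemma \ref{lemvolok} and note why the common volume is nonzero.
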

\begin{proof}
By Theorem \ref{thmsheafvol}, we can without loss of generality assume that $S_\bullet$ is induced by the graded series of sheaves $\mathcal{S}_\bullet$.
Since $\operatorname{vol}(p^*S_\bullet)=\operatorname{vol}(S_\bullet)=\operatorname{vol}(p^*\mathcal{S}_\bullet)$ it is enough to show one inclusion. But by construction $p^*\mathcal{S}_k$ contains all the elements $p^*s$ for $s\in S_k$. This shows that $\Delta_{Y_\bullet}(p^*S_\bullet)\subseteq \Delta_{Y_\bullet}(p^*\mathcal{S}_\bullet)$ and proves the claim.
\end{proof}
\begin{rem}
Note that by $p^*S_\bullet$ we denote the graded linear series, defined by considering the pullback of sections of $S_k$. However, by $p^*\mathcal{S_\bullet}$ we denote the graded linear series, given by taking the global sections of the pullback of coherent sheaves of $p^*\mathcal{S}_k$.
\end{rem}

\subsection{Partial sheafification of a graded linear series}
Let $X$ be a (not necessarily projective) variety, $D$ a  divisor and $\mathcal{S}$ a coherent subsheaf of $\struc_X(D)$. 
In this paragraph we want to generalize the discussion in \cite[Rem 1.4/1.5]{LM09}  to the sheaf $\mathcal{S}$.
Let $Y_\bullet$ be a partial flag of $X$ of length $r$ such that $Y_{i+1}$ is a Cartier divisor in $Y_{i}$.
Analogously as in the definition of Newton-Okounkov bodies, this partial flag defines  a valuation map
\begin{align}
\nu_{Y_\bullet}\colon H^0(X,\mathcal{S})\setminus \{0\} \to \mathbb{Z}^r.
\end{align}
If we fix a tuple $\sigma=(\sigma_1,\dots,\sigma_r)\in \mathbb{Z}^r$, we can define a subsheaf of $\mathcal{S}$ by setting for each open $U\subset X$ such that the induced flag $\mathcal{Y}_\bullet|U$ is of length $r^\prime\leq r$
\begin{align} \label{partialsheaf}
H^0(U,\mathcal{S}^{\geq (\sigma)}):=\{ s\in H^0(U,\mathcal{S}) \ | \ \nu_{Y_\bullet | U} (s) \geq (\sigma_1,\dots,\sigma_{r^\prime}) \}.
\end{align}
Here, the map $\nu_{Y_\bullet|U}$ is the valuation map corresponding to the restricted flag given by $Y_{i|U}:=Y_i \cap U$ on $U$. 

For the next theorem it is practical to make the following two abbreviations:
\begin{align}
\mathcal{S}(\sigma_1,\dots,\sigma_r):=&\mathcal{S}_{|Y_r}\otimes_{{Y_r}} \struc_X(-\sigma_1 Y_1)_{|Y_r}\otimes_{{Y_r}} \dots \otimes_{{Y_r}} \struc_{Y_{r-1}} (-\sigma_r Y_r)_{|Y_r} \\
\mathcal{S}(\sigma_1,\dots,\sigma_{r+1})_{|Y_r}:=&\mathcal{S}(\sigma_1,\dots,\sigma_r)\otimes_{Y_r}\struc_{Y_r}(-\sigma_{r+1} Y_{r+1}).
\end{align}
Note that these sheaves are both defined over $Y_r$. However, by a slight abuse of notation we will also consider them as sheaves over $X$ without writing them as pushforwards of the inclusion map $Y_r\hookrightarrow X$.

\begin{thm}
Let $\mathcal{S}$ be a coherent subsheaf of $\struc_X(D)$. Then for each partial flag $Y_\bullet$ of $X$ of length $r$ and $\sigma\in \mathbb{Z}^r$ there exists a coherent sheaf $\mathcal{S}^{\geq (\sigma)}$ such that \eqref{partialsheaf} holds and it induces a surjective morphism
\begin{align}
q_r\colon \mathcal{S}^{\geq (\sigma_1,\dots, \sigma_r)}\to \mathcal{S}(\sigma_1\dots, \sigma_r). 
\end{align}
\end{thm}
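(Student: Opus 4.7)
The plan is to proceed by induction on the length $r$ of the partial flag, mirroring the iterative nature of $\nu_{Y_\bullet}$. Recall that $\nu_{Y_\bullet}(s)\geq(\sigma_1,\dots,\sigma_r)$ precisely when $\nu_1(s)\geq\sigma_1$ and, after dividing by a local equation $s_{Y_1}^{\sigma_1}$ and restricting to $Y_1$, the resulting section has valuation $\geq(\sigma_2,\dots,\sigma_r)$ relative to the truncated flag $Y_1\supseteq Y_2\supseteq\dots\supseteq Y_r$. Thus a length-$r$ construction will arise from a base case of length one together with a mechanism for passing from length $r-1$ to $r$ via the surjection $q_{r-1}$.

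For the base case $r=1$, since $Y_1$ is Cartier, multiplication by the canonical section $s_{Y_1}^{\sigma_1}$ realizes $\struc_X(D-\sigma_1 Y_1)$ as the subsheaf of $\struc_X(D)$ consisting of local sections of order $\geq\sigma_1$ along $Y_1$. Setting
\begin{align*}
\mathcal{S}^{\geq\sigma_1}:=\mathcal{S}\cap \struc_X(D-\sigma_1 Y_1) \subseteq \struc_X(D)
\end{align*}
yields a coherent subsheaf of $\mathcal{S}$ whose sections satisfy \eqref{partialsheaf} directly. The morphism $q_1$ is defined locally by $s\mapsto (s/s_{Y_1}^{\sigma_1})_{|Y_1}$, which lands in $\mathcal{S}(\sigma_1)=(\mathcal{S}\otimes\struc_X(-\sigma_1 Y_1))_{|Y_1}$ and is surjective because any local section of $\mathcal{S}(\sigma_1)$ near $y\in Y_1$ lifts to $\mathcal{S}\otimes\struc_X(-\sigma_1 Y_1)$ on a neighborhood and, after multiplication by $s_{Y_1}^{\sigma_1}$, produces the required preimage inside $\mathcal{S}^{\geq\sigma_1}$.

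For the inductive step, suppose $\mathcal{S}^{\geq(\sigma_1,\dots,\sigma_{r-1})}$ together with a surjection $q_{r-1}\colon\mathcal{S}^{\geq(\sigma_1,\dots,\sigma_{r-1})}\twoheadrightarrow \mathcal{T}:=\mathcal{S}(\sigma_1,\dots,\sigma_{r-1})$ have been constructed. Since $Y_r$ is Cartier in $Y_{r-1}$, applying the base case inside $Y_{r-1}$ to the coherent sheaf $\mathcal{T}$ and the integer $\sigma_r$ yields a coherent subsheaf $\mathcal{T}^{\geq\sigma_r}\subseteq\mathcal{T}$ together with a surjection onto $\mathcal{S}(\sigma_1,\dots,\sigma_r)$. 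Setting
\begin{align*}
\mathcal{S}^{\geq(\sigma_1,\dots,\sigma_r)}:=q_{r-1}^{-1}(\mathcal{T}^{\geq\sigma_r}),
\end{align*}
which is coherent since the preimage under a morphism of coherent sheaves of a coherent subsheaf is coherent, and defining $q_r$ as the composition of the restriction of $q_{r-1}$ with the base-case surjection, gives the required data: $q_r$ is surjective as a composition of surjections.

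The main obstacle is verifying \eqref{partialsheaf} in the inductive setting, i.e., that for $s\in H^0(U,\mathcal{S}^{\geq(\sigma_1,\dots,\sigma_{r-1})})$ the condition $q_{r-1}(s)\in\mathcal{T}^{\geq\sigma_r}$ is equivalent to $\nu_r(s)\geq\sigma_r$. This hinges on showing that $q_{r-1}$ faithfully records the iterated normalization of $s$: by construction the section $q_{r-1}(s)$ on $Y_{r-1}$ is, up to the twist encoded in the definition of $\mathcal{T}$, the outcome of successively dividing $s$ by $s_{Y_i}^{\sigma_i}$ and restricting at each stage, so that its order of vanishing along $Y_r\subseteq Y_{r-1}$ equals $\nu_r(s)$. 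Once this compatibility of $q_{r-1}$ with the recursive valuation is unwound, applying the base-case description \eqref{partialsheaf} on $Y_{r-1}$ to $\mathcal{T}$ and $Y_r$ translates membership in the preimage $q_{r-1}^{-1}(\mathcal{T}^{\geq\sigma_r})$ into the inequality $\nu_r(s)\geq\sigma_r$, closing the induction.
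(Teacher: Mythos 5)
Your argument follows the same route as the paper: induction on the flag length, with the base case given by twisting by $-\sigma_1 Y_1$ and the inductive step defining $\mathcal{S}^{\geq (\sigma_1,\dots,\sigma_{r+1})}$ as the preimage under $q_r$ of the subsheaf of $\mathcal{S}(\sigma_1,\dots,\sigma_r)$ consisting of sections vanishing to order at least $\sigma_{r+1}$ along $Y_{r+1}$, so that $q_{r+1}$ is surjective as a composition of surjections. The only real difference is in the base case, where you set $\mathcal{S}^{\geq \sigma_1}:=\mathcal{S}\cap \struc_X(D-\sigma_1 Y_1)$ while the paper takes the image of $\mathcal{S}\otimes_{\struc_X}\struc_X(-\sigma_1 Y_1)\to \mathcal{S}$; these coincide precisely when dividing a local section of $\mathcal{S}$ of order $\geq \sigma_1$ along $Y_1$ by $s_{Y_1}^{\sigma_1}$ again lands in $\mathcal{S}$ (needed for your $q_1$ to take values in $\mathcal{S}(\sigma_1)$), which is the same tacit identification the paper makes when it asserts that the order-of-vanishing subsheaf equals that image.
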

\begin{proof}
We will prove this using induction on $r$.
Let $r=1$. Then sections of $\mathcal{S}^{\geq (\sigma_1)}$ are those sections of $\mathcal{S}$ which vanish locally along $Y_1$ at least $\sigma_1$  times. These are given by the image of the following injection of coherent sheaves
\begin{align}
\mathcal{S}\otimes_{\struc_X} \struc_X(-\sigma_1 Y_1)\to \mathcal{S}.
\end{align} 
On an open set $U\subseteq X$ the above map of sections is defined by multiplication with a defining section $s_{Y_1 |U}$ of $Y_1$ to the power of $\sigma_1$.
Since $\mathcal{S}\otimes_{\struc_X} \struc_X(-\sigma_1 Y_1)$ is a coherent sheaf, this proves the claim for $r=1$.

Now let us suppose we have already defined $\mathcal{S}^{\geq (\sigma_1,\dots,\sigma_r)}$ as a coherent sheaf. Then we need to construct the sheaf $\mathcal{S}^{\geq (\sigma_1,\dots,\sigma_r,\sigma_{r+1})}$.
From the construction of the valuation $\nu_{Y_\bullet}$ we get a morphism of coherent sheaves
\begin{align}
q_r\colon \mathcal{S}^{\geq (\sigma_1,\dots, \sigma_r)}\to \mathcal{S}(\sigma_1\dots, \sigma_r). 
\end{align}
We claim that this morphism is surjective. We will prove this using the same induction on $r$.
For $r=1$, this map is just the restriction map to the closed subvariety $Y_1$
\begin{align}
q_1\colon \mathcal{S}^{\geq (\sigma_1)}\cong\mathcal{S}\otimes_X \struc_X(-\sigma_1 Y_1)\to \mathcal{S}_{|Y_1}\otimes_{Y_1} \struc_X(-\sigma_1 Y_1)_{|Y_1}
\end{align}
and hence surjective.
Now, let us additionally assume that $q_r$ is surjective.
We have the following natural inclusion
\begin{align}
\iota_r \colon \mathcal{S}(\sigma_1,\dots,\sigma_{r+1})_{|Y_r}\hookrightarrow \mathcal{S}(\sigma_1,\dots , \sigma_{r}) 
\end{align}
given by multiplication with a defining section of $Y_{r+1}$ in $Y_r$.
Now we define $\mathcal{S}^{\geq (\sigma_1,\dots, \sigma_{r+1})}$ as the preimage of $\mathcal{S}(\sigma_1,\dots, \sigma_{r+1})_{|Y_r}$ under the morphism $q_r$, yielding  the following diagram:
\begin{align}\label{partialsheafdiag}
\xymatrix{\mathcal{S}^{\geq (\sigma_1,\dots, \sigma_{r+1})} \ar@{->>}^-{p_r}[r]\ar@{^{(}->}[d] &   \mathcal{S}(\sigma_1, \dots , \sigma_{r+1})_{|Y_r}\ar@{^{(}->}[d]^{\iota_r} \\
 \mathcal{S}^{\geq (\sigma_1,\dots, \sigma_r)}\ar^-{q_{r}}@{->>}[r]& 
 \mathcal{S}(\sigma_1 ,\dots , \sigma_r).}
\end{align}
By the construction of the valuation, the sections of the coherent sheaf $\mathcal{S}^{\geq (\sigma_1,\dots, \sigma_{r+1})}$ are exactly the ones which satisfy equation \eqref{partialsheaf}. It remains to show that the morphism
\begin{align}
q_{r+1}\colon \mathcal{S}^{\geq (\sigma_1,\dots, \sigma_{r+1})} \to \mathcal{S}(\sigma_1, \dots, \sigma_{r+1})
\end{align}
is surjective.
But $q_{r+1}$ is just the composition of the surjection $p_r$ with the surjective restriction morphism. Hence, the surjectivity follows.
\end{proof}

\subsection{Generic Newton-Okounkov Body}

Let  ($X_T,S_{T\bullet},\mathcal{Y}_{\bullet})$ be an admissible family of Newton-Okounkov bodies over $T$. In this paragraph we want to prove that for a very general choice of  $t\in T$ the Newton-Okounkov bodies $\Delta_{Y_{\bullet,t}}(X_t, \mathcal{S}_{t,\bullet})$ all coincide.
The idea of the proof is to show that for a very general  choice $t\in T$ the dimension of the space of  global sections
\begin{align}
H^0(X_t, (\mathcal{S}_{k,t})^{\geq(\sigma)})
\end{align}
is independent from $t$.
The main issue of the proof is to show that we have an equality of coherent sheaves $(\mathcal{S}_{T,k}^{\geq (\sigma)})_{t}= (\mathcal{S}_{t,k})^{\geq (\sigma)}$. Once we have established this equality, we can use the theorem of generic flatness to deduce the constancy of the dimension.

We first need some helpful lemmata.
\begin{lem}
The commutative diagram constructed in \eqref{partialsheafdiag} gives rise to the following commutative diagram, where the rows are exact:

\begin{align} \label{sheafexdiag}
\xymatrixcolsep{1pc}\xymatrix{
0 \ar[r]& 	\mathcal{S}^{\geq (\sigma_1,\dots,\sigma_{r}+1)} \ar[r] \ar[d]^\cong &	\mathcal{S}^{\geq (\sigma_1,\dots, \sigma_{r+1})} \ar[r]^-{p_r}\ar@{^{(}->}[d] &   \mathcal{S}(\sigma_1,\dots , \sigma_{r+1})_{|Y_{r}}\ar@{^{(}->}[d]^{\iota_r} \ar[r] & 0 \\
0\ar[r] & \mathcal{S}^{\geq (\sigma_1,\dots,\sigma_{r}+1)} \ar[r] & \mathcal{S}^{\geq (\sigma_1,\dots, \sigma_r)}\ar[r]^-{q_{r}}& 
 \mathcal{S}(\sigma_1, \dots , \sigma_r)\ar[r]& 0.}
\end{align}

\end{lem}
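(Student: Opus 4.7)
The diagram in \eqref{partialsheafdiag} already gives the commutativity of the right square and the surjectivity of $p_r$ (by construction, since $\mathcal{S}^{\geq(\sigma_1,\dots,\sigma_{r+1})}$ is defined as the preimage under $q_r$ of $\mathcal{S}(\sigma_1,\dots,\sigma_{r+1})_{|Y_r}$). Surjectivity of $q_r$ has already been proven in the preceding theorem. So the plan is simply to identify both horizontal kernels with $\mathcal{S}^{\geq(\sigma_1,\dots,\sigma_r+1)}$ and to check that the left vertical map is the identity.

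For the bottom row I would argue by induction on $r$. In the base case $r=1$, the map $q_1$ is the restriction to $Y_1$ of $\mathcal{S}\otimes_{\struc_X}\struc_X(-\sigma_1 Y_1)$; multiplying by a local defining section $s_{Y_1}$ identifies its kernel with $\mathcal{S}\otimes_{\struc_X}\struc_X(-(\sigma_1+1) Y_1) = \mathcal{S}^{\geq(\sigma_1+1)}$. For the inductive step, recall that $q_{r+1}$ was built (in the preceding theorem) as the composition $p_r$ followed by restriction from $\mathcal{S}(\sigma_1,\dots,\sigma_{r+1})_{|Y_r}$ to $\mathcal{S}(\sigma_1,\dots,\sigma_{r+1})$ along $Y_{r+1}$, so a local section $s$ of $\mathcal{S}^{\geq(\sigma_1,\dots,\sigma_{r+1})}$ lies in $\ker(q_{r+1})$ iff $p_r(s)$ vanishes to order at least $\sigma_{r+1}+1$ along $Y_{r+1}$; unwinding the definition of the partial sheaf shows this cuts out exactly $\mathcal{S}^{\geq(\sigma_1,\dots,\sigma_r,\sigma_{r+1}+1)}$.

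For the top row, note that $p_r$ is by definition the restriction of $q_r$ to $\mathcal{S}^{\geq(\sigma_1,\dots,\sigma_{r+1})}$, hence $\ker(p_r)=\ker(q_r)\cap\mathcal{S}^{\geq(\sigma_1,\dots,\sigma_{r+1})}$. One checks directly from the defining condition $\nu_{Y_\bullet}(s)\geq(\sigma_1,\dots,\sigma_r+1)$ (lexicographically) that $\mathcal{S}^{\geq(\sigma_1,\dots,\sigma_r+1)}\subseteq \mathcal{S}^{\geq(\sigma_1,\dots,\sigma_{r+1})}$: any section already vanishing in valuation beyond $(\sigma_1,\dots,\sigma_r)$ trivially satisfies any constraint imposed by an additional coordinate. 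Combined with the identification of $\ker(q_r)$ from the previous paragraph, this gives $\ker(p_r)=\mathcal{S}^{\geq(\sigma_1,\dots,\sigma_r+1)}$ and makes the left vertical arrow the identity, concluding commutativity and exactness of both rows.

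The main obstacle will be the inductive identification of $\ker(q_r)$: one has to keep careful track of the various tensor products by line bundles $\struc_{Y_i}(-\sigma_j Y_j)_{|Y_i}$ and of the fact that $q_r$ is built iteratively as ``divide and restrict'' rather than as a single restriction, so a clean local description in terms of defining sections is needed to avoid ambiguities. Once the inductive step is set up with local generators, however, the exactness is immediate from the standard short exact sequence $0\to\struc_X(-Y_{i+1})\to\struc_X\to\struc_{Y_{i+1}}\to 0$ twisted appropriately.
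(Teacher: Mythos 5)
Your proposal is correct and follows essentially the same route as the paper: reduce to identifying the two kernels, compute $\ker(q_r)$ from the iterative ``divide and restrict'' construction together with the lexicographic definition \eqref{partialsheaf}, and then obtain $\ker(p_r)=\ker(q_r)\cap\mathcal{S}^{\geq(\sigma_1,\dots,\sigma_{r+1})}=\mathcal{S}^{\geq(\sigma_1,\dots,\sigma_r+1)}$ from the inclusion you note. The only cosmetic difference is that you package the kernel computation as an induction on $r$, whereas the paper does it as a direct local calculation on sections over opens $U$ (splitting into the cases where the restricted flag has length $<r$ or exactly $r$); the content is the same.
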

\begin{proof} The only thing left to prove is the identity of the kernels of the horizontal maps $q_r$ and $p_r$.
We want to show that the induced map of sections  on the lower row is exact.
Let us first assume that $U\subseteq X$ is an open subset such that the induced flag $\mathcal{Y}_{\bullet|U}$ is of length $r^\prime<r$.
Then 
\begin{align}
H^0(U,\mathcal{S}^{\geq (\sigma_1,\dots,\sigma_{r}+1)})=H^0(U,\mathcal{S}^{\geq (\sigma_1,\dots,\sigma_{r^\prime})})= H^0(U,\mathcal{S}^{\geq (\sigma_1,\dots, \sigma_r)}).
\end{align}
Furthermore, $H^0(U,\mathcal{S}(\sigma_1,\dots,\sigma_r))=\{0\}$, since it is supported on $\mathcal{Y}_{r}$. This proves the exactness of sections on such $U$.
Now let $U$ be chosen such that $Y_{\bullet|U}$ is of maximal length $r$.
 We calculate the kernel of $q_r(U)$. Let $s\in H^0(U,\mathcal{S}^{\geq (\sigma_1,\dots,\sigma_{r}+1)})$. By definition of the valuation and the construction of the map $q_r$, the section $s$ will be sent to zero. However, if $\nu_{Y_{\bullet |U}}(s)=(\sigma_1,\dots, \sigma_r)$, then it is also clear that the image of $s\neq 0$ under $q_r$ does not vanish. This shows that $\ker (q_{r |U})=\mathcal{S}^{\geq (\sigma_1,\dots,\sigma_{r}+1)}$. The kernel of $p_r$ can be calculated using diagram \eqref{partialsheafdiag} as 
\begin{align}
\ker p_r= \ker q_r \cap \mathcal{S}^{\geq (\sigma_1,\dots,\sigma_{r},\sigma_{r+1})}= \mathcal{S}^{\geq (\sigma_1,\dots,\sigma_{r}+1)}.
\end{align}
\end{proof}

The next lemma seems to be common folklore knowledge. However, as a matter of a missing reference, we will prove this anyway.
\begin{lem}\label{lemflatbc}
Let $S$ be a noetherian scheme and $i\colon Z\to X$ be a closed immersion of noetherian $S$-schemes such that $Z$ is flat over $S$. Let $T$ be another $S$-scheme and $i_T\colon Z\times_S T \to X\times_S T$ be the closed immersion given by the following fiber diagram
\begin{align} \label{diagbc}
\xymatrix{Z\times_S T \ar[r]^{p_Z} \ar[d]^{i_T} & Z \ar[d]^i \\
X\times_S T \ar[r]^{p_X}& X.}
\end{align}
Then for each coherent $\struc_Z$-module $E$ we have a functorial isomorphism
\begin{align}
i_{T*}p_Z^*E\cong p_X^* i_*E.
\end{align}
\end{lem}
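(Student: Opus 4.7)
The plan is to construct a canonical comparison morphism $\varphi\colon p_X^* i_* E \to i_{T*} p_Z^* E$ via adjunction and then to verify it is an isomorphism by a local calculation on modules. First, I would produce $\varphi$ by noting that, by the adjunction $(i_T^*, i_{T*})$, it suffices to give a morphism $i_T^* p_X^* i_* E \to p_Z^* E$. The commutativity of the fiber square \eqref{diagbc} supplies the base-change identity $i_T^* p_X^* \cong p_Z^* i^*$, and since $i$ is a closed immersion the counit $i^* i_* E \to E$ is an isomorphism of quasi-coherent sheaves. Composing these two natural isomorphisms gives $i_T^* p_X^* i_* E \cong p_Z^* i^* i_* E \cong p_Z^* E$, and I take $\varphi$ to be its adjoint. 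The construction is manifestly functorial in $E$.

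The second step is to verify that $\varphi$ is an isomorphism. Since this is local on $S$, $X$, and $T$, I may assume all four schemes are affine: $S = \spec A$, $X = \spec R$, $Z = \spec(R/I)$, and $T = \spec B$, where $R/I$ is flat over $A$. Writing $E = \widetilde{M}$ for an $R/I$-module $M$, one computes that $p_X^* i_* E$ corresponds to $M \otimes_R (R \otimes_A B) = M \otimes_A B$ as an $R \otimes_A B$-module, while $p_Z^* E$ corresponds to $M \otimes_{R/I}((R/I) \otimes_A B) = M \otimes_A B$ as an $(R/I) \otimes_A B$-module, so that $i_{T*} p_Z^* E$ is the same abelian group $M \otimes_A B$ regarded as an $R \otimes_A B$-module through the projection $R \otimes_A B \twoheadrightarrow (R/I) \otimes_A B$. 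Because $I$ annihilates $M$, the $R \otimes_A B$-action on the first module also factors through $(R/I) \otimes_A B$, and tracing through the construction of $\varphi$ shows it is the identity map on $M \otimes_A B$ in these coordinates, hence an isomorphism.

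The flatness of $Z$ over $S$ enters through the vanishing of $\mathrm{Tor}_1^A(R/I, B)$, which guarantees that $I \otimes_A B \to R \otimes_A B$ is injective and therefore that $i_T$ is the closed immersion cut out precisely by the pulled-back ideal sheaf $p_X^*\mathcal{I}$; this is what makes the module identifications above canonical rather than only up to a nuisance quotient, and it is also what later allows the generic flatness argument to be applied compatibly on fibers. The main technical obstacle is the careful bookkeeping of the several $R \otimes_A B$-module structures in the affine step and checking that $\varphi$ really is the identity under these identifications; the rest is essentially formal. Once this local verification is complete, the isomorphism glues to a global one automatically by the functoriality of the adjunction construction.
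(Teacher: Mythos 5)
Your proof is correct and takes essentially the same route as the paper's: both reduce to the affine case, identify both sides with $M\otimes_A B$ using the fact that $I$ annihilates $M$, and invoke the counit isomorphisms $i^*i_*\cong \mathrm{id}$ and $i_T^*i_{T*}\cong \mathrm{id}$ together with the commutativity of the square, your packaging (construct the global comparison map by adjunction first, then verify it locally) being if anything the cleaner of the two. One small remark: the flatness of $Z$ over $S$ is not in fact needed for the stated isomorphism --- pushforward of quasi-coherent sheaves along a closed (indeed any affine) immersion commutes with arbitrary base change, and your module identifications go through without the $\mathrm{Tor}$-vanishing --- but the paper leans on the hypothesis in the same spot, so your account is faithful to its use.
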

\begin{proof}
This question is local on $X$, $T$ and $S$. So let us set, without loss of generality, $X=\spec A$, $T=\spec B$ and $S=\spec R$. Then there is an ideal $I\subseteq A$ such that $Z=\spec (A/I)$. 
Consider the short exact sequence 
\begin{align}
0\to I \to A \to A/I\to 0.
\end{align}
Since $A/I$ is flat over $R$, we may tensor this sequence by $\otimes_R B$ and get
\begin{align}
0\to I\otimes_R B \to A \otimes_R B \to A/I \otimes_R B\to 0.
\end{align}

Hence, diagram \eqref{diagbc} gives rise to the following commutative diagram of rings
\begin{align}
\xymatrix{ A/I \ar[r]& (A\otimes_R B)/ (I\otimes_R B)  \\
A \ar[u] \ar[r]& A\otimes_R B. \ar[u]} 
\end{align}
Let $M$ be the $A/I$-module such that $\tilde{M}=E$. Then we have
\begin{align}
p_X^* i_*E= (  _A M \otimes_R A \otimes_R B)^{\sim}= (  _A M \otimes_A B)^{\sim}.      
\end{align}
Clearly, $I\cdot_AM=0$ and from this we deduce $(I\otimes_A B)({}_A M \otimes_A B )=0$. But this means that $p_X^* i_*E$ can be viewed as a sheaf over $(A\otimes_R B)/(I\otimes_R B)$. So there is a coherent $\struc_{Z\otimes_S T}$-module $L$ such that $i_{T*}L\cong p_X^* i_*E$. Taking $i_T^*$ of this isomorphism, gives us an isomorphism on $X\otimes_S T$
\begin{align}
i_T^*i_{T*} L \cong i_{T}^*p_X^*i_*E= p_Y^*i^*i_*E.
\end{align}
Since the canonical morphisms $i_T^*i_{T*} L\cong L $ and $i^*i_*E\cong E$ are isomorphisms, we conclude that $L\cong p_Y^*E$. Taking $i_{T*}$ of this isomorphism then gives the desired result.

\end{proof}
\begin{lem}\label{lemgen}Let $(X_T,S_{T,\bullet},\mathcal{Y}_\bullet)$ be an admissible family of Newton-Okounkov bodies and $(\sigma_1,\dots, \sigma_r)\in \mathbb{N}^{r}$. Let furthermore $\mathcal{Y}_i$ be flat over $T$ for all $i=1,\dots,d$. Consider the natural map
\begin{align}
\iota \colon \mathcal{S}_{T,k}(\sigma_1,\dots, \sigma_{r+1})_{|\mathcal{Y}_{r}}\hookrightarrow \mathcal{S}_{T,k}(\sigma_1,\dots, \sigma_r).
\end{align}
Viewing this as a map of coherent sheaves on $X_T$, for $t\in T$ the map $\iota$ 
 pulls back via the closed immersion $i_t\colon X_t\hookrightarrow X_T$ to the natural map
 \begin{align}
i_t^*\iota\colon \mathcal{S}_{t,k}(\sigma_1,\dots, \sigma_{r+1})_{|Y_{t,r}}\hookrightarrow \mathcal{S}_{t,k}(\sigma_1,\dots, \sigma_r)
\end{align}
viewed as a morphism of coherent sheaves on $X_t$.
 
\end{lem}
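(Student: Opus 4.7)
The plan is to break $\iota$ up into its two constituent operations — restriction to the closed subscheme $\mathcal{Y}_{r+1}\subseteq \mathcal{Y}_r$ (which, unwinding the abbreviation, is exactly the inclusion $\iota$ above corresponds to multiplication by a defining section of $\mathcal{Y}_{r+1}$ inside $\mathcal{Y}_r$) — and verify separately that each operation is compatible with pullback along $i_t\colon X_t\hookrightarrow X_T$. Once both pieces are shown to commute with $i_t^\ast$, composing the two identifications will yield the stated identity.

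First I would establish that the formation of the sheaves $\mathcal{S}_{T,k}(\sigma_1,\dots,\sigma_r)$ itself commutes with restriction to the fiber. Writing $j_{T,r}\colon \mathcal{Y}_r\hookrightarrow X_T$ and $j_{t,r}\colon Y_{t,r}\hookrightarrow X_t$ for the closed immersions, I would iterate Lemma \ref{lemflatbc} along the flag: at each step $i$, the sheaf currently obtained is pushed forward from $\mathcal{Y}_i$, which by hypothesis is flat over $T$, so Lemma \ref{lemflatbc} yields a canonical isomorphism $i_t^\ast j_{T,i\ast}(-) \cong j_{t,i\ast} p_{\mathcal{Y}_i}^\ast(-)$, where $p_{\mathcal{Y}_i}$ denotes the base change of $\mathcal{Y}_i$ to $t$. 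Since tensor product commutes with pullback, and since the line bundle $\mathcal{O}_{\mathcal{Y}_{i-1}}(-\sigma_i \mathcal{Y}_i)$ pulls back to $\mathcal{O}_{Y_{t,i-1}}(-\sigma_i Y_{t,i})$ (here one uses that $\mathcal{Y}_i$ is a Cartier divisor inside $\mathcal{Y}_{i-1}$, so a local defining section restricts to a local defining section of $Y_{t,i}$ inside $Y_{t,i-1}$), an inductive bookkeeping argument gives canonical identifications
\begin{align}
i_t^\ast\bigl(\mathcal{S}_{T,k}(\sigma_1,\dots,\sigma_r)\bigr)\;\cong\;\mathcal{S}_{t,k}(\sigma_1,\dots,\sigma_r),
\end{align}
and similarly for the $(r+1)$-tuple.

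Next I would identify the pullback of the map $\iota$ itself. By construction, $\iota$ is obtained, on $\mathcal{Y}_r$, as multiplication by a defining section $s_{\mathcal{Y}_{r+1}}^{\sigma_{r+1}}$ of $\sigma_{r+1}\mathcal{Y}_{r+1}\subseteq \mathcal{Y}_r$ tensored with the identity on the remaining factors. Under the base change to $t$, this defining section restricts to a defining section of $\sigma_{r+1}Y_{t,r+1}\subseteq Y_{t,r}$ (again using that $\mathcal{Y}_{r+1}$ is a Cartier divisor in $\mathcal{Y}_r$, so the base change remains Cartier). Hence, after applying the canonical identifications from the previous step, the pullback $i_t^\ast\iota$ is precisely multiplication by this restricted defining section, which is the definition of the natural inclusion
\begin{align}
\mathcal{S}_{t,k}(\sigma_1,\dots,\sigma_{r+1})_{|Y_{t,r}}\hookrightarrow \mathcal{S}_{t,k}(\sigma_1,\dots,\sigma_r).
\end{align}

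The main obstacle I anticipate is the careful bookkeeping in the inductive step, because the sheaves $\mathcal{S}_{T,k}(\sigma_1,\dots,\sigma_r)$ live naturally on $\mathcal{Y}_r$ but are being viewed on $X_T$ through pushforward, so Lemma \ref{lemflatbc} has to be invoked at every tensor factor and one must check that the flatness of $\mathcal{Y}_i$ over $T$ is enough to preserve exactness and commute with the various pullbacks. Once this is organized, the fact that $\iota$ is built purely from multiplication by local defining sections of Cartier divisors (which behave well under flat base change) will make the final compatibility essentially formal.
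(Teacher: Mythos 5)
Your proposal is correct and follows essentially the same route as the paper: both use Lemma \ref{lemflatbc} (with the flatness of the $\mathcal{Y}_i$ over $T$) to identify the pullback of the pushed-forward sheaf on $\mathcal{Y}_r$ with the corresponding sheaf on $Y_{t,r}$, check that the tensor factors $\struc(-\sigma_i\mathcal{Y}_i)$ restrict to $\struc(-\sigma_i Y_{t,i})$ via the Cartier divisor hypothesis, and then observe that $\iota$ is multiplication by a power of a defining section of $\mathcal{Y}_{r+1}$ in $\mathcal{Y}_r$, which restricts to the analogous defining section on the fiber. The only cosmetic difference is that you iterate Lemma \ref{lemflatbc} along the whole flag, whereas the paper applies it once for $\mathcal{Y}_r$ and handles the remaining tensor factors directly.
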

\begin{proof}
We consider the following fiber diagram 
\begin{align}
\xymatrix{Y_{r,t}=\mathcal{Y}_{r}\times_T k(t) \ar[r]\ar[d] & \mathcal{Y}_r \ar[d]\\
X_t=X_T\times_T k(t) \ar[r]& X_T}
\end{align}
By the previous Lemma \ref{lemflatbc}, the restriction of the coherent sheaf $\mathcal{S}_{T,\bullet}(\sigma_1,\dots, \sigma_r)$ on  $X_T$ to $X_t$ is the same as the restriction of $\mathcal{S}_{T,\bullet}(\sigma_1,\dots,\sigma_r)$ (viewed as a sheaf on $\mathcal{Y}_r$) to $Y_{r,t}$ and then viewing  it as a sheaf on $X_t$.
This means that
\begin{align}
\mathcal{S}_{T,\bullet}(\sigma_1,\dots,\sigma_r)_{|X_t}=\mathcal{S}_{T,\bullet}(\sigma_1,\dots,\sigma_r)_{|Y_{r,t}}.
\end{align}

For $i\leq r+1$ we have
\begin{align} 
 (\struc_{X_T}(-\sigma_i \mathcal{Y}_i)_{|\mathcal{Y}_r})_{|Y_{r,t}}= (\struc_{X_T}(-\sigma_i \mathcal{Y}_i)_{|X_t}))_{|Y_{r,t}}=
 \struc_{X_t}(-\sigma_i Y_{i,t})_{|Y_{r,t}}.
\end{align}
Since also $(\mathcal{S}_{T,k})_{| Y_{r,t}}=\mathcal{S}_{t,k|Y_{r,t}}$ we can follow that 
\begin{align}
 (\mathcal{S}_{T,k}(\sigma_1,\dots, \sigma_{r}))_{|X_t}\cong\mathcal{S}_{t,k}(\sigma_1,\dots, \sigma_r).
\end{align}
Similarly, we have 
\begin{align}
((\mathcal{S}_{T,k}(\sigma_1,\dots, \sigma_{r+1}))_{|\mathcal{Y}_{r}})_{|X_t} \cong\mathcal{S}_{t,k}(\sigma_1,\dots, \sigma_{r+1})_{|Y_{t,r}}
\end{align}
Finally, the restricted morphism $i_t^*\iota$ is given by multiplication with a defining section of $Y_{r}$ to the power of $\sigma_{r+1}$ and hence is injective.
\end{proof}

\begin{lem} \label{lemgen2}
Let ($X_T, S_{T,\bullet},\mathcal{Y}_\bullet)$ be an admissible family of Newton-Okounkov bodies over $T$. 
For a very general point $t\in T$ we have for every $k\in \mathbb{N}$  and $\sigma\in \mathbb{Z}^r$
\begin{align}
(\mathcal{S}_{T,k}^{\geq (\sigma)})_{t}= (\mathcal{S}_{t,k})^{\geq (\sigma)}.
\end{align}

\end{lem}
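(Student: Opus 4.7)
The plan is a double induction: on the length $r$ of the partial flag, and within each $r$ on the last coordinate $\sigma_r$. The base case $r=0$ is trivial since $\mathcal{S}_{T,k}^{\geq()}=\mathcal{S}_{T,k}$ by construction and its fiber over $t$ is $\mathcal{S}_{t,k}$ by the standing hypothesis of Section \ref{famok}. The only nontrivial input throughout the argument is the theorem of generic flatness \cite[Corollary 10.84]{GW}: each application yields a dense open $U\subseteq T$, and the ``very general'' $t$ we seek is taken in the intersection of the countably many opens that arise, indexed by pairs $(k,\sigma)$ with $k\in\mathbb{N}$ and $\sigma\in\mathbb{Z}^r$, $r\leq d$.

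For the step $\sigma_r\rightsquigarrow\sigma_r+1$, I would use the lower short exact sequence in \eqref{sheafexdiag}:
\[
0\to\mathcal{S}_{T,k}^{\geq(\sigma_1,\dots,\sigma_r+1)}\to\mathcal{S}_{T,k}^{\geq(\sigma_1,\dots,\sigma_r)}\xrightarrow{q_r}\mathcal{S}_{T,k}(\sigma_1,\dots,\sigma_r)\to 0.
\]
By generic flatness applied to the quotient, and to each flag stratum $\mathcal{Y}_i\to T$ (to bring Lemma \ref{lemgen} into play), there is a dense open $U_{k,\sigma}\subseteq T$ on which all these sheaves are flat over $T$. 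Restricting along $i_t\colon X_t\hookrightarrow X_T$ for $t\in U_{k,\sigma}$ then preserves exactness (flatness of the rightmost term kills $\tor_1$), and Lemma \ref{lemgen} together with the inductive hypothesis applied to $\mathcal{S}_{T,k}^{\geq(\sigma_1,\dots,\sigma_r)}$ canonically identifies the restricted sequence with the sequence defining $\mathcal{S}_{t,k}^{\geq(\sigma_1,\dots,\sigma_r+1)}$ on $X_t$. Comparing kernels gives the desired identity.

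For the step $r\rightsquigarrow r+1$, I would invoke the pullback square \eqref{partialsheafdiag}, in which $\mathcal{S}_{T,k}^{\geq(\sigma_1,\dots,\sigma_{r+1})}$ sits as the preimage under $q_r$ of the subsheaf $\mathcal{S}_{T,k}(\sigma_1,\dots,\sigma_{r+1})_{|\mathcal{Y}_r}$. Shrinking $T$ again so that both this subsheaf and the ambient quotient $\mathcal{S}_{T,k}(\sigma_1,\dots,\sigma_r)$ become flat over $T$, the restriction along $i_t$ commutes with taking preimages; Lemma \ref{lemgen} identifies the restriction of $\iota_r$ with the analogous inclusion on $X_t$, and the identification of $(\mathcal{S}_{T,k}^{\geq(\sigma_1,\dots,\sigma_{r+1})})_t$ with $\mathcal{S}_{t,k}^{\geq(\sigma_1,\dots,\sigma_{r+1})}$ falls out of the universal property of the fibre product.

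The main technical obstacle is ensuring that every one of the base changes appearing in the construction is exact. This forces simultaneous generic flatness of all the quotient sheaves $\mathcal{S}_{T,k}(\sigma_1,\dots,\sigma_j)$ and of the flag strata $\mathcal{Y}_i$ over $T$; each single identity $(k,\sigma)$ requires only finitely many such flatness conditions, hence one dense open $U_{k,\sigma}\subseteq T$, but the identity is asserted for all $(k,\sigma)$ at once. Since there are only countably many pairs, the intersection $\bigcap_{k,\sigma}U_{k,\sigma}$ is the complement of a countable union of proper closed subsets of $T$ and therefore contains every very general point, which is exactly the scope of the lemma.
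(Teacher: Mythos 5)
Your argument is correct and follows essentially the same route as the paper: a double induction on the flag length and the last valuation coordinate, generic flatness applied to the sheaves appearing in diagrams \eqref{partialsheafdiag} and \eqref{sheafexdiag}, Lemma \ref{lemgen} to identify the quotient terms on the fibres, and exactness of base change to compare kernels (the paper phrases this last comparison as an application of the Five Lemma inside an inner induction on $\sigma_{r+1}$). One bookkeeping caveat: in your step $r\rightsquigarrow r+1$, flatness of the subsheaf $\mathcal{S}_{T,k}(\sigma_1,\dots,\sigma_{r+1})_{|\mathcal{Y}_r}$ and of the ambient sheaf $\mathcal{S}_{T,k}(\sigma_1,\dots,\sigma_r)$ does not by itself make restriction commute with taking preimages; you must also impose generic flatness on the cokernel of $\iota_r$ (the paper explicitly adds ``the cokernels of the vertical morphisms'' to its list of sheaves to be made flat), after which your argument goes through unchanged.
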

\begin{proof}
First let us fix the number $k$ and abbreviate $\mathcal{S}:= \mathcal{S}_k$.
We will prove the lemma using induction on $r$. 
Let $r=1$. Then we can identify $\mathcal{S}_T^{\geq (\sigma_1)}\cong \mathcal{S}_T \otimes_{X_T} \struc_{X_t}(-\sigma_1 \mathcal{Y}_1)$. Pulling this back to the fiber $X_t$ leads to
\begin{align}
(\mathcal{S}_T \otimes_{X_T} \struc_{X_T}(-\sigma_1 \mathcal{Y}_1))_{|X_t}&= (\mathcal{S}_T)_{|X_t}\otimes_{X_t} \struc_{X_t}(-\sigma_1\mathcal{Y}_1)_{|X_t}= \\
&= \mathcal{S}_t \otimes_{X_t} \struc_{X_t}(-\sigma_1 Y_{1,t})\cong \mathcal{S}_t^{\geq (\sigma_1)}.
\end{align}
This proves the lemma for $r=1$.

Now we prove the lemma for $r+1$ assuming that it holds for $r$.
Using the  theorem of generic flatness \cite[Corollary 10.85]{GW} we can find  open subsets $V_{\sigma,k}\subset T$ such that all coherent sheaves occurring in diagram \eqref{sheafexdiag} as well as the cokernels of the vertical morphisms are flat over $T$. Furthermore, we can also assume that all the $\mathcal{Y}_i$ for $i=1,\dots, r+1$ are flat over $V_{\sigma,k}$.
Then we let $t$ be in $\bigcap_{\sigma \in \mathbb{Z}^{r+1},k\in \mathbb{N}}{V_{\sigma,k}}$.
Due to the flatness, the induction hypothesis and Lemma \ref{lemgen}, we can pull back the right hand square of diagram  \eqref{sheafexdiag} and obtain the following square:
\begin{align}
\xymatrix{(\mathcal{S}_T^{\geq (\sigma_1,\dots, \sigma_{r+1})})_t \ar@{->>}[r]\ar@{^{(}->}[d] &   \mathcal{S}_t(\sigma_1,\dots, \sigma_{r+1})_{|Y_{r,t}}\ar@{^{(}->}[d]^{\iota_r} \\
 \mathcal{S}_t^{\geq (\sigma_1,\dots, \sigma_r)}\ar@{->>}[r]^{q_r}& 
 \mathcal{S}_t(\sigma_1,\dots , \sigma_r).}
\end{align}
The above diagram implies that we get an injection from $(\mathcal{S}_T^{\geq (\sigma_1,\dots, \sigma_{r+1})})_t$ to the inverse image of  $\mathcal{S}_t(\sigma_1,\dots, \sigma_{r+1})_{|Y_{r,t}}$ under the map $q_r$, which is by construction equal to  $\mathcal{S}_t^{\geq (\sigma_1,\dots ,\sigma_{r+1})}$.
We also have an isomorphism $\mathcal{S}_t(\sigma_1,\dots, \sigma_{r+1})\cong \left(\mathcal{S}_T(\sigma_1,\dots,\sigma_{r+1})\right)_t$ and therefore we get the following commutative diagram
\begin{align}
\xymatrixcolsep{1pc}
\xymatrix{ 0 \ar[r] &  (\mathcal{S}_T)^{\geq(\sigma_1,\dots,\sigma_{r+1}+1)}_t \ar[r] \ar@{^{(}->}[d]^{\iota_t}   &   (\mathcal{S}_T)^{\geq(\sigma_1,\dots,\sigma_{r+1})}_t \ar[r] \ar@{^{(}->}[d]^{\iota_t} &  \left(\mathcal{S}_T(\sigma_1,\dots,\sigma_{r+1})\right)_t \ar[d]^\cong  \ar[r]& 0 \\
 0 \ar[r] &  (\mathcal{S}_t)^{\geq(\sigma_1,\dots,\sigma_{r+1}+1)} \ar[r]    &   (\mathcal{S}_t)^{\geq(\sigma_1,\dots,\sigma_{r+1})} \ar[r]  &  \mathcal{S}_t(\sigma_1,\dots ,\sigma_{r+1})\ar[r] & 0. } 
\end{align}

We will use a second induction argument on $\sigma_{r+1}$. Let $\sigma_1,\dots,\sigma_{r}$ as well as $t\in T$ be fixed. Let $\sigma_{r+1}=0$. Then we have $\mathcal{S}_t^{\geq(\sigma_1,\dots,\sigma_r,0)}=\mathcal{S}_t^{\geq(\sigma_1,\dots,\sigma_r)}$ as well as $\mathcal{S}_T^{\geq(\sigma_1,\dots, \sigma_r,0)}=\mathcal{S}_T^{\geq(\sigma_1,\dots, \sigma_r)}$. Hence, for $\sigma_{r+1}=0$ the desired identity follows from the induction hypothesis on $r$.
Now let us assume, we know that the desired identity of sheaves is true for $\sigma_{r+1}$. Then we want to prove it is true for $\sigma_{r+1}+1$. However, this follows by using the above commutative diagram and the Five lemma. Indeed, by our induction hypothesis, the middle vertical morphism is an isomorphism. Hence, the left vertical morphism must be an isomorphism as well.
This proves the claim. 
\end{proof}

\begin{thm} \label{thmgenNO}
Let $(X_T,S_{T,\bullet},\mathcal{Y}_\bullet)$ be an admissible family of Newton-Okounkov bodies over $T$.
Then for a very general $t\in T$  the Newton-Okounkov bodies
\begin{align}
\Delta_{Y_{t,\bullet}}(S_{t,\bullet})
\end{align}
all coincide.
\end{thm}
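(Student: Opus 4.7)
The plan is to reduce the claim to showing that the valuation images $\nu_{Y_{t,\bullet}}(S_{t,k}\setminus\{0\})\subseteq \mathbb{N}^d$ are independent of $t$ for all $k\in\mathbb{N}$, since the Newton-Okounkov body is the closed convex hull of $\bigcup_k (1/k)\cdot \nu_{Y_{t,\bullet}}(S_{t,k})$. For fixed $t,k$, this finite image is determined by the numerical function
\begin{align}
h_{t,k,\sigma}:=\dim H^0(X_t,\mathcal{S}_{t,k}^{\geq\sigma}),\quad \sigma\in\mathbb{N}^d,
\end{align}
because the filtration by valuation identifies $h_{t,k,\sigma}$ with the number of elements of $\nu_{Y_{t,\bullet}}(S_{t,k})$ that are lex-greater than or equal to $\sigma$. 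Hence it suffices to show that each $h_{t,k,\sigma}$ is independent of $t$ on a very general subset of $T$.

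The key ingredient is Lemma \ref{lemgen2}, which produces a very general subset $V\subseteq T$ such that $(\mathcal{S}_{T,k}^{\geq\sigma})_{|X_t}\cong \mathcal{S}_{t,k}^{\geq\sigma}$ for every $k$ and $\sigma$ simultaneously. For each pair $(k,\sigma)$ I would then apply generic flatness \cite[Corollary 10.85]{GW} to the coherent sheaf $\mathcal{S}_{T,k}^{\geq\sigma}$ on $X_T$ over $T$, obtaining a dense open $U'_{k,\sigma}\subseteq T$ over which both $\pi_T$ and $\mathcal{S}_{T,k}^{\geq\sigma}$ become flat. On $U'_{k,\sigma}$ the semicontinuity theorem implies that $t\mapsto h^0(X_t,(\mathcal{S}_{T,k}^{\geq\sigma})_{|X_t})$ is upper semicontinuous; by irreducibility of $T$, its minimum is attained on a dense open subset $U_{k,\sigma}\subseteq U'_{k,\sigma}$, on which the function is constant. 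Combined with Lemma \ref{lemgen2}, this yields the constancy of $h_{t,k,\sigma}$ for all $t\in V\cap U_{k,\sigma}$.

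Finally, set
\begin{align}
W:=V\cap\bigcap_{(k,\sigma)\in\mathbb{N}\times\mathbb{N}^d}U_{k,\sigma}.
\end{align}
This is a countable intersection of dense open subsets of the very general subset $V$, hence itself very general. For any $t\in W$ every $h_{t,k,\sigma}$ equals its generic value, so the semigroups $\Gamma(S_{t,\bullet})$ all coincide, and consequently so do the Newton-Okounkov bodies $\Delta_{Y_{t,\bullet}}(S_{t,\bullet})$.

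The substantive technical difficulty has already been addressed in Lemma \ref{lemgen2}, namely the compatibility of the partial sheafification construction with restriction to fibers. What remains is essentially bookkeeping: one must ensure that generic flatness and semicontinuity can be applied compatibly, simultaneously across all the countably many pairs $(k,\sigma)$, together with the sheaf identifications from Lemma \ref{lemgen2}. The countable-intersection step above is precisely what packages this compatibility.
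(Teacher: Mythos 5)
Your proposal is correct and follows essentially the same route as the paper: invoke Lemma \ref{lemgen2} to identify $(\mathcal{S}_{T,k}^{\geq(\sigma)})_t$ with $\mathcal{S}_{t,k}^{\geq(\sigma)}$, apply generic flatness and semicontinuity for each pair $(k,\sigma)$ to make $h^0$ constant on a dense open subset, intersect over the countably many pairs, and conclude since these dimensions determine the valuation semigroup and hence the body. Your explicit counting argument for why the $h_{t,k,\sigma}$ determine the valuation image is a welcome elaboration of a step the paper only asserts.
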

\begin{proof}
 For a fixed $k\in \mathbb{N}$ and $\sigma\in \mathbb{N}^d$, there is an open subset $U_{\sigma,k}$ such that $\mathcal{S}^{\geq (\sigma)}_k$ is flat over $U_{\sigma,k}$ due to the theorem of generic flatness. Furthermore, by the the semicontinuity theorem, we can shrink $U_{\sigma,k}$ even more and have for all $t\in U_{\sigma,k}$ that the dimension of
\begin{align}
h^0(X,(\mathcal{S}_{T,k}^{\geq(\sigma)})_t)
\end{align}
is independent from $t$.
For a very general $t\in \bigcap_{k,\sigma} U_{\sigma,k}$ the constancy of the above dimension holds for every $k$ and $\sigma$.
Furthermore for $t\in \bigcap_{\sigma,k} U_{\sigma,k} \cap \bigcap_{\sigma,k} V_{\sigma,k}$, we have that
$h^0(X,(\mathcal{S}_{T,k}^{\geq(\sigma)})_t)=h^0(X,\mathcal{S}_{t,k}^{\geq(\sigma)})$ are indepent from $t$.
But for a fixed $t\in T$ the dimensions of all the $H^0(X,\mathcal{S}_{t,k}^{\geq (\sigma)})$ completely determine the valuation points of $\Delta_{Y_{t,\bullet}}(S_{t,\bullet})$ and thus also the body $\Delta_{Y_{t,\bullet}}(S_{t,\bullet})$
 From this observation it follows that for a very general $t$ all Newton-Okounkov bodies coincide.
\end{proof}

\subsection{Examples of generic Newton-Okounkov bodies}

In this paragraph we want to construct some admissible families of Newton-Okounkov bodies over $T$, in order to illustrate how to make use of Theorem \ref{thmgenNO} to get generic Newton-Okounkov bodies. We will give three construction how to realize this. 
The first two examples are families where we just vary the flag $Y_\bullet$. In the third construction we will also vary the varieties $X_t$ and the graded linear series $S_{t,\bullet}$ in a family. For the sake of simplicity, we assume in this paragraph that alle the varieties $X$ occuring are smooth.

\subsubsection{Variation of the flag}

Let $S_\bullet$ be a birational graded linear series in $X$ corresponding to a divisor $D$ and let $T$ be an irreducible (not necessarily projective) variety.
Since $S_\bullet$ is birational, we can consider the sheafification $\mathcal{S}_\bullet$ of $S_\bullet$. By Corollary \ref{sheafvol}, we may replace $S_\bullet$ by $\tilde{S}_\bullet$ and may therefore assume that $S_\bullet$ is induced by the family of sheaves $\mathcal{S}_\bullet$.
Consider the variety $X_T:=X\times_{\mathbb{C}}T$ and the projection
\begin{align}
\pi_{X}\colon X_T\to X.
\end{align}
We define the family of sheaves $\mathcal{S}_{T,\bullet}$ as the pullback of $\mathcal{S}_\bullet$ under the projection $\pi_X$.
Since $\pi_X$ is flat, we can use Lemma \ref{lempullgrad} to see that  the family of sheaves $\mathcal{S}_{T,\bullet}$ defines a graded linear series on $X_T$ and that the sheaves $\mathcal{S}_{T,k}$ are subsheaves of $\struc_{X_T}(k\pi_X^*D)$.
 In order to get a family of Newton-Okounkov bodies, it remains to choose an admissible flag $\mathcal{Y}_\bullet$ of $X_T$.
So let us suppose we have fixed such a flag $\mathcal{Y}_\bullet$. For a point $t\in T$, the fiber over $t$ of $X_T$ induces an isomorphism
$X_t\cong X$. It is not hard to see that the following composition of morphism is an isomorphism
\begin{align}
X\cong X_t \hookrightarrow X\times_{\mathbb{C}}T\xrightarrow{\pi_X} X
\end{align}
where the first map is the natural inclusion of the fiber.
This shows that $\mathcal{S}_{t,\bullet}\cong \mathcal{S}_\bullet$ and hence  $(X_T,\mathcal{S}_{T,\bullet},\mathcal{Y}_\bullet)$ defines an admissible family of Newton-Okounkov bodies.

\begin{ex}[Variation of the point $Y_d$]
Let us suppose we have a birational graded linear series $S_\bullet$ on a smooth projective variety $X$ and a partial admissible flag of smooth subvarieties
\begin{align}
Y_1\supseteq \dots \supseteq Y_{d-1}
\end{align}
fixed.
Then set $T:=Y_{d-1}$ and consider  the variety $ X_T:=X\times_{\mathbb{C}} T$ and the partial flag $\mathcal{Y}_\bullet$ defined by $\mathcal{Y}_i:= Y_i\times_\mathbb{C}Y_{d-1}$ for $i=1,\dots, d-1$ and $\mathcal{Y}_d:=Y_{d-1}$ which is embedded in $\mathcal{Y}_{d-1}=Y_{d-1}\times_{\mathbb{C}} Y_{d-1}$ via the diagonal embedding.
Then for each $x\in T=Y_{d-1}$, the flag $Y_{x,\bullet}$ is just the partial flag $Y_1,\dots, Y_{d-1}$ with the additional component $Y_{x,d}=\{x\}$. Hence, Theorem \ref{thmgenNO} implies that for a very general point $x$ in $Y_{d-1}$
the Newton-Okounkov bodies $\Delta_{\{Y_1\supseteq \dots \supseteq \{x\}\}}(S_\bullet)$ all coincide.

We will now show that for  the special case of a surface $X$ and a finitely generated birational graded linear series this result can be established in a more direct way and also holds for  a general choice of points of the flag.
Let $X$ be a smooth surface, $D$ a big divisor on $X$ and $C$ a smooth curve. Then for each $x\in C$ we obtain an admissible flag $X\supseteq C\supseteq \{x\}$. We have the following description of the Newton-Okounkov body on surfaces (see also \ref{oksurf}):
\begin{align} 
\Delta_{C\supset\{x\} }(D)= \{ (t,y)\in \mathbb{R}^2: a\leq t\leq \mu, \quad \text{and } \alpha(t)\leq y \leq \beta(t) \}.
\end{align}
Without loss of generality we may replace $D$ by $D-aC$ and assume that $C$ is not contained in the support of the negative part of $D$. Then $\alpha(t)=\operatorname{ord}_x (N_t|C)$ and $\beta(t)=\operatorname{ord}_x(N_t|C)+(C\cdot P_t)$, where $D_t:=D-tC= P_t+N_t$ is the Zariski decomposition.
However, it is an easy consequence of \cite[Proposition 2.1]{KLM12}  that, the support of  all $N_t$ is contained in a finite union of closed subvarieties. Hence, we can choose a general point $x\in C$, such that $x\not \in \operatorname{supp} (N_t)$ for each $t\in [0,\mu]$. Then $\operatorname{ord}_x(N_t|C)=0$ and $\alpha(t)$ as well as $\beta(t)$ do not depend on the general point $x\in C$. This shows that for a general choice of $x\in C$, the Newton-Okounkov body $\Delta_{\{C\supseteq \{x\}\}}(D)$ is independent from $x$.

Now let $S_\bullet$ be a finitely generated birational graded linear series on $X$. Without loss of generality we may assume that it is finitely generated in $S_1$.
Let $\pi\colon X^\prime \to X$ be the blow-up of $X$ along $\bsi_{S_1}$ and let $\tilde{C}$ be the strict transform of $C$. Without loss of generality, we may assume that $X^\prime$ and $\tilde{C}$ are smooth. If this does not hold we can pass to a resolution of singularities, without changing the Newton-Okounkov body. 
By Theorem \ref{thmgradvollb}, we have for all $x\in \tilde{C}\setminus \bs(\pi^*S_\bullet)$:
\begin{align}
\Delta_{\{\tilde{C}\supseteq \{x\}\}}(\pi^*D-E)=\Delta_{\{C\supseteq\{x\}\}}(S_\bullet).
\end{align}
But the above discussion shows that the left hand side does not depend on $\tilde{x}$ for a general choice. Hence, also the right hand side does not.  

\end{ex}

\begin{ex}[Flags of complete intersection of very ample divisors]
In this example we want to consider flags which are defined by complete intersections corresponding to global sections of a fixed very ample divisor. We will see that the family of such flags induces an admissible flag. Thus we can define a generic Newton-Okounkov body corresponding to a birational graded linear series $S_\bullet$ on $X$, which just depends on the choice of a very ample divisor $A$. 

Consider the variety $S^\prime:= \mathbb{P}(H^0(X,\struc_X(A)))^{d-1}$. By Bertini's Theorem, there is an open subvariety $S\subset S^\prime$ such that for all $([s_1],\dots,[s_{d-1}])\in S^\prime$, the variety cut out by the $s_1,\dots, s_i$ 
\begin{align}
Y_{i}=\{x\in X \ | \ s_1(x)=\dots=s_i(x)=0\}
\end{align}
 for $i=1,\dots, d-1$ are smooth of codimension $i$ in $X$.
Consider the variety
\begin{align}
T:=\{(x,s_1,\dots,s_{d-1})\in X\times_{\mathbb{C}} S \ | \ s_1(x)=\dots = s_{d-1}(x)=0\}. 
\end{align}
as our parameter space,  as well as $X_T=X\times_{\mathbb{C}} T$ as our total space.
Note that $T$ is irreducible since, it surjects into $S$ which is irreducible and the fibers $T_s$ are irreducible curves for each $s\in S$.

Then we can define the partial flag $\mathcal{Y}_\bullet$ by setting
\begin{align}
\mathcal{Y}_i:=\{(x,y,[s_1],\dots, [s_{d-1}])\in X_T\subseteq X\times_\mathbb{C} X\times_\mathbb{C} S \ | \ s_1(x)=\dots = s_{i}(x)=0\}.
\end{align}
From the construction  it  follows that for each $t=(y,[s_1],\dots,[s_{d-1}])\in T$, the induced flag  $Y_{t,\bullet}$ consists of  the smooth varieties $Y_{t,i}$ defined above for $i=1,\dots d-1$ and $Y_{t,d}=\{y\}$.
Now, we want to show that the $\mathcal{Y}_i$ are Cartier divisors in $\mathcal{Y}_{i-1}$. We may without loss of generality replace the variety $T$ by an open subset $U\subseteq T$. Then we can assume that $T$ is smooth and all the $\mathcal{Y}_i$ are flat over $T$. Since all the fiber $Y_{i,t}$ for $t\in T$ are smooth, we can use 
\cite[Proposition 14.57]{GW} to deduce that $\mathcal{Y}_i$ is smooth as well. Hence, all the $\mathcal{Y}_i$ can be considered as Cartier divisors in $\mathcal{Y}_{i-1}$.

We have shown that $(X_T,S_{T,\bullet},\mathcal{Y}_\bullet)$ is an admissible family of Newton-Okounkov bodies and can therefore use Theorem \ref{thmgenNO} to get a generic Newton-Okounkov body $\Delta_{A}(S_\bullet)$ corresponding to the very ample line bundle $A$ and the birational graded linear series $S_\bullet$.
\end{ex}
\subsubsection{Infinitesimal Newton-Okounkov body}
Finally we do not just want to vary the flag $Y_\bullet$ but also the variety $X$ by considering blow-ups at various points on a variety. So let us fix a birational graded linear series $S_\bullet$ on a smooth variety $X$. Then if we choose a point $x\in X$, we denote by $X_x$ the blow-up of $X$ at $x$. Let $E=\mathbb{P}(T_xX)$ be the exceptional divisor and $\pi \colon X_x \to X$ the corresponding blow-up morphism. Then for each choice of flags of vector spaces
\begin{align}
T_xX= V_0\supseteq V_1,\supseteq \dots \supseteq V_{d-1}\supseteq \{0\}
\end{align}
we get an induced linear flag  $\mathbb{P}(V_{\bullet})$ defined by
\begin{align}
\mathbb{P}(T_x X)=E= \mathbb{P}(V_0)\supseteq \mathbb{P}(V_1),\supseteq \dots \supseteq \mathbb{P}(V_{d-1})\supseteq \{pt\}
\end{align}
 on $X_x$ starting with $E$. Hence, we  can define \begin{align}
 \Delta_{F(x,V_\bullet)}:=\Delta_{V_\bullet}(\pi^*S_\bullet),
\end{align} which we call an infinitesimal Newton-Okounkov body (see also \cite[Section 5.2]{LM09} ).  We want to see that this construction varies in an admissible family of Newton-Okounkov bodies.
The following lemma is a first step. 
\begin{lem}
Let $X$ be a smooth projective variety.
There is a smooth projective variety $B$ and a projection $p\colon B\to X$ such that for each $x\in X$ the fiber $B_x$ is isomorphic to $X_x$ which is the blow-up of $X$ at the point $x$.
\end{lem}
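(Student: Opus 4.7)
The plan is to take $B:=\operatorname{Bl}_{\Delta}(X\times X)$, the blow-up of the self-product of $X$ along the diagonal $\Delta\subseteq X\times X$, and to set $p:=\operatorname{pr}_2\circ \pi\colon B\to X$, where $\pi\colon B\to X\times X$ is the blow-up morphism and $\operatorname{pr}_2\colon X\times X\to X$ is the second projection. That $B$ is smooth and projective is immediate: $X\times X$ is smooth projective, the diagonal $\Delta$ is a smooth closed subvariety of $X\times X$ isomorphic to $X$, the blow-up of a smooth variety along a smooth closed subvariety is again smooth, and $\pi$ is a projective morphism.

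The heart of the argument is the identification of the fibers of $p$. For $x\in X$ one has
\[
p^{-1}(x)=\pi^{-1}(\operatorname{pr}_2^{-1}(x))=B\times_{X\times X}(X\times\{x\}),
\]
and the scheme-theoretic intersection of $\Delta$ with the fiber $X\times\{x\}\cong X$ is the single reduced point $(x,x)$. Since $X$ is smooth, $\Delta$ is locally at $(x,x)$ cut out in $X\times X$ by a regular sequence $f_1,\dots,f_d$ of length $d=\dim X$ (taking local parameters $t_1,\dots,t_d$ of $X$ at $x$, one may use $f_i=t_i\otimes 1-1\otimes t_i$), and the restrictions of the $f_i$ to $X\times\{x\}$ still form a regular sequence defining the point $(x,x)$ in $X\times\{x\}$. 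By the standard compatibility of blow-ups with base change in the presence of a regular embedding whose codimension is preserved, one concludes
\[
B\times_{X\times X}(X\times\{x\})\cong \operatorname{Bl}_{(x,x)}(X\times\{x\})\cong \operatorname{Bl}_x(X)=X_x,
\]
which is the desired identification of the fiber.

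The main technical obstacle is exactly this commutation of blow-up with the base change along the closed embedding $X\times\{x\}\hookrightarrow X\times X$, which is not flat, so that the naive flat-base-change statement does not apply directly. The matching of regular sequences above provides the Tor-independence required to push the compatibility through; this can be verified either by an explicit local calculation in the Rees algebra of the ideal of $\Delta$, or invoked from a standard reference on the functoriality of blow-ups.
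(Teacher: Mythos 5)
Your construction of $B$ is exactly the paper's: blow up $X\times X$ along the diagonal and compose with the second projection. Where you diverge is in the identification of the fibers, which is the actual content of the lemma. The paper builds the isomorphism $\operatorname{Bl}_{(x,x)}(X\times\{x\})\cong B\times_{X\times X}(X\times\{x\})$ by producing two mutually inverse morphisms from universal properties: one direction comes from the universal property of the fiber product, and the other from the universal property of the blow-up, the key geometric input being that the fiber $X\times\{x\}$ is not contained in the exceptional locus, so the exceptional Cartier divisor pulls back to an effective Cartier divisor on the fiber product. You instead reduce to the commutative-algebra statement that blow-up commutes with a non-flat base change once Tor-independence holds, and you verify the needed hypothesis concretely: the ideal of $\Delta$ is generated near $(x,x)$ by the regular sequence $t_i\otimes 1-1\otimes t_i$, whose restrictions to $X\times\{x\}$ are the local parameters $t_i$ and hence again a regular sequence of the same length cutting out the reduced point. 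This does close up: for an ideal generated by a regular sequence the Rees algebra is the symmetric algebra (the ideal is of linear type, presented by the Koszul relations $f_iT_j-f_jT_i$), and the same presentation computes the Rees algebra of the restricted ideal, so the two $\operatorname{Proj}$'s agree; equivalently, the total transform of $X\times\{x\}$ coincides with its strict transform. Your route is arguably cleaner in that it isolates the one hypothesis that can fail for a non-flat base change and checks it by a transparent local computation, whereas the paper's argument is purely categorical but has to chase the uniqueness clauses of both universal properties to see that the two maps are inverse to each other; the price you pay is an appeal to a base-change lemma for blow-ups that you leave as ``standard reference or local calculation'' rather than writing out.
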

\begin{proof}
Consider the diagonal closed embedding  $X\hookrightarrow X\times_\mathbb{C} X$. Let $\pi \colon B:=Bl_{X}(X\times_\mathbb{C}X)\to X\times_{\mathbb{C}}X$ be the blow-up of the closed variety $X$ inside $X\times_\mathbb{C} X$ with respect to the above embedding.
We consider $B$ as a family over $X$ by 
\begin{align}
B\xrightarrow{\pi} X\times_\mathbb{C}X\xrightarrow{p_2} X
\end{align}
where $p_2$ is the projection on the second factor.
Let $x\in X$ be a closed point. Then we  make the following abbreviations:
\begin{align}
\{x\}:=\spec k(x) \quad X\times_{\mathbb{C}}\{x\}:= X\times_\mathbb{C}X\times_{X}\spec k(x).
\end{align}
Now we have the following commutative diagram:
\begin{align}
\xymatrix{ 
&B\ar[r]^{\pi} &  X\times_\mathbb{C}X \ar[r]^-{p_2}& X \\
 &B\times_{X\times  X}( X\times_\mathbb{C} \{x\})\ar@/_/@{.>}[ld]_{q^\prime}\ar@{^{(}->}[u]^{p_1}\ar[r]^-{p_2}& X\times_\mathbb{C}\{x\} \ar@{^{(}->}[u]\ar[r]^-{p_2}& \{x\}\ar@{^{(}->}[u]\\
 Bl_{(x,x)}(X\times_\mathbb{C} \{x\})  \ar[ru]_{q}\ar@{^{(}->}@/^/[ruu]\ar@/_/[rru] &&& .} 
\end{align}
Note that the two rightmost boxes are Cartesian and the map $q$ is given by the universal property of fiber products.
We want to prove that the morphism $q$ is an isomorphism. We will do this by constructing the inverse map, using the universal property of the blow-up map
\begin{align}
Bl_{(x,x)}(X\times_\mathbb{C}\{x\})\to X\times_\mathbb{C}\{x\} .
\end{align}
This is constructed by showing that the inverse image of $(x,x)\in X\times_\mathbb{C} \{x\}$ under the map
\begin{align}
p_2\colon  B\times_{X\times  X}( X\times_\mathbb{C} \{x\}) \to X\times_\mathbb{C}\{x\}
\end{align}
is an effective Cartier divisor.  But this inverse image is the same as the the inverse image of the exceptional divisor $\pi^{-1}(X)\subset B$ under the map $p_1$. However, one can easily see that that the image of $p_1$ is not contained in the exceptional divisor $\pi^{-1}(X)$ and hence, we can pull back the Cartier divisor by just pulling back the local equation.
Consequently, we have defined a map $q^\prime$ which fits into the above commutative diagram. Now, by the universal property of the blow-up and the universal property of the fiber product in the middle box, we deduce that $q^\prime$ is an inverse map of $q$.

Thus, the fiber $B_x$ which is just $B\times_{X\times  X}( X\times_\mathbb{C} \{x\})$ is isomorphic to $Bl_{(x,x)}(X\times_\mathbb{C}\{x\})$
which can be interpreted as the blow-up $Bl_x(X)$ of $X$ in the point $\{x\}$.

\end{proof}
Let us denote by $N:=N_{X/X\times X}$ the normal bundle of the diagonal embedding, viewed as a vector bundle over $X$. The projectivization $\mathbb{P}(N)$ is the exceptional divisor of the blow-up $B$. Its fibers $\mathbb{P}(N)_x$ are isomorphic to the exceptional divisors $E_x$ of $X_x$.
Let $T:=Fl(N)\xrightarrow{p} X$ be the flag bundle of $N$ over $X$. By the splitting principle, there exists a filtration of vector bundles of $p^*N=N\times_X T$:
\begin{align}
p^*N=N\times_X T\supset \mathcal{V}_1\supset \dots \supset \mathcal{V}_{d-2} \supset \{0\}.
\end{align}
We can also consider the projectivized filtration:
\begin{align}
\mathbb{P}(N)\times_X T\supset \mathbb{P}(\mathcal{V}_1)\supset \dots \supset \mathbb{P}(\mathcal{V}_{d-2}).
\end{align} 
Now we define $X_T:=B\times_X T$, as well as $\mathcal{Y}_1:=\mathbb{P}(N)\times_X T$ which is a subset of codimension one in $X_T$ since $T\xrightarrow{p} X$ is flat. Furthermore, we define $\mathcal{Y}_i:= \mathbb{P}(\mathcal{V}_{i+1})$.
The so defined flag $\mathcal{Y}_\bullet$ is admissible and has the desired properties on the fibers over $t\in T$ since $\mathbb{P}(N)\times_X k(t)$ is isomorphic to the exceptional divisor of the blow-up of $X$ in $p(t)$.

In order to define a graded linear series induced by a graded series of sheaves, we need to shrink $T$ a bit more.
Let $S_\bullet$ be a graded linear series on $X$. Let $Z:=\bs(S_\bullet)$ be the corresponding base locus and define $U=X\setminus Z$. Then we consider the base change $T^\prime:=T\times_X U\xrightarrow{p^\prime} U$ which is an open subset of $T$ and define $X_{T^\prime}:=X_{T}\times_X T^\prime=B\times_X T\times_X U$.
Now consider the following composition of morphisms of varieties
\begin{align}
X_{T^\prime, t^\prime}\hookrightarrow X_{T^\prime}\to (B\times_X U)\to (X\times X)\times_X U\xrightarrow{p_1} X.
\end{align}
Let $\mathcal{S}_\bullet$ be the sheafification of $S_\bullet$. We can now use Lemma \ref{lempullgrad} to deduce that the pullback of $\mathcal{S}_\bullet$ to $X_{T^\prime}$, which we define as $\mathcal{S}_{T^\prime,\bullet}$, defines a graded linear series since it factors as the pullback of a flat morphism composed with a birational morphism which has the prescribed property of Lemma \ref{lempullgrad}, again composed with a flat morphism.
Furthermore, the composed map $X_{T^\prime,t^\prime}\to X$ is just the blow-up morphism of $X$ in $p^\prime(t)\in U$. Again we can use Lemma \ref{lempullgrad} to deduce that $\mathcal{S}_{t,\bullet}$ defines a graded linear series on $X_{T^\prime,t^\prime}$ and $\mathcal{S}_{t,k}\subseteq \struc_{X_x}(D_{|X_x})$. Corollary \ref{corsheafok}, then says that the Newton-Okounkov body of $\mathcal{S}_{t,\bullet}$ is the same as for the graded linear series $\pi^*S_{\bullet}$ which is given by pulling back the global sections.
We can also replace the flag $\mathcal{Y_\bullet}$ with the flat base change $\mathcal{Y}^\prime_\bullet:=(\mathcal{Y}\times_X U)_\bullet$.
Then  it follows from our discussion that $(X_{T^\prime},\mathcal{Y}^\prime_\bullet, \mathcal{S}_{T^\prime,\bullet})$ is an admissible family of Newton-Okounkov bodies over $T^\prime$.
Let us summarize what we have shown.
\begin{thm}
Let $X$ be a smooth variety and $S_\bullet$ be a birational graded linear series. Then for a very general choice of points $p\in X$ and a linear flag $V_\bullet$ starting with  $E_x:=\mathbb{P}(T_x(X))\cong \mathbb{P}^{d-1}$, the corresponding Newton-Okounkov bodies $\Delta_{F(x,V_\bullet)}(S_\bullet)$
all coincide.
\end{thm}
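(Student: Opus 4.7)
The plan is to package the construction sketched in the paragraphs immediately preceding the theorem into a single admissible family of Newton--Okounkov bodies in the sense of Definition \ref{deffam}, and then invoke Theorem \ref{thmgenNO}.

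First, I assemble the total space and flag. Set $B := \mathrm{Bl}_X(X \times_{\mathbb{C}} X) \to X$ (projection to the second factor); the preceding lemma shows that each fiber $B_x$ is the blow-up $X_x$, with exceptional divisor $\mathbb{P}(N)$, where $N$ is the normal bundle of the diagonal. Let $T := \mathrm{Fl}(N) \xrightarrow{p} X$ be the full flag bundle of $N$ with its tautological filtration $p^*N \supset \mathcal{V}_1 \supset \cdots \supset \mathcal{V}_{d-2} \supset \{0\}$. Define $X_T := B \times_X T$, $\mathcal{Y}_1 := \mathbb{P}(N) \times_X T$ and $\mathcal{Y}_i := \mathbb{P}(\mathcal{V}_{i-1})$ for $2 \leq i \leq d-1$; the fiber of the resulting flag over $t = (x, V_\bullet)$ is precisely $E_x \supset \mathbb{P}(V_1) \supset \cdots \supset \{\mathrm{pt}\}$ in $X_x$.

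To install the graded linear series, let $U := X \setminus \bs(S_\bullet)$, $T' := T \times_X U$ and $X_{T'} := X_T \times_X T'$; sheafify $S_\bullet$ via Section \ref{sectionsheaf} to obtain $\mathcal{S}_\bullet$, and pull it back along the composition
\begin{equation*}
X_{T'} \longrightarrow B \times_X U \longrightarrow (X \times X) \times_X U \xrightarrow{p_1} X.
\end{equation*}
This factors as a flat morphism, followed by a birational morphism whose exceptional locus projects into $U$, followed by a flat morphism, so Lemma \ref{lempullgrad} yields coherent subsheaves $\mathcal{S}_{T',k} \subseteq \struc_{X_{T'}}(k D_{T'})$ that define a graded linear series on $X_{T'}$. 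A second application of Lemma \ref{lempullgrad} to the analogous composition on each fiber shows that $\mathcal{S}_{t,\bullet}$ is a graded linear series on $X_x$, and Corollary \ref{corsheafok} identifies its Newton--Okounkov body with $\Delta_{V_\bullet}(\pi_x^* S_\bullet) = \Delta_{F(x, V_\bullet)}(S_\bullet)$. Shrinking $T'$ further by generic flatness, the smoothness of each fiber $\mathbb{P}(\mathcal{V}_i)_t$ combined with \cite[Proposition 14.57]{GW} ensures that each $\mathcal{Y}_{i+1}$ is Cartier in $\mathcal{Y}_i$.

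Thus $(X_{T'}, \mathcal{S}_{T',\bullet}, \mathcal{Y}'_\bullet)$ is an admissible family of Newton--Okounkov bodies, and Theorem \ref{thmgenNO} yields the coincidence of the bodies $\Delta_{F(x, V_\bullet)}(S_\bullet)$ on a very general subset of $T'$, which projects surjectively onto a very general locus of pairs $(x, V_\bullet)$. The main obstacle in executing this plan is ensuring that the pulled-back sheaves genuinely define a graded linear series on each fiber $X_x$: the blow-up $X_x \to X$ is birational but not flat, which is precisely what forces us to sheafify $S_\bullet$ and to restrict the base to $T'$ so that the image of the exceptional locus avoids $\bs(S_\bullet)$, making the birational half of Lemma \ref{lempullgrad} applicable.
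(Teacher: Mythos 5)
Your proposal is correct and follows essentially the same route as the paper, which presents this theorem precisely as a summary of the preceding construction: the blow-up family $B=\mathrm{Bl}_X(X\times_{\mathbb{C}}X)$ over $X$, the flag bundle $T=\mathrm{Fl}(N)$ with its tautological filtration, the restriction to $T'=T\times_X U$ with $U=X\setminus\bs(S_\bullet)$ so that the birational case of Lemma \ref{lempullgrad} applies to the sheafification $\mathcal{S}_\bullet$, and finally Corollary \ref{corsheafok} and Theorem \ref{thmgenNO}. Your closing remark correctly identifies the one genuine subtlety (non-flatness of the blow-up forcing the sheafification and the restriction to $T'$), and your indexing of the $\mathcal{Y}_i$ as $\mathbb{P}(\mathcal{V}_{i-1})$ is if anything more consistent than the paper's.
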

\qed

\end{document}